\newtheorem{theorem}{Theorem}[section]
\newtheorem{definition}{Definition}[section]
\newtheorem{proposition}{Proposition}[section]
\newtheorem{remark}{Remark}[section]
\newtheorem{corollary}{Corollary}[section]
\newtheorem{lemma}{Lemma}[section]
\DeclareMathOperator*{\diag}{diag}
\DeclareMathOperator*{\argmin}{arg\,min}
\providecommand{\keywords}[1]
{
  \small	
  \textbf{\textit{Keywords---}} #1
}
\title{Eigenvalues Distributions and Control Theory}
\author[1, 2, $\star$]{N. LAMSAHEL}
\author[1]{A. EL AKRI}
\author[1]{A. RATNANI}
\affil[1]{ {\scriptsize Vanguard Center, Mohammed VI Polytechnic University, Green City, Morocco}}
\affil[2]{ {\scriptsize  LMPA, Université du Littoral Cote d’Opale, 50 rue F. Buisson, 62228 Calais-Cedex, France}}
\affil[$\star$]{ {\scriptsize  email: noureddine.lamsahel@um6p.ma}}
\begin{document}

\maketitle
\tableofcontents

\begin{table}
\caption{The list of symbols and notations used in this paper}
\begin{tabularx}{\textwidth}{>{\centering\arraybackslash}p{0.15\textwidth}X}
\toprule
\textbf{Notation} & \textbf{Description}  \\
\midrule
  $N_j^p$ &  the $j$-th $B$-spline function of degree $p$; see \eqref{eq:definition-B-spline-1}-\eqref{eq:definition-B-spline-2} \\
  $\mathbb{S}^p$ & Schoenberg space of degree $p$; see \eqref{eq:Schoenberg-space} \\
  $\mathbb{S}_0^p$  & Schoenberg space of degree $p$ with vanishing boundary; see \eqref{eq:Schoenberg-Dirichlet-space} \\
  $\mathbf{C}_{[0,1]}$ & the space of admissible reparametrizations; see  \eqref{eq:reparametrization-space} \\
  $M_{\phi, n}^p$ & IgA mass matrix arising from the reparametrization $\phi$ of a uniform grid; see  \eqref{eq:mass-matrix-dif} \\
  $K_{\phi, n}^p$ & IgA stiffness matrix arising from the reparametrization $\phi$ of a uniform grid; see  \eqref{eq:stiffness-matrix-dif} \\
  $L_{\phi, n}^p$ & the matrix related to the IgA approximation of the Dirichlet-Laplace eigenvalue problem; $L_{\phi,n}^p=\left( M_{\phi,n}^p\right)^{-1} K_{\phi,n}^p$ \\
  $N$& the size of the matrix $L_{\phi, h}^p$, given by $N=n+p-2$\\
  $R_\omega$/$Rg(\omega)$ & the essential range of $\omega$/the range of $\omega$; see \eqref{eq:essential-range} \\
  $OUT(p,n)$ & the number of outliers \\
  $\mathcal{I}(p,n)$ & the set of indices of eigenvalues after removing the outliers:\\ &\hspace{2cm}$\mathcal{I}(p, n) = \left\{1, \cdots, N - OUT(p,n) \right\}$ \\
  $\mathcal{N}_p$ & the cardinal $B$-spline function of degree $p$; see \eqref{eq:cardinal-B-spline-1}-\eqref{eq:cardinal-B-spline-2} \\
  $\omega_{\phi}^p$ & the spectral symbol of $n^{-2} L^p_{\phi, n}$, see \eqref{eq:spectralsymbo-full-matrix} \\
  $\xi_\phi^p$ & the monotone rearrangement of $\omega_{\phi}^p$; see Definition \ref{def:monotone-rearrangement} \\
  $\widetilde{\delta}_{n}$ & the approximate discrete gap; see \eqref{eq:approximate-discrete-gap} \\
  $\delta_n^p$ &  the discrete gap; see \eqref{eq:discrete-gap} \\
\bottomrule
\end{tabularx}
\end{table}

\begin{abstract} 
This work deals with the isogeometric Galerkin discretization of the eigenvalue problem related to the Laplace operator subject to homogeneous Dirichlet boundary conditions on bounded intervals. This paper uses GLT theory to study the behavior of the gap of discrete spectra toward the uniform gap condition needed for the uniform boundary observability/controllability problems. The analysis refers to a regular $B$-spline basis and concave or convex reparametrizations. Under suitable assumptions on the reparametrization transformation, we prove that structure emerges within the distribution of the eigenvalues once we reframe the problem into  GLT-symbol analysis. We also demonstrate numerically, that the necessary average gap condition proposed in \cite{bianchi2018spectral} is not equivalent to the uniform gap condition. However, by improving the result in \cite{bianchi2021analysis} we construct sufficient criteria that guarantee the uniform gap property.
\end{abstract}

\noindent\keywords{Laplace operator, Isogeometric Galerkin discretization, Reparametrization, GLT theory, Uniform gap, Average gap, Eigenvalues, Spectral symbol. }

\section{Introduction}

\hspace{0.5cm} We consider the one-dimensional boundary controllability problem for the wave equation: given $T>2$ and initial data $(\varphi^0,\varphi^1) \in L^2(0,1) \times H^{-1}(0,1)$, the goal is to find a control function  $v \in L^2(0, T)$ such that the solution of the system 

\begin{equation}\label{pd0}
\begin{cases}
\dfrac{\partial^2 \varphi}{\partial t^2}(t,x)-\dfrac{\partial^2 \varphi}{\partial x^2}(t,x) =0, \quad & (t,x) \in (0,T) \times (0,1), \vspace*{0.25cm}\\
\varphi(t,0)=0, \quad \varphi(t,1)=v(t),  \quad & t \in (0,T), \vspace*{0.25cm} \\
\varphi(0,t)=\varphi^0(x)  , \quad \frac{\partial \varphi}{\partial t} (0,t)= \varphi^1(x),  \quad & x \in (0,1),
\end{cases}
\end{equation}
satisfies 
$$
\varphi(T,x) = \frac{\partial \varphi}{\partial t}(T,x) =0, \quad \text{a.e } x \in (0,1).
$$

Since the wave equation \eqref{pd0} is linear and time-reversible if system \eqref{pd0} is controllable at zero, we can also drive the system at time $T$ to any desired state: $\varphi(T, \cdot)=\varphi_T^0$ and $\varphi'(T, \cdot)=\varphi_T^1$, where $(\varphi_T^0, \varphi_T^1)\in L^2(0,1) \times H^{-1}(0,1)$.

There are several approaches to tackling this problem (see, for instance, \cite{lions1988controlabilite,russell1978controllability,tucsnak2009observation}). However, the Hilbert Uniqueness Method (HUM) stands out as the only systematic and algorithmic method \cite{glowinski1995exact,lions1988controlabilite}. This method is of particular significance due to its capability to construct controls with minimal $L^2$-norm, often referred to as HUM controls. This construction is based on the creation of an abstract Hilbert space, relying on the observability of the adjoint system. The numerical implementation of the method involves discretizing the continuous problem, and then considering the possible convergence of the discrete controls towards the HUM control of the continuous system. For problems of this nature, space discretization is the most relevant and generally determines whether the discrete controls converge or diverge. 

This discrete approach requires a meticulous analysis to comprehend how the concepts of controllability and observability behave during the discretization process. To be more precise, it is widely acknowledged that almost all classical spatial discretizations of \eqref{pd0} using uniform meshes result in the divergence of the discrete controls as the discretization parameter approaches zero. This is observed, for example, in the case of finite-differences and finite-elements semi-discretizations (see \cite{glowinski1995exact,infante1999boundary}).
The phenomenon arises due to the presence of high-frequency spurious solutions that propagate with arbitrarily small velocities, destroying the uniform observability of the discrete adjoint system, which leads to the lack of uniform controllability of the discrete approximations of the wave equation. Consequently, even with regular initial conditions, there are cases where any corresponding sequence of discrete controls becomes unbounded in the $L^2$-norm. Several techniques have been developed in the literature to overcome the aforementioned high frequencies problem \cite{castro2006boundary,ervedoza2016numerical,glowinski1995exact,infante1999boundary,micu2002uniform,micu2009uniform,munch2005uniformly,negreanu2004convergence}. For a comprehensive overview of the state of the art, readers can refer to the survey article   \cite{zuazua2005propagation} and the references therein.

When dealing with the $1-d$ uniform observability problem, the usual strategy involves the utilization of Fourier series expansions. In this context, Ingham-type inequalities 
reduce the problem to the uniform observability of the discrete eigenvectors and a uniform gap condition on the eigenvalues. While proving the uniform observability of discrete eigenvectors often involves direct computation employing a multiplier technique, studying the uniform gap problem is more challenging and requires a detailed description of the discrete eigenvalues, which is usually only feasible in specific cases, for instance, in the case when the wave equation \eqref{pd0} is discretized using the finite-differences \cite{infante1999boundary} or mixed finite element \cite{castro2006boundary} methods. 

This highlights the central importance of addressing the uniform gap problem in the development of numerical approximation schemes for control problems. In this context, the pioneering work of S. Ervedoza et al. \cite{ervedoza2016numerical} proposes a solution to restore the uniform observability property involving the utilization of non-uniform meshes. The main idea involves building a variety of non-uniform finite-differences and finite-elements semi-discretizations of the wave equation through the use of an appropriate concave diffeomorphism, what we call a reparametrization, which transforms the uniform mesh into a non-uniform one.  It is worth mentioning, however, that while the property of a uniform gap has been identified through numerical analysis, rigorous analytical proof remains an open problem.

To the best of our knowledge, the first theoretical investigation of the uniform gap condition in this particular direction is attributed to Bianchi et al. \cite{bianchi2018spectral}. The main focus of that paper was the analysis of what is called the average spectral gap concerning the one-dimensional Laplace operator on non-uniform meshes. It's worth noting here that the average spectral gap condition is a necessary condition for the uniform spectral gap. The analysis in \cite{bianchi2018spectral} applies to different approximation schemes, including finite differences, Lagrangian finite elements, and a scheme coming from the Isogeometric Analysis (IgA) approximation technique \cite{hughes2005isogeometric}. In particular, the study demonstrated that the mesh introduced in \cite{ervedoza2016numerical} effectively preserves the uniform gap condition in the average sense. The main tool employed in the paper was the Generalized Locally Toeplitz (GLT) symbolic calculus \cite{garoni2017generalized,garoni2017generalized2}. The main advantage of using the GLT theory, as indicated in \cite{bianchi2018spectral}, lies in its capacity to provide a general conceptual background to address the problem of uniform gaps within a broader context. Additionally, it serves as a unifying tool for the spectral analysis of various schemes (FDs, FEs, IgA...). 

Despite preserving the average gap property, achieving uniform observability requires a uniform gap condition. In the present paper, we address the problem of the uniform gap and, under specific and possible conditions on the spectra of the discrete system after an appropriate reparametrization, we demonstrate the uniform preservation of the eigenvalue gap. Our analysis focuses on the Galerkin IgA method utilizing regular $B$-splines of degree $p$ with global regularity $C^{p-1}$. However, it's important to note that analyzing the uniform gap in this context presents significant difficulties, primarily due to the lack of information on discrete eigenvalues; unlike the case of finite differences and $\mathbb{P}_1$ finite elements on uniform meshes. Specifically, the analysis in the works \cite{castro2006boundary,infante1999boundary} cannot be straightforwardly extended. Therefore,  a careful examination of the behavior of discrete eigenvalues under the mesh reparametrization is necessary.

The authors' decision to study the average gap rather than the needed uniform gap property in \cite{bianchi2018spectral} is driven by a fundamental challenge. This challenge arises from the difficulty to construct a reparametrization that enables an approximation of  the square root of the eigenvalues up to a significant order, at least $o(h)$, where $h$ represents the discretization step when using uniform sampling of the symbol. Consequently, a thorough understanding of the connection between the set of reparametrizations and the eigenvalues is needed to tackle this problem effectively. In this context, and based on the results in \cite{bianchi2021analysis}, we have illustrated the potential to gain valuable and previously undiscovered insights into the relationship in question. Subsequently, as part of our effort to establish a sufficient condition for the uniform gap property, we have exhibited more favorable outcomes compared to those in \cite{bianchi2021analysis}, particularly when a set of reparametrizations is considered. Combining these enhanced results with our analysis of the distribution of the eigenvalues and supplementing them with numerical observations, we build a  feasible criterion on the discrete spectra to achieve the uniform gap property.

The main contributions of the paper are:

\begin{enumerate}
\item[(1)]  analysis of the distribution of the eigenvalues under the reparametrization, proving that the ordering of eigenvalues is precisely the ordering of the symbols (Theorem \ref{tdis1}). Additionally, we examine  how  the convexity of the symbol influences the  distribution of packed eigenvalues (Proposition \ref{cl2.5}). 
\item[(2)] we demonstrate a uniform version of the discrete Weyl's law, presented in Proposition \ref{gedis} and a more accurate approximation of the eigenvalues using the symbol in Corollary \ref{cl2}.
\item[(3)] examines the gap between consecutive square roots of eigenvalues, proving a uniform approximate gap condition when dealing with linear $B$-splines (Corollary \ref{lip=1}). In the more general case, we establish a uniform gap condition, provided that the sequence $(m(n)){n \geq 1}$ remains bounded (Theorem \ref{cl2.4}). Here, $m(n)$ is defined as 
$$
m(n) = \min \argmin_{1 \leq k \leq N-1} \left( \sqrt{\lambda_{k+1,h}}-\sqrt{\lambda_{k,h}} \right).
$$

\item[(4)] development of a numerical study, where the main objective is to validate our theoretical results and to construct specific examples illustrating the application of the hypothesis $(m(n))$ bounded. A summary of our numerical results is given in Table \ref{f23-f24}.
\end{enumerate}

In Section \ref{sec:preliminaries}, we introduce the notations, definitions, and preliminary results relevant to our analysis. Specifically, we start by defining the  $B$-spline functions and briefly drive the Galerkin Isogeometric Analysis (IgA) discretization for the eigenvalue problem associated with the wave equation \eqref{pd0}.  In the second subsection\ref{sec:preliminariesGLT} of this section, we provide an overview of the essential results of the abstract Generalized Locally Toeplitz (GLT) theory. At the end of this  Section, we provide the GLT symbol for our particular discretization. In Section  \ref{disIGA}, we use the GLT theory to analyze the impact of reparametrization as well as the convexity of the IgA symbol on the behavior of the discrete eigenvalues. We also demonstrate an improvement over the results in  \cite{bianchi2021analysis}, particularly showing a uniform discrete Weyl's law result. This result illustrates how simple convergence transforms into uniform convergence through a carefully selected reparametrization. Section \ref{unifgap} tackles the problem of the uniform gap. Under suitable conditions on the discrete spectra and with a well-selected reparametrization function, we demonstrate the preservation of the uniform gap. In Section \ref{sec:numerical-tests}, we provide several numerical tests to illustrate our theoretical results. Our primary objective in these numerical investigations is to analyze the equivalence between the average and strong gap conditions. Furthermore, we demonstrate the possibility of constructing reparametrizations that ensure the hypothesis of our theoretical result concerning the uniform gap. Finally, Section \ref{sec:conclusions} concludes the paper, summarizing the main results and providing some possible perspectives.

\section{Preliminaries}\label{sec:preliminaries}
\hspace{0.5cm}Consider the following one-dimensional Laplace eigenvalue problem with homogeneous Dirichlet conditions
\begin{equation}\label{pd1}
\left\{
\begin{array}{ll}
 -\partial_{xx}u=\lambda u,\quad x\in (0,1),\\
 u(0)=u(1)=0.
\end{array}
\right.
\end{equation}
It is well-established that the system (\ref{pd1}) possesses a family of exact non-trivial solutions given by $\lambda_k = \left( k\pi\right)^2$ $u_k = \sin({k\pi \cdot})$, for $k \in \mathbb{N}^*$; $\lambda_k$ is the $k$-th eigenvalue of the operator $-\partial_{xx}$ with Dirichlet boundary conditions, and $u_k$ corresponds to the associated eigenvector.  We observe that the family $\left\{ \frac{\sqrt{2}}{2} u_k \right\} $  forms an orthonormal basis of $L^{2}(0,1)$. Additionally, the eigenvalues $\lambda_k$ satisfy the uniform gap condition
\begin{equation}\label{eq:cont-gap-cond}
\sqrt{\lambda_{k+1}} - \sqrt{\lambda_{k}} = \pi, \quad \forall k\in\mathbb{N}^{*}.
\end{equation}
This uniform gap condition \eqref{eq:cont-gap-cond}, using Ingham’s inequality \cite{InghamA.Eartic}, ensures the boundary observability of the wave system \eqref{pd0} with a minimal time $T > 2$.

This paper deals with the analysis of the uniform gap property and the distribution of the IgA approximation of the eigenvalue problem \eqref{pd1}. The essential preliminary materials for our analysis are revisited in this section, starting with the IgA Galerkin discretization of \eqref{pd1} in Subsection \ref{sec:descritization}, followed by a short overview of the GLT theory in Subsection \eqref{sec:preliminariesGLT}.

\subsection{Galerkin B-spline IgA Discretization}\label{sec:descritization}
$\hspace{0.5cm}$In this part, we introduce an isogeometric Galerkin-based approach to discretize   the system \eqref{pd1}, employing $B$-spline functions \cite{de1978practical}. The method involves discretizing the weak form of problem \eqref{pd1} stated as follows: for $k \geq 1$, finds $u_k \in H^1_0(0,1)$ and $\lambda_k \in \mathbb{R}_+$ such that 
\begin{equation}\label{pd2}
A(u_k,v)=\lambda_k \;L(u_k,v),\quad \forall v \in H^1_0(0,1),
\end{equation}
where 
$$ A(u_k,v)=\displaystyle\int_0^1u_k'(x)\;v'(x) dx,\;\;\text{and}\;\; L(v)=\displaystyle\int_0^1 u_k(x)\;v(x)dx. $$
The next step involves constructing a finite-dimensional subspace that approximates the solution space $H^1_0(0,1)$. This subspace is determined by a finite set of basis functions. In the standard IgA discretization, these functions are constructed using $B$-spline functions.

We  consider non-periodic and uniform knot vectors of the form
$$
(t_j)_{0\leq j \leq 2p+n}=\left( \displaystyle\underbrace{0\cdots0}_{p+1},\dfrac{1}{n},\dfrac{2}{n},\cdots,\dfrac{n-1}{n},\displaystyle\underbrace{1\cdots1}_{p+1}   \right),
$$
where $n, p \in N^*$. The $B$-spline  functions of degree $p$ on these knots are defined recursively as follows (for instance,  see \cite{de1978practical})
\begin{equation}\label{eq:definition-B-spline-1}
N_j^p(t)=\dfrac{t-t_j}{t_{j+p}-t_{j}}N_j^{p-1}(t) +  \dfrac{t_{j+p+1}-t}{t_{j+p+1}-t_{j+1}}N_{j+1}^{p-1}(t),\quad\text{for}\;\;  0\leq j\leq p+n-1,  
\end{equation}
with
\begin{equation}\label{eq:definition-B-spline-2}
N_j^0(t)=\mathcal{X}_{[t_j,t_{j+1})}(t),\quad\text{for}\;\;0\leq j\leq p+n-1.
\end{equation}
Here, $p+n$ is the number of $B$-spline functions, and in cases where a fraction has a zero denominator, we assume it to be zero. We can then define the Schoenberg space
\begin{equation}\label{eq:Schoenberg-space}
\mathbb{S}^{p} = span \left\{ N_j^p: \; j=0, \dots,p+n-1 \right\}.
\end{equation}
In classical spline approximation theory (check \cite{de1978practical}), it is well  known that  $\mathbb{S}^{p}$ coincides with the space of splines of degree $p$ and smoothness $p-1$, namely
$$
\mathbb{S}^{p} = \left\{ s\in\mathcal{C}^{p-1}([0,1]\,\,s|_{\left[ i/n, \, (i+1)/n \right)}\in \mathbb{P}_{p},\;\; i=0,\dots,n-1 \right\},
$$ 
where $\mathbb{P}_{p}$ denotes the space of polynomials of degree at most $p$. Additionally, the B-spline basis has the following properties:
\begin{itemize}
\item Local support property:
$$
\text{supp}(N_j^p)=[t_j,t_{j+p+1}],\quad j=0,\dots,n+p-1.
$$

\item Vanishment on the boundary:
$$
N_j^p(0)=N_j^p(1)=0,\quad j=1,\dots,n+p-2.
$$

\item Nonnegative partition of the unity:
$$
N_j^p(x)\geq 0,\quad \sum_{j=0}^{n+p-1} N_j^p(x)=1, \quad x\in[0,1],\quad j=0,\dots,n+p-1.
$$
\end{itemize}
The isogeometric approximation of $H^1_0([0,1])$ is given by 
\begin{equation}\label{eq:Schoenberg-Dirichlet-space}
\mathbb{S}^{p}_0 = \left\{ s\in\mathcal{C}^{p}([0,1]\,\,s|_{\left[ i/n, \, (i+1)/n \right)}\in \mathbb{P}_{p},\;\;\text{and}\;\;s(0)=s(1)=0\;\; i=0,\dots,n-1 \right\}.
\end{equation} 
Then, our discrete solutions  $(u_{k,h}, \lambda_{k,h}) \in \mathbb{S}^{p}_0 \times \mathbb{R}_+$ satisfy the approximate weak formulation
\begin{equation}\label{eq:discrete-weak-form}
A(u_{k,h},v_h)=\lambda_{k,h} \;L(u_{k,h},v_h),\quad \forall v_h \in \mathbb{S}^{p}_0,
\end{equation}
where $h$ refers to the discretization parameter defined by $h=1/n$. We use the standard basis for $\mathbb{S}^{p}_0$ formed by the $B$-spline functions $\{N_{1}^p, \cdots, N_{p+n-2}^p\}$ that vanish at the boundary. 
Equation \eqref{eq:discrete-weak-form} can be expressed as a finite-dimensional eigenvalues problem
\begin{equation*}
\left[\left( M_n^p\right)^{-1} K_n^p \right] \mathbf{u}_{k,h} = \lambda_{k,h} \mathbf{u}_{k,h},
\end{equation*}
where $\mathbf{u}_{k,h}$ is the coefficients vector of $u_{k,h}$ with respect to the  basis $\{N_{1}^p, \cdots, N_{p+n-2}^p\}$ and $M_n^p$ and $K_n^p$ are the  mass and stiffness matrices, respectively 
\begin{equation*}
(M_n^p)_{i,j}= \int_0^1 N_i^p(x) N_j^p(x) dx, \quad (K_n^p)_{i,j} = \int_0^1 (N_i^p)'(x) (N_j^p)'(x) dx, \quad \text{for } 1 \leq i,j \leq p+n-2.
\end{equation*} 
$\hspace{0.5cm}$As mentioned in the introduction, it has been noted that uniform grids do not always ensure a uniform gap condition. However, Bianchi and Serra-Capizzano, in their work \cite{bianchi2018spectral}, demonstrated that by employing a convex or concave reparametrization of the domain $[0,1]$, it is possible to ensure the preservation of the average gap property. This average gap is a necessary condition for the needed uniform gap property.\\
In our paper, our primary focus is to analyze the distribution of eigenvalues in cases where the average gap condition is satisfied. Additionally, we endeavor to formulate a sufficient condition that guarantees the fulfillment of the uniform gap property. Following the ideas of \cite{bianchi2018spectral}, we introduce the space
\begin{equation}\label{eq:reparametrization-space}
\mathbf{C}_{[0,1]}=\left\{ \phi\in\mathcal{C}^2([0,1]):\; \phi^{'}>0,\;[\phi^{''}>0 \text{ or }\phi^{''}<0],\; \phi(0)=0 \text{ and } \phi(1)=1       \right\}.
\end{equation}
For $\phi\in \mathbf{C}_{[0,1]} $, we define the following basis functions by pullback  under transformation $\phi$
$$
B_j^p = N_j^p \circ \phi^{-1}, \quad \text{for } 1 \leq j \leq p+n-2,
$$
and aim to approximate the exact eigenpairs $(u_k, \lambda_k)$ using the standard Galerkin method, where the discrete solution space is given by $span \{B_1^p, \cdots, B_{p+n-2}^p \}$. Simple computations lead to the following expressions for the mass and stiffness matrices
\begin{equation}\label{eq:mass-matrix-dif}
(M_{\phi, n}^p)_{i,j}= \int_0^1 \left| \phi'(x) \right| N_i^p(x) N_j^p(x) dx, \quad \text{for } 1 \leq i,j \leq p+n-2.
\end{equation}
\begin{equation}\label{eq:stiffness-matrix-dif}
(K_{\phi,n}^p)_{i,j} = \int_0^1 \frac{1}{\left| \phi'(x) \right|} (N_i^p)'(x) (N_j^p)'(x)dx, \quad \text{for } 1 \leq i,j \leq p+n-2.
\end{equation}
The numerical eigenvalue problem in this case is described by:
\begin{equation*}
L_{\phi,n}^p \mathbf{u}_{k,h} = \lambda_{k,h} \mathbf{u}_{k,h},
\end{equation*}
where $L_{\phi,n}^p=\left( M_{\phi,n}^p\right)^{-1} K_{\phi,n}^p.$

\subsection{Preliminaries on GLT Sequences}\label{sec:preliminariesGLT}
$\hspace{0.5cm}$This subsection provides a brief overview of the essential background on the Generalized Locally Toeplitz (GLT) sequences theory. More details can be found in the pioneering works \cite{bianchi2021analysis,garoni2017generalized}, and the references therein. 
In what follows, $(L_{n})_{n\in\mathbb{N}^{*}}$ represents a sequence of matrices of   size denoted as $N=N(n)$, with $N \rightarrow +\infty$  as $n \rightarrow +\infty$.

In this subsection, all the definitions and results have been adjusted to better align with our particular context, including the following definition of the spectral symbol.
\begin{definition}[Spectral symbol]
Let  $\mathcal{C}_c(\mathbb{R})$ be the set of continuous functions with compact support over $\mathbb{R}$, and let $\omega :[0,1]\times[0,\pi] \longrightarrow \mathbb{R}$ be a measurable function. We say that $(L_{n})_{n\in\mathbb{N}^{*}}$ has a spectral (or eigenvalue) distribution described by $\omega$, and we write
$$
(L_{n})_{n\in\mathbb{N}^{*}}\sim_{\lambda} \omega,
$$
if for all $F \in \mathcal{C}_c(\mathbb{R})$ we have
$$
\lim_{N\longrightarrow + \infty}\dfrac{1}{N}\displaystyle\sum_{k=1}^N F(\lambda_k(L_n))=\dfrac{1}{\pi}\displaystyle\int_{[0,1]\times[0,\pi]} F(w(x,\theta)) \,dx\,d\theta,
$$
where $\lambda_k(L_n)$, $k=1, \cdots, N$ are the eigenvalues of $L_n$. In this case, $\omega$ is referred to as the spectral symbol of $(L_{n})_{n\in\mathbb{N}^*}$.
\end{definition}
With the chosen discretization in the Subsection \ref{sec:descritization}, whether or not reparametrization is employed, an issue arises concerning outliers. A few eigenvalues are poorly approximated, and their corresponding values are notably larger than the exact values. In scenarios where the reparametrization is considered to be the identity, it is observed in \cite{hiemstra2021removal} that the number of these outliers amounts to $2\left\lfloor\frac{p-1}{2}\right\rfloor$. The following definition gives the mathematical definition of these outliers.
\begin{definition}[Outliers]
Let $(L_{n})_{n\in\mathbb{N}^*}\sim_{\lambda} \omega$, and let $R_\omega$ represent the essential range of $\omega$, defined as 
\begin{equation}\label{eq:essential-range}
R_{\omega} = \left\{ y \in \mathbb{R}:\; \mu_{2}\left(\{(x,\theta)\in [0,1]\times[0,\pi],\;\; |\omega(x,\theta)-y|<\epsilon      \}\right)> 0, \; \forall \epsilon>0 \right\},
\end{equation}
where $\mu_{2}$ denotes the Lebesgue measure on $\mathbb{R}^{2}$. Let $\lambda_k(L_n)$, $k=1, \cdots, N$, be the eigenvalues of $L_n$. An eigenvalue $\lambda_k(L_n)$ is considered an outlier if $\lambda_k(L_n) \notin  R_{\omega}$.
\end{definition}
It can be proved that  $ R_{\omega}$ is a closed set [ Lemma 2.1, \cite{garoni2017generalized}]. Furthermore,  if the function $\omega$ is continuous and because the domain $[0,1]\times[0,\pi]$ is compact, we can demonstrate that the essential range of $\omega$ coincides with the image of $\omega$.\\

In general, and specifically in our case, the symbol is defined on a multidimensional domain, which complicates the mathematical and numerical study of the distribution of the eigenvalues. However, we can drive a new one-dimensional symbol from the original symbol, as explained in the following
\begin{definition}[Monotone rearrangement of the symbol]\label{def:monotone-rearrangement}
Let $\omega: [0,1] \times [0,\pi] \longrightarrow \mathbb{R}$ be a measurable function such that $(L_n)_{n\in\mathbb{N}^*} \sim_{\lambda} \omega$. We assume that the essential range of $\omega$ is bounded. The extension function $\xi: [0,1] \longrightarrow R_{\omega}$ defined by
$$
\xi(x)=\inf\left\{ y\in  R_{\omega}:\;\; \Psi(y)>\pi  x  \right\},
\quad \forall x\in(0,1),$$
where 
\begin{equation}\label{eq:function-psi}
\Psi(y)=\mu_{2}(\{  (x,\theta)\in [0,1]\times[0,\pi],\;\; \omega(x,\theta)\leq y    \}),\quad \forall y\in \mathbb{R},
\end{equation}
is called the monotone rearrangement of $\omega$.
\end{definition}

The following result \cite{bianchi2021analysis} shows that the monotone rearrangement of a symbol remains a spectral symbol for the same sequence of matrices.

\begin{proposition}
Let $(L_n)_{n\in\mathbb{N}^*} \sim_{\lambda} \omega$ with $\omega: [0,1] \times [0,\pi] \longrightarrow \mathbb{R}$ has a bounded essential range. Let $\xi$ be the monotone rearrangement of $\omega$. Then, we have
$$ 
(L_{n})_{n\in\mathbb{N}^{*}}\sim_{\lambda}\xi.
$$
\end{proposition}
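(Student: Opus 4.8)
The plan is to prove that the monotone rearrangement $\xi$ is \emph{equimeasurable} with $\omega$, i.e. that for every $F\in\mathcal{C}_c(\mathbb{R})$
$$
\int_0^1 F\bigl(\xi(x)\bigr)\,dx \;=\; \frac{1}{\pi}\int_{[0,1]\times[0,\pi]} F\bigl(\omega(x,\theta)\bigr)\,dx\,d\theta .
$$
Once this identity is available, the conclusion is immediate: the defining property of the spectral distribution sees the symbol only through the right-hand side above, so combining the identity with the hypothesis $(L_n)_{n}\sim_\lambda\omega$ gives, for every $F\in\mathcal{C}_c(\mathbb{R})$,
$$
\lim_{N\to+\infty}\frac{1}{N}\sum_{k=1}^N F\bigl(\lambda_k(L_n)\bigr) \;=\; \int_0^1 F\bigl(\xi(x)\bigr)\,dx \;=\; \frac{1}{\pi}\int_{[0,1]\times[0,\pi]} F\bigl(\xi(x)\bigr)\,dx\,d\theta ,
$$
the last equality holding because $\xi$ does not depend on $\theta$ and $\frac{1}{\pi}\int_0^\pi d\theta=1$. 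This is exactly $(L_n)_{n\in\mathbb{N}^*}\sim_\lambda\xi$.

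To prove the equimeasurability I would first record the elementary properties of the function $\Psi$ from \eqref{eq:function-psi}. It is nondecreasing and, being of the form $y\mapsto\mu_2(\{\omega\le y\})$ with $\mu_2$ finite on $[0,1]\times[0,\pi]$, it is right-continuous; since $R_\omega$ is bounded, say $R_\omega\subseteq[a,b]$, we have $\Psi\equiv 0$ on $(-\infty,a)$ and $\Psi\equiv\pi$ on $[b,+\infty)$; and $\Psi$ is constant on every connected component of $\mathbb{R}\setminus R_\omega$, because the $\omega$-preimage of such an interval is $\mu_2$-null (indeed $\omega\in R_\omega$ a.e.). In particular $\Psi$ is, up to normalization by $\pi$, the distribution function of the push-forward measure $\omega_\#\mu_2$, and $\xi$ as defined in Definition \ref{def:monotone-rearrangement} is a generalized inverse of $y\mapsto\Psi(y)/\pi$, the restriction of the infimum to $R_\omega$ being harmless by the last property. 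Consequently $\xi$ is nondecreasing, hence Borel measurable, and takes values in $[a,b]$.

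The heart of the argument is then the identity
$$
\mu_1\bigl(\{x\in[0,1]:\ \xi(x)\le y\}\bigr) \;=\; \frac{1}{\pi}\,\Psi(y),\qquad\forall\,y\in\mathbb{R},
$$
where $\mu_1$ denotes the Lebesgue measure on $[0,1]$. I would obtain this from the two inclusions
$$
\bigl\{x\in[0,1]:\ \pi x<\Psi(y)\bigr\}\ \subseteq\ \bigl\{x\in[0,1]:\ \xi(x)\le y\bigr\}\ \subseteq\ \bigl\{x\in[0,1]:\ \pi x\le\Psi(y)\bigr\},
$$
proved directly from the definition of the infimum together with the right-continuity and the ``constant off $R_\omega$'' property of $\Psi$; since $0\le\Psi(y)/\pi\le1$, the two bracketing sets are $[0,\Psi(y)/\pi)$ and $[0,\Psi(y)/\pi]$, which have the same Lebesgue measure $\Psi(y)/\pi$. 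This identity says precisely that $\xi_\#\mu_1=\frac{1}{\pi}\,\omega_\#\mu_2$, and integrating $F$ against both measures yields the equimeasurability identity stated at the outset.

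I expect the only delicate point to be the bookkeeping around the generalized inverse: the strict-versus-nonstrict inequalities in the two inclusions, the behaviour of $\xi$ at jumps and on flat stretches of $\Psi$, and the verification that restricting the defining infimum to $R_\omega$ does not change its value (which uses precisely that $R_\omega$ is closed, bounded, and carries all the mass of $\omega_\#\mu_2$). Everything else — finiteness of the integrals involved and well-definedness and boundedness of $\xi$ — is guaranteed by the boundedness of $R_\omega$, and the final passage from measures to test functions is a routine change of variables.
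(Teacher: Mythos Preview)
The paper does not supply its own proof of this proposition; it is stated there as a known result and attributed to \cite{bianchi2021analysis}. Your argument is correct and is the standard equimeasurability proof for increasing rearrangements: once the identity $\mu_1(\{x:\xi(x)\le y\})=\Psi(y)/\pi$ is established via the two bracketing inclusions, the change-of-variables formula yields the integral identity for every $F\in\mathcal{C}_c(\mathbb{R})$, and the spectral-distribution conclusion is immediate. The only point specific to the paper's formulation is that the infimum defining $\xi$ is taken over $R_\omega$ rather than all of $\mathbb{R}$; you correctly flag this and handle it using that $R_\omega$ is closed and that $\Psi$ is constant on each connected component of its complement.
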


We conclude this subsection by introducing discrete Weyl's law and some of its consequences. These results that describe the asymptotic behavior of eigenvalues are essential for our IgA spectral analysis given in the next section.
 
\begin{theorem} [Discrete Weyl’s law , \cite{bianchi2021analysis}]\label{th1}
Let $(L_n)_{n\in\mathbb{N}^*} \sim_{\lambda} \omega$ with $\omega: [0,1] \times [0,\pi] \longrightarrow \mathbb{R}$ has a bounded essential range. Define $\Psi:\, \mathbb{R} \longrightarrow \mathbb{R}_+$ as the function given by \eqref{eq:function-psi}. Then, at every point of continuity $y$ of $\Psi$, the eigenvalues of the matrices $L_n$ satisfy 
$$
\lim_{n\rightarrow  +\infty} \dfrac{\left| \left\{k=1,\dots, N:\; \lambda_{k}(L_n)\leq y\right\} \right|}{N}=\frac{1}{\pi}\Psi(y),
$$
where, for a generic set $A$, $|A|$ denotes the number of elements in set $A$.\\
Furthermore,  if we assume that $\Psi$ and $\xi$ are continuous. Then for every sequence $k=k(n)\in\{1,\dots,N\}$ such that $\lim_{n\rightarrow  +\infty} \frac{k(N)}{N}=x\in[0,1]$ and $(\lambda_{k(n)}(L_n))_n\subset  R_w$, we have  
$$ 
\left(\frac{k(n)}{N+1}, \, \lambda_{k(n)}(L_n)  \right)\longrightarrow (x,\, \xi(x)), \; \text{ when } n \longrightarrow +\infty.
$$
\end{theorem}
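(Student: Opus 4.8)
The plan is to prove the Discrete Weyl's law in two stages, deducing both claims from the spectral distribution $(L_n)_{n\in\mathbb{N}^*}\sim_\lambda\omega$ together with the properties of the monotone rearrangement. First I would establish the counting statement by approximating the indicator function $\mathbf{1}_{(-\infty,y]}$ by continuous compactly supported functions. Since the essential range of $\omega$ is bounded, all eigenvalues of interest (up to the finitely many outliers, which contribute $o(N)$ to the counting fraction) live in a fixed compact set, so it suffices to work with $F\in\mathcal{C}_c(\mathbb{R})$ that agree with $\mathbf{1}_{(-\infty,y]}$ off a small neighborhood of $y$. Squeezing between $F_\epsilon^-\le\mathbf{1}_{(-\infty,y]}\le F_\epsilon^+$ with $F_\epsilon^\pm$ continuous, applying the distributional convergence to each, and then letting $\epsilon\to0$, the limsup and liminf of $\frac{1}{N}\left|\{k:\lambda_k(L_n)\le y\}\right|$ get sandwiched between $\frac1\pi\mu_2(\{\omega<y\}+\text{error})$ and $\frac1\pi\mu_2(\{\omega\le y\}+\text{error})$; at a continuity point $y$ of $\Psi$ these two bounds coincide with $\frac1\pi\Psi(y)$, which gives the first identity.

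For the second claim I would use the first one as a quantitative tool. Assume $\Psi$ and $\xi$ continuous, take $k=k(n)$ with $k(n)/N\to x\in(0,1)$ (the boundary cases $x\in\{0,1\}$ being handled separately or by the assumption that $\lambda_{k(n)}(L_n)\in R_\omega$), and write $\ell_n:=\lambda_{k(n)}(L_n)$. The idea is to show that any subsequential limit $\ell$ of $(\ell_n)$ must equal $\xi(x)$. Suppose along a subsequence $\ell_n\to\ell$. For any $y>\ell$ that is a continuity point of $\Psi$, eventually $\ell_n\le y$, hence $k(n)\le\left|\{j:\lambda_j(L_n)\le y\}\right|$, so dividing by $N$ and passing to the limit gives $x\le\frac1\pi\Psi(y)$; letting $y\downarrow\ell$ through continuity points and using continuity of $\Psi$ yields $\pi x\le\Psi(\ell)$. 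Symmetrically, for $y<\ell$ a continuity point, eventually $\ell_n>y$, so $k(n)>\left|\{j:\lambda_j(L_n)\le y\}\right|$ (here I use that $\ell_n$ is the $k(n)$-th smallest eigenvalue, so all $j\le k(n)$ with $\lambda_j\le y$ are counted — care is needed with multiplicities, but the inequality $\left|\{j:\lambda_j(L_n)\le y\}\right|\ge k(n)$ fails precisely when $y<\ell_n$, giving the reverse), leading to $\pi x\ge\Psi(\ell^-)=\Psi(\ell)$. Combining, $\Psi(\ell)=\pi x$, and by the definition of the monotone rearrangement together with continuity/strict monotonicity considerations this forces $\ell=\xi(x)$. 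Since every subsequential limit equals $\xi(x)$ and $(\ell_n)$ is bounded, $\ell_n\to\xi(x)$; finally $k(n)/(N+1)=(k(n)/N)\cdot(N/(N+1))\to x$, giving the claimed joint convergence.

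The main obstacle I anticipate is the careful bookkeeping around eigenvalue multiplicities and the non-strict monotonicity of $\Psi$: the clean inequalities "$\ell_n\le y\Rightarrow k(n)\le\#\{\lambda_j\le y\}$" and its converse are only almost-true and must be stated with the correct direction of the strict/non-strict inequalities, and one must ensure that the set of discontinuity points of $\Psi$ is at most countable (true, since $\Psi$ is monotone) so that continuity points are dense and the squeezing arguments go through. A secondary subtlety is justifying that the finitely many outliers — which need not lie in $R_\omega$ and hence need not satisfy any Weyl asymptotics — do not affect either limit; this follows because their number is bounded independently of $n$ (indeed of order $O(1)$ by the structure of the discretization, cf. the outlier discussion preceding the statement), so they wash out after division by $N\to\infty$, and the hypothesis $(\lambda_{k(n)}(L_n))_n\subset R_\omega$ in the second part explicitly excludes the index $k(n)$ from being an outlier. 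Modulo these measure-theoretic details, both assertions are essentially immediate consequences of the definition of spectral distribution and the construction of $\xi$.
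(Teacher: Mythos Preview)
The paper does not prove this theorem: it is stated in the preliminaries (Subsection~\ref{sec:preliminariesGLT}) as a known result quoted from \cite{bianchi2021analysis}, with no proof given. There is therefore nothing in the paper to compare your proposal against.

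For what it is worth, your outline is the standard way one would establish such a statement from the definition of spectral distribution: approximate $\mathbf{1}_{(-\infty,y]}$ from above and below by $\mathcal{C}_c$ functions and squeeze (using that a monotone $\Psi$ has at most countably many discontinuities, so continuity points are dense) for the counting part; then, for the second part, use boundedness of $R_\omega$ to extract subsequential limits and pin them down via the first part and the definition of $\xi$ as the generalized inverse of $\Psi/\pi$. The one point you should make explicit is the $o(N)$ control on the number of eigenvalues outside any fixed compact neighbourhood of $R_\omega$: this follows directly from the spectral distribution applied to a continuous $F$ equal to $1$ on $R_\omega$ and supported in the neighbourhood, and is what allows you to truncate $\mathbf{1}_{(-\infty,y]}$ on the left to obtain genuinely compactly supported approximants. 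With that in hand, the remaining subtleties you flag (direction of inequalities with multiplicities, identification of $\ell$ with $\xi(x)$ under the continuity hypotheses) are routine.
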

From the discrete Weyl's law,  we can deduce the following result, which demonstrates that the possible number of outliers is very small concerning $N$, specifically of the order $o(N)$.
\begin{corollary} 
Under the hypotheses of Theorem \ref{th1}, if $\Psi$ is continuous, then :
$$
\lim_{n\rightarrow  +\infty}\dfrac{\left| \left\{k=1,\dots, N:\; \lambda_{k}(L_n)\notin R_{\omega} \right\}\right|}{N}=0.
$$
Moreover, for all $t\in R_{\omega}$ we have 
$$
\lim_{n\rightarrow  +\infty}\dfrac{\left| \left\{k=1,\dots, N:\; \lambda_{k}(L_n)\leq t,\;\lambda_{k}(L_n)\in R_{\omega} \right\}\right|}{N}=\frac{1}{\pi}\Psi(t).
$$
\end{corollary}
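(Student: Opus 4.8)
The plan is to derive this Corollary directly from the Discrete Weyl's law (Theorem \ref{th1}) together with the decomposition of $\{1,\dots,N\}$ into outlier and non-outlier indices. First I would establish the first limit (vanishing fraction of outliers). The key observation is that the essential range $R_\omega$ is a bounded closed set (by hypothesis and [Lemma 2.1, \cite{garoni2017generalized}]), hence $R_\omega \subseteq [a,b]$ for some finite $a,b$, and in fact $\Psi(y) = 0$ for $y < \inf R_\omega$ and $\Psi(y) = \pi$ (the total measure of $[0,1]\times[0,\pi]$) for $y \geq \sup R_\omega$. Picking continuity points $y_1 < \inf R_\omega$ and $y_2 > \sup R_\omega$ of $\Psi$ — which exist since a monotone function has at most countably many discontinuities — Theorem \ref{th1} gives that the fraction of eigenvalues $\leq y_1$ tends to $0$ and the fraction $\leq y_2$ tends to $1$. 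Therefore the fraction of eigenvalues lying in $(y_1, y_2]$ tends to $1$, so the fraction lying outside $(y_1,y_2] \supseteq R_\omega$ tends to $0$. Since any outlier lies outside $R_\omega$, but could in principle lie inside $(y_1,y_2]\setminus R_\omega$, I need slightly more care: I would instead argue that since $\Psi$ is continuous and constant ($=\pi$) on $[\sup R_\omega, \infty)$ and constant ($=0$) on $(-\infty,\inf R_\omega]$, for any $\epsilon>0$ one can sandwich $R_\omega$ between continuity points and use that $\Psi$ picks up all the ``mass'' exactly on $R_\omega$; the fraction of $\lambda_k(L_n)$ in any neighborhood-complement of $R_\omega$ contributes $0$ in the limit because $\mu_2$ of the corresponding preimage is $0$. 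This yields $\lim_n |\{k : \lambda_k(L_n) \notin R_\omega\}|/N = 0$.

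For the second limit, fix $t \in R_\omega$. If $t$ is a continuity point of $\Psi$, then Theorem \ref{th1} directly gives that the fraction of all eigenvalues $\leq t$ converges to $\frac{1}{\pi}\Psi(t)$; combining with the first limit (outliers are a vanishing fraction) and the elementary inequality
$$
\left|\frac{|\{k:\lambda_k(L_n)\leq t\}|}{N} - \frac{|\{k:\lambda_k(L_n)\leq t,\ \lambda_k(L_n)\in R_\omega\}|}{N}\right| \leq \frac{|\{k:\lambda_k(L_n)\notin R_\omega\}|}{N} \longrightarrow 0,
$$
we obtain the claim at continuity points. Since $\Psi$ is assumed continuous (by the Corollary's hypothesis), every $t \in R_\omega$ is a continuity point, so no further approximation is needed and the second limit holds for all $t \in R_\omega$.

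The main obstacle — really the only subtle point — is the first limit: one must be careful that an outlier could a priori sit inside the interval $(y_1,y_2]$ while still being outside $R_\omega$ (e.g. in a ``hole'' of $R_\omega$ if $\omega$'s range is not an interval). I would handle this by covering the compact set $[y_1,y_2] \setminus \{z : \operatorname{dist}(z,R_\omega) < \epsilon\}$ by finitely many intervals with continuity-point endpoints on which $\Psi$ is constant (since $\Psi$ only increases on $R_\omega$), showing each contributes a vanishing fraction of eigenvalues; then let $\epsilon \to 0$. In the setting relevant to this paper the symbol $\omega_\phi^p$ is continuous with $R_\omega$ an interval, so this complication is vacuous, but stating the argument for the closed-set case keeps the corollary at the generality of Theorem \ref{th1}. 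An alternative, cleaner route: apply the first part of Theorem \ref{th1} to the two continuity points straddling $R_\omega$ to get that a fraction $\to 1$ of eigenvalues lies in $[y_1, y_2]$, then use the measure-zero characterization of $\mathbb{R}\setminus R_\omega$ under $\Psi$ directly — for any open $U \supseteq R_\omega$, $\Psi$ restricted to the complement is locally constant, forcing the eigenvalue fraction outside $U$ to vanish, and $R_\omega$ closed lets us exhaust by such $U$.
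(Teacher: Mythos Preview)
The paper gives no proof of this corollary, only the remark that it can be ``deduced from the discrete Weyl's law''; your plan does exactly this, and the argument for the second limit (apply Theorem~\ref{th1} at the continuity point $t$, then subtract the vanishing outlier fraction) is correct and is presumably what the authors have in mind. For the first limit in the setting that matters here ($\omega^p_\phi$ continuous on a connected compact domain, so $R_\omega=[m,M]$ is an interval), your reasoning is right but can be said more directly: since $\Psi$ is assumed continuous, apply Theorem~\ref{th1} at $m$ and $M$ themselves to obtain $\tfrac{1}{N}|\{k:\lambda_k\le m\}|\to\tfrac1\pi\Psi(m)=0$ and $\tfrac{1}{N}|\{k:\lambda_k\le M\}|\to 1$; the outliers lie in $\{\lambda_k<m\}\cup\{\lambda_k>M\}$, and both fractions vanish. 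No $\epsilon$-sandwiching is needed.

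Your attempt to treat a general (possibly disconnected) $R_\omega$ by covering and letting $\epsilon\to 0$ has a genuine gap, and it cannot be repaired: proving that the eigenvalue fraction outside each neighborhood $R_\omega^\epsilon$ tends to $0$ does not bound the fraction outside $R_\omega$, because $R_\omega^c\supsetneq(R_\omega^\epsilon)^c$ and the inclusion goes the wrong way for the limit you want. In fact the stated conclusion can fail at that generality: if $R_\omega$ is a Cantor-type set with atomless $\Psi$, take $L_n$ diagonal with entries $\xi(k/N)+\varepsilon_{k,n}$, choosing $|\varepsilon_{k,n}|<1/N$ so that every entry lands in a gap of $R_\omega$ (possible since $R_\omega$ has empty interior); then $(L_n)\sim_\lambda\omega$ and $\Psi$ is continuous, yet every eigenvalue is an outlier. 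So the corollary, as written, is really an interval-$R_\omega$ statement --- which is exactly the case you handle correctly and the only one the paper ever uses.
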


In the outlier-free context, we can approximate all the eigenvalues of matrices $L_n$ through uniform sampling of the monotone rearrangement, as shown in the following result \cite{bianchi2021analysis}.

\begin{corollary}\label{cl1}
Under the hypotheses of Theorem \ref{th1}, and assuming additionally that $\Psi$ and $\xi$ are  continuous, and that $R_{\omega}$ is bounded. Then in the presence of no outliers, the error between the uniform sampling of $\xi$ and the eigenvalues of $L_{n}$, tends to $0$ when $n$ tends to infinity, namely
$$
\lim_{ n \rightarrow + \infty} \sup_{1 \leq k \leq N}\left\{ \left| \lambda_{k}(L_n) - \xi\left(\frac{k}{N+1}\right)\right|  \right\} = 0,  
$$
where $\xi$ represents the monotone rearrangement of $\omega$.
\end{corollary}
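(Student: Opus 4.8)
The plan is to derive the uniform convergence from the pointwise convergence statement in the second part of Theorem \ref{th1}, using the continuity and monotonicity of $\xi$ together with a compactness/equicontinuity argument. First I would recall that, since $\Psi$ and $\xi$ are continuous and $R_\omega$ is bounded, $\xi$ is a continuous, non-decreasing function on the compact interval $[0,1]$, hence uniformly continuous; in particular its image $\xi([0,1])$ is exactly $[\min R_\omega, \max R_\omega]$ and $\xi$ is the monotone rearrangement, so for every $n$ the sorted eigenvalues $\lambda_1(L_n)\le\cdots\le\lambda_N(L_n)$ (which in the outlier-free case all lie in $R_\omega$) are being compared against a uniform sampling of a monotone profile that they are known to converge to along any "diagonal" sequence $k(n)/N\to x$.

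The key step is to upgrade this diagonal convergence to a supremum. I would argue by contradiction: suppose $\sup_{1\le k\le N}|\lambda_k(L_n)-\xi(k/(N+1))|\not\to 0$. Then there is $\varepsilon>0$, a subsequence $n_j\to\infty$, and indices $k_j\in\{1,\dots,N(n_j)\}$ with $|\lambda_{k_j}(L_{n_j})-\xi(k_j/(N(n_j)+1))|\ge\varepsilon$. Passing to a further subsequence, $k_j/(N(n_j)+1)\to x\in[0,1]$. Along this subsequence the second part of Theorem \ref{th1} gives $\lambda_{k_j}(L_{n_j})\to\xi(x)$, while uniform continuity of $\xi$ gives $\xi(k_j/(N(n_j)+1))\to\xi(x)$; subtracting yields a contradiction with the $\varepsilon$-separation. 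A mild technical point here is that Theorem \ref{th1} is stated for a sequence $k(n)$ defined for all $n$, whereas the contradiction produces indices only along a subsequence $n_j$; I would handle this by interpolating — for $n$ not of the form $n_j$, choose $k(n)$ to be (say) $\lfloor x (N(n)+1)\rfloor$ so that the full sequence still satisfies $k(n)/(N(n)+1)\to x$ and agrees with $k_j$ at $n=n_j$ — so that Theorem \ref{th1} applies verbatim and its conclusion restricts to the subsequence. One must also check the hypothesis $(\lambda_{k(n)}(L_n))_n\subset R_\omega$ of Theorem \ref{th1}, which is exactly guaranteed by the "no outliers" assumption.

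The main obstacle I anticipate is this interpolation/uniformity bookkeeping: Theorem \ref{th1} only delivers convergence along prescribed index sequences, and one must be careful that the endpoints $x=0$ and $x=1$ are not excluded (the monotone rearrangement $\xi$ was defined on $(0,1)$, with a continuous extension to $[0,1]$ by hypothesis, so $\xi(0)$ and $\xi(1)$ make sense and the convergence statement extends to them by continuity). A cleaner alternative, avoiding contradiction, would be a direct three-term estimate: write $|\lambda_k(L_n)-\xi(k/(N+1))|$ and compare both quantities to $\xi$ evaluated at nearby grid points, using monotonicity of the eigenvalues to sandwich $\lambda_k(L_n)$ between empirical quantiles whose limits are controlled by the first (Weyl-law) part of Theorem \ref{th1}, then invoking uniform continuity of $\xi$ to make the mesh-dependent error uniform in $k$; this route trades the subsequence argument for a modulus-of-continuity computation but relies on the same two ingredients. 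Either way, once the diagonal convergence of Theorem \ref{th1} and the uniform continuity of $\xi$ are in hand, the remaining work is routine.
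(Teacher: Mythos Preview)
The paper does not supply its own proof of Corollary~\ref{cl1}; the result is quoted from \cite{bianchi2021analysis} and stated without argument. So there is nothing in the paper to compare against at the level of technique.

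That said, your proposal is correct and is the natural way to pass from the diagonal convergence in Theorem~\ref{th1} to the uniform statement. The contradiction argument is clean: extract a bad subsequence $(n_j,k_j)$, pass to a further subsequence so that $k_j/(N(n_j)+1)\to x\in[0,1]$, and then play the pointwise convergence $\lambda_{k_j}(L_{n_j})\to\xi(x)$ from Theorem~\ref{th1} against the continuity statement $\xi(k_j/(N(n_j)+1))\to\xi(x)$. Your handling of the two bookkeeping issues is fine: the interpolation of $k_j$ to a full sequence $k(n)$ by filling in with $\lfloor x(N(n)+1)\rfloor$ for $n\neq n_j$ does give $k(n)/(N(n)+1)\to x$, so Theorem~\ref{th1} applies and its conclusion restricts to the subsequence; and the no-outliers hypothesis is exactly what guarantees $(\lambda_{k(n)}(L_n))_n\subset R_\omega$. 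The endpoint issue at $x\in\{0,1\}$ is also correctly dispatched by the assumed continuity of $\xi$ on $[0,1]$.

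One small remark: the alternative ``direct three-term estimate'' you sketch (sandwiching $\lambda_k(L_n)$ between empirical quantiles controlled by the counting-function part of Theorem~\ref{th1} and then using uniform continuity of $\xi$) is in fact the route more commonly taken in the GLT literature, and it has the minor advantage of avoiding any interpolation of index sequences. Either route is adequate here.
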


\begin{remark}
In both  Theorem \ref{th1} and Corollary \ref{cl1}, an essential assumption is the absence of outliers.  Nevertheless, as illustrated in \cite[Chapter 5.1.2, p. 153]{cottrell2009isogeometric}, there is substantial numerical evidence pointing to the existence of outliers in Isogeometric Analysis (IgA) when B-splines of degree $p$ are employed. Specifically, it has been observed that the number of outliers is solely dependent on $p$ and does not vary with the discretization step $h=1/n$. 

\end{remark}

In the subsequent sections of the paper, we will employ the notation $OUT(p, n)$ to represent the number of outliers, and $\mathcal{I}(p, n) = \left \{1, \cdots, N - OUT(p,n) \right \}$ to denote the set of indices of eigenvalues after removing the outliers.

\subsection{The IgA GLT-Symbol}\label{symbol}
$\hspace{0.52cm}$Before presenting the theorem that provides the spectral symbol of the matrix $L^{p}_{\phi,n}$, it is essential to introduce what we call the cardinal B-spline \cite{de1978practical}. Let $\mathcal{N}_p: \mathbb{R}\longrightarrow\mathbb{R} $ be the cardinal B-spline of degree $p$ recursively defined as follows

\begin{equation}\label{eq:cardinal-B-spline-1}
\mathcal{N}_0(x)=\left\{
\begin{array}{ll}
1,\;\;x\in[0,1],\\
0,\;\;\text{otherwise},
\end{array}
\right.
\vspace{-0.25cm}  
\end{equation}
and
\begin{equation}\label{eq:cardinal-B-spline-2}
\mathcal{N}_p(x)=\frac{x}{p}\mathcal{N}_{p-1}(x)+\frac{p+1-x}{p}\mathcal{N}_{p-1}(x-1),\;\;x\in\mathbb{R},\;\;p\geq 1.
\end{equation}
It was proved in\cite{de1978practical} ( see also \cite{chui1992introduction}) that $\mathcal{N}_p\in C^{p-1}(\mathbb{R})$ and 
$$supp(\mathcal{N}_p)=[0,p+1].$$

For $p\in\mathbb{N}$, let 
$$f_p:[0,\pi]\longrightarrow\mathbb{R}, \quad f_p(\theta)=-\mathcal{N}^{''}_{2p+1}(p+1)-2\displaystyle\sum_{k=1}^p\mathcal{N}^{''}_{2p+1}(p+1-k)\cos(k\theta),\;\;p\geq 1,$$
$$g_p:[0,\pi]\longrightarrow\mathbb{R}, \quad g_p(\theta)=\mathcal{N}_{2p+1}(p+1)+2\displaystyle\sum_{k=1}^p\mathcal{N}_{2p+1}(p+1-k)\cos(k\theta),\;\;p\geq 0,$$
$$e_p:[0,\pi]\longrightarrow\mathbb{R},\quad e_p(\theta)=\dfrac{f_p(\theta)}{g_p(\theta)},\;\;p\geq 1.$$
It is known from \cite{garoni2014spectrum} that
$$f_p(\theta)=(2-2\cos(\theta))g_{p-1}(\theta),\;\;\theta\in[0,\pi],\;\; p\geq 1.$$
$$\left( \frac{4}{\pi^2}\right)^{p+1}\leq g_p(\theta)\leq g_p(0)=1,\;\; \theta\in[0,\pi],\;\; p\geq 0.$$
We can observe also from \cite{donatelli2016spectral} ( see also \cite{bianchi2018spectral}) that for every $p\geq 2$, it holds that
\begin{equation}\label{eq:preliminaries-1}
e_p(\theta)=(2-2\cos(\theta))\frac{g_{p-2}(\theta)}{g_{p}(\theta)},\;\; \left( \frac{2}{\pi} \right)^{p-1}\leq\frac{g_{p-2}(\theta)}{g_{p}(\theta)}\leq \left( \frac{\pi}{2} \right)^{p+1},\;\; \theta\in[0,\pi].
\end{equation}
$\hspace{0.52cm}$Using the inner product Lemma on the cardinal B-splines, it has been demonstrated that both $M_{\phi,n}^p$ and $K_{\phi,n}^p$ are small rank perturbations of Toeplitz matrices. A complete proof of this result can be found in \cite{garoni2017generalized}. As a result of the GLT theory, we obtain the following theorem:
\begin{theorem}[\cite{garoni2017generalized}, IgA GLT symbol]
Let $p\geq 1$ and $\phi\in \mathbf{C}_{[0,1]}$. Then $$n^{-2}L^{p}_{\phi,n}\sim_{\lambda}\omega^p_{\phi},$$
where  
\begin{equation}\label{eq:spectralsymbo-full-matrix}
\omega^p_{\phi}(x,\theta)=\dfrac{e_p(\theta)}{\left(\phi^{'}(x)\right)^2},\quad\forall (x,\theta)\in [0,1]\times[0,\pi].
\end{equation}

\end{theorem}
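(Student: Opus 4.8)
\section*{Proof proposal}

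The plan is to read off the symbol from the GLT \emph{algebra}, after first exhibiting the Toeplitz‑like structure of the two building blocks $M_{\phi,n}^p$ and $K_{\phi,n}^p$. Write $\widetilde M_{\phi,n}^p:=n\,M_{\phi,n}^p$ and $\widetilde K_{\phi,n}^p:=n^{-1}K_{\phi,n}^p$, so that
$$n^{-2}L_{\phi,n}^p \;=\; n^{-2}\bigl(M_{\phi,n}^p\bigr)^{-1}K_{\phi,n}^p \;=\; \bigl(\widetilde M_{\phi,n}^p\bigr)^{-1}\widetilde K_{\phi,n}^p .$$
Let us say that a matrix sequence $\{A_n\}_n$ is \emph{GLT with symbol $\kappa$}, written $\{A_n\}_n\sim_{\mathrm{GLT}}\kappa$, in the usual sense of \cite{garoni2017generalized}. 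The argument has three stages: (i) show $\{\widetilde M_{\phi,n}^p\}_n\sim_{\mathrm{GLT}}\phi'(x)\,g_p(\theta)$ and $\{\widetilde K_{\phi,n}^p\}_n\sim_{\mathrm{GLT}} f_p(\theta)/\phi'(x)$; (ii) use that GLT sequences form a $*$‑algebra stable under inversion whenever the symbol is nonzero a.e., to get $\{n^{-2}L_{\phi,n}^p\}_n\sim_{\mathrm{GLT}}\omega_\phi^p$; (iii) upgrade the GLT relation to the spectral distribution $\sim_\lambda$ via a similarity to a Hermitian matrix.

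For stage (i) the heart of the matter is the inner‑product lemma for cardinal B‑splines. On the interior of $[0,1]$ the basis function $N_j^p$ is, up to the affine rescaling $x\mapsto nx$, the translate $\mathcal{N}_p(n\cdot-j+p)$ of the cardinal B‑spline, whose support has length $(p+1)/n$. Since $\phi'\in\mathcal C^1([0,1])$ is uniformly continuous and bounded away from $0$, on each such support $\phi'$ deviates from its value $\phi'(j/n)$ at a representative node by $O(1/n)$; freezing this constant value in \eqref{eq:mass-matrix-dif}--\eqref{eq:stiffness-matrix-dif} and invoking the identities
$$\int_{\mathbb{R}}\mathcal{N}_p(t)\,\mathcal{N}_p(t-k)\,dt=\mathcal{N}_{2p+1}(p+1-k),\qquad \int_{\mathbb{R}}\mathcal{N}_p'(t)\,\mathcal{N}_p'(t-k)\,dt=-\mathcal{N}_{2p+1}''(p+1-k),$$
one gets that $\widetilde M_{\phi,n}^p$ equals a diagonal‑sampling matrix $\diag\!\bigl(\phi'(j/n)\bigr)$ times the symmetric Toeplitz matrix with entries $\mathcal{N}_{2p+1}(p+1-(i-j))$ (which is generated precisely by $g_p$), plus a perturbation that splits into a piece vanishing in spectral norm (the $O(1/n)$ wobble of $\phi'$) and a piece of rank $O(p)$ localized at the two endpoints (the boundary functions $N_j^p$ that are truncated, not plain translates of $\mathcal{N}_p$); likewise $\widetilde K_{\phi,n}^p=\diag\!\bigl(1/\phi'(j/n)\bigr)\cdot(\text{Toeplitz generated by } f_p)+(\text{negligible})$. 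The GLT axioms — diagonal sampling of a Riemann‑integrable function is GLT with that function; Toeplitz matrices generated by an $L^1$ function are GLT with it; sums and products of GLT sequences are GLT with the sum/product symbol; and a perturbation that is infinitesimal in norm plus $o(n)$ in rank does not change the GLT symbol — then yield the two claimed symbols.

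For stage (ii), the bounds $g_p(\theta)\ge(4/\pi^2)^{p+1}>0$ on $[0,\pi]$ and $\phi'(x)>0$ on $[0,1]$ show that the symbol $\phi'(x)g_p(\theta)$ is a.e.\ nonzero, hence $\{(\widetilde M_{\phi,n}^p)^{-1}\}_n\sim_{\mathrm{GLT}}1/(\phi'(x)g_p(\theta))$; multiplying by $\{\widetilde K_{\phi,n}^p\}_n$ gives
$$\{n^{-2}L_{\phi,n}^p\}_n=\{(\widetilde M_{\phi,n}^p)^{-1}\widetilde K_{\phi,n}^p\}_n\;\sim_{\mathrm{GLT}}\;\frac{1}{\phi'(x)g_p(\theta)}\cdot\frac{f_p(\theta)}{\phi'(x)}=\frac{f_p(\theta)}{g_p(\theta)\,(\phi'(x))^2}=\frac{e_p(\theta)}{(\phi'(x))^2}=\omega_\phi^p(x,\theta).$$
For stage (iii), $M_{\phi,n}^p$ and $K_{\phi,n}^p$ are symmetric positive definite, so $n^{-2}L_{\phi,n}^p$ is similar to the Hermitian matrix $(M_{\phi,n}^p)^{-1/2}\bigl(n^{-2}K_{\phi,n}^p\bigr)(M_{\phi,n}^p)^{-1/2}$; being built from the positive GLT sequences above by a square root, an inverse and products, this Hermitian sequence is itself GLT with the same symbol $\omega_\phi^p$. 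A GLT sequence of Hermitian matrices is eigenvalue‑distributed as its symbol, and the spectrum is similarity‑invariant, whence $n^{-2}L_{\phi,n}^p\sim_\lambda\omega_\phi^p$.

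The main obstacle is stage (i): turning the variable‑coefficient integrals \eqref{eq:mass-matrix-dif}--\eqref{eq:stiffness-matrix-dif} into a genuine ``diagonal $\times$ Toeplitz $+$ negligible'' decomposition. Concretely one must (a) quantify the error of freezing $\phi'$ on each B‑spline support and show the resulting matrix perturbation goes to $0$ in the normalized trace (Schatten‑$1$) norm, or at least splits as vanishing‑norm plus $o(n)$‑rank; and (b) carefully isolate the $O(p)$ boundary rows and columns, where $N_j^p$ is a truncated rather than full cardinal B‑spline, and absorb them into the small‑rank term. Once this is in place, everything else is bookkeeping with the already‑known identities for $f_p$, $g_p$, $e_p$ together with a direct appeal to the GLT axioms.
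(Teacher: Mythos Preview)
Your proposal is correct and follows essentially the same approach that the paper alludes to: the paper does not actually prove this theorem but only sketches it in one sentence (``Using the inner product Lemma on the cardinal B-splines, it has been demonstrated that both $M_{\phi,n}^p$ and $K_{\phi,n}^p$ are small rank perturbations of Toeplitz matrices'') and defers the complete argument to \cite{garoni2017generalized}. Your three-stage outline (diagonal-times-Toeplitz decomposition via the cardinal B-spline inner-product identities, GLT algebra for inversion and products, then similarity to a Hermitian matrix to pass from $\sim_{\mathrm{GLT}}$ to $\sim_\lambda$) is precisely the standard GLT machinery that the cited reference carries out in detail.
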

The following corollary illustrates the regularity and some properties of the symbol   $e_p$ (see \cite{bianchi2021analysis,everitt1999boundary}).
\begin{corollary} 
Let $p\geq 1$. The function $e_p$ is differentiable, nonnegative, and monotonically increasing on the interval $[0,\pi]$. Additionally, it satisfies the following properties:
$$
e_p(\theta)\sim_{0^{+}} \theta^2\;\; \text{as},\quad \lim_{p\rightarrow +\infty }\sup_{\theta\in[0,\pi]}|e_p(\theta)-\theta^2|=0.
$$
\end{corollary}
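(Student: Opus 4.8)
The plan is to read all four assertions off two representations of $e_p$ that are already essentially on the table. The first is the ratio of cosine polynomials $e_p=f_p/g_p=(2-2\cos\theta)\,g_{p-1}(\theta)/g_p(\theta)$, valid for every $p\ge1$ with the convention $g_0\equiv1$. The second is the Poisson-summation (sampling) identity for the cardinal B-spline, $g_q(\theta)=(2\sin(\theta/2))^{2q+2}\sum_{k\in\mathbb Z}(\theta+2\pi k)^{-2(q+1)}$, which turns the ratio above into the clean form
\[
e_p(\theta)=\frac{A_p(\theta)}{A_{p+1}(\theta)},\qquad A_m(\theta):=\sum_{k\in\mathbb Z}(\theta+2\pi k)^{-2m}.
\]
With these in hand, differentiability and nonnegativity are immediate: each $g_q$ is a trigonometric polynomial, hence $C^\infty$ on $[0,\pi]$, and the bound $g_q(\theta)\ge(4/\pi^2)^{q+1}>0$ recalled above shows $g_p$ never vanishes, so $e_p$ is smooth; and since $2-2\cos\theta=4\sin^2(\theta/2)\ge0$ while $g_{p-1},g_p>0$, the formula $e_p=(2-2\cos\theta)\,g_{p-1}/g_p$ gives $e_p\ge0$.

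The heart of the matter — and the step I expect to be the main obstacle — is monotonicity. I would reduce it to a statement about the $g_q$'s alone: on $(0,\pi]$ one has $1/e_p(\theta)=\frac{1}{2-2\cos\theta}\cdot\frac{g_p(\theta)}{g_{p-1}(\theta)}$, in which $(2-2\cos\theta)^{-1}$ is positive and strictly decreasing, so it suffices to show $g_p/g_{p-1}$ is non-increasing on $[0,\pi]$; then $1/e_p$ is a product of positive non-increasing functions, $e_p$ is non-decreasing on $(0,\pi]$, and one extends to $\theta=0$ by continuity. To prove $g_p/g_{p-1}$ non-increasing I would use the Euler–Frobenius (total positivity) structure of the coefficient sequence $\bigl(\mathcal N_{2q+1}(q+1-k)\bigr)_k$: the associated symmetric Euler–Frobenius polynomial has only real, simple, negative roots, giving a factorization $g_q(\theta)=c_q\prod_j\bigl(1+r_j^2+2r_j\cos\theta\bigr)$ with all $r_j\in(0,1)$, so that each $g_q$ is already decreasing on $[0,\pi]$; then, pairing each root of $g_{p-1}$ with a (necessarily larger) root of $g_p$ via the interlacing of the roots of consecutive Euler–Frobenius polynomials and using the elementary identity $\frac{d}{d(\cos\theta)}\frac{1+r^2+2r\cos\theta}{1+s^2+2s\cos\theta}=\frac{2(r-s)(1-rs)}{(1+s^2+2s\cos\theta)^2}$ (positive when $s<r$ and $rs<1$), each paired ratio and the one leftover factor is non-increasing in $\theta$, whence so is $g_p/g_{p-1}$. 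Alternatively this monotonicity can simply be invoked from \cite{bianchi2021analysis,everitt1999boundary}; in either case it is the step on which everything else in the section rests.

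It remains to treat the two asymptotic statements, for which the representations above do the work directly. For the behaviour at $0$: in $e_p(\theta)=(2-2\cos\theta)\,g_{p-1}(\theta)/g_p(\theta)$ we have $2-2\cos\theta=\theta^2+O(\theta^4)$ as $\theta\to0^+$, while $g_{p-1}(\theta)/g_p(\theta)\to g_{p-1}(0)/g_p(0)=1$ by continuity, hence $e_p(\theta)/\theta^2\to1$, i.e. $e_p(\theta)\sim_{0^+}\theta^2$. For the uniform-in-$p$ limit, start from $e_p(\theta)=A_p(\theta)/A_{p+1}(\theta)$; on $[0,\pi]$ the term $k=0$ dominates, and isolating it together with $k=-1$ and bounding $\theta^{2m}\sum_{k\neq0,-1}(\theta+2\pi k)^{-2m}=O(4^{-m})$ uniformly (the remaining $k$ satisfy $|\theta+2\pi k|\ge2\pi$ and $\theta\le\pi$), one gets, with $v(\theta):=\theta/(2\pi-\theta)\in[0,1]$,
\[
e_p(\theta)=\theta^2\cdot\frac{1+v^{2p}+O(4^{-p})}{1+v^{2p+2}+O(4^{-p})},
\qquad\text{hence}\qquad
\bigl|e_p(\theta)-\theta^2\bigr|\le\pi^2\bigl(v^{2p}(1-v^2)+O(4^{-p})\bigr).
\]
Since $\sup_{v\in[0,1]}v^{2p}(1-v^2)\le\frac1{p+1}$, the right-hand side is $\le\pi^2\bigl(\frac1{p+1}+O(4^{-p})\bigr)\to0$, uniformly in $\theta\in[0,\pi]$. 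The only delicate point here is uniformity near $\theta=\pi$, where the $k=0$ and $k=-1$ contributions are of the same order; this is precisely why the estimate is organised through the competing quantity $v^{2p}$, which stays uniformly controlled.
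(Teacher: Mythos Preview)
Your proposal is correct, but there is nothing in the paper to compare it against: the paper does not prove this corollary at all, it merely quotes the result and points to \cite{bianchi2021analysis,everitt1999boundary}. What you have written is therefore strictly more than the paper provides.

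On the substance: differentiability and nonnegativity follow exactly as you say from $e_p=(2-2\cos\theta)\,g_{p-1}/g_p$ together with the lower bound $g_q\ge(4/\pi^2)^{q+1}>0$; the asymptotic $e_p(\theta)\sim_{0^+}\theta^2$ is immediate from $g_q(0)=1$; and your uniform-in-$p$ argument via the sampling identity $e_p=A_p/A_{p+1}$ is clean and correct. The tail bound $\theta^{2m}\sum_{k\ne0,-1}(\theta+2\pi k)^{-2m}=O(4^{-m})$ holds because $|\theta+2\pi k|\ge\pi j$ for the $j$-th smallest such term ($j\ge2$), giving a bound by $\zeta(2m)-1=O(4^{-m})$; the device $v=\theta/(2\pi-\theta)$ then handles the only genuinely delicate regime $\theta\uparrow\pi$ exactly as you indicate, and $\sup_{[0,1]}v^{2p}(1-v^2)\le 1/(p+1)$ finishes it.

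The one place your write-up is a sketch rather than a proof is the monotonicity step. The reduction to ``$g_p/g_{p-1}$ non-increasing'' is fine, and the Euler--Frobenius factorisation $g_q(\theta)=c_q\prod_j(1+r_j^2+2r_j\cos\theta)$ with $r_j\in(0,1)$ is the right tool; but note that $g_p$ carries $p$ such factors and $g_{p-1}$ carries $p-1$ (the underlying Euler--Frobenius polynomials differ in degree by two, and the roots pair under $r\mapsto1/r$), so the interlacing you need is not between consecutive Euler--Frobenius polynomials but two steps apart. This can be arranged, and in any case you explicitly offer the citation route the paper itself takes, so there is no gap.
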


In what follows, our focus will be on the spectral analysis of the matrix $L^{p}_{\phi,n}$. To simplify the notation, we refer to the size of  $L^{p}_{\phi,n}$ by $N$, such that $N=n+p-2$.
\section{IgA  Eigenvalues Distribution Analysis}\label{disIGA}
$\hspace{0.52cm}$In this section,  our objective is to apply the Generalized Locally Toeplitz (GLT) theory to investigate the behavior of the eigenvalues resulting from the discretization  introduced in Subsection \ref{sec:descritization}. 
We will demonstrate that by employing a reparametrization $\phi\in\mathbf{C}_{[0,1]}$, we can improve upon the results provided by Bianchi in \cite{bianchi2021analysis} and presented with alight to our context in the previous Subsection  \ref{sec:preliminariesGLT}, particularly concerning the Discrete Weyl's law. Additionally, we introduce a  novel approach for analyzing the behavior of the eigenvalues.

We proved that it is possible to build an order among the eigenvalues associated with different reparametrizations, and we illustrate that valuable and unknown information on the distribution of the eigenvalues can be derived through the use of the GLT theory, specifically when the average gap condition is satisfied. \\ 

Before delving into the study of the gap between successive square roots of eigenvalues, it is important to understand  how the distribution of eigenvalues is influenced by the chosen reparameterization. With this objective in mind, we introduce the following notations.\\
For all $\phi\in \mathbf{C}_{[0,1]}$, we note 
$$
\Psi_{\phi}^{\sqrt{\;}}(y)=\mu_{2} \left( \left\{  (x,\theta)\in [0,1]\times[0,\pi]:\;\; \sqrt{\omega_{\phi}^p(x,\theta)}\leq y    \right\} \right),\;\;\forall y\in R_{\sqrt{\omega_{\phi}^p}}.
$$
With this notation, we have 
$$
\sqrt{\xi_{\phi}^p(x)}=\inf\left\{ y\in  R_{\sqrt{\omega_{\phi}^p}}:\;\; \Psi_{\phi}^{\sqrt{\;}}(y)>\pi  x  \right\},\;\;\forall x\in(0,1).
$$
where $\xi_{\phi}^p$ denotes the monotone rearrangement of $\omega_\phi^p$.  Note that in our case the  $\sqrt{\omega_{\phi}^p}$ is a continuous function, which implies that the essential range of $\sqrt{\omega_{\phi}^p}$ coincides with its regular range. Furthermore, it is apparent in this scenario that $\Psi_{\phi}^{\sqrt{\;}}$ is strictly increasing over $Rg\left(\sqrt{\omega_{\phi}^p}\right)$.

We make use of the following result, which provides sufficient conditions on the reparametrization to satisfy the average gap condition. For the proof see \cite{bianchi2018spectral}.

\begin{proposition}\label{stefa}
For all  $\phi\in \mathbf{C}_{[0,1]}$, we have 
$$
\Psi_{\phi}^{\sqrt{\;}}\in \mathcal{C}^1((0,+\infty[)\;\;\text{and} \;\; \sup_{y \geq 0} \left(\Psi^{\sqrt{\;}}_{\phi}(y)\right)'<\infty.
$$ 

\end{proposition}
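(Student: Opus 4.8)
The plan is to rewrite $\Psi_{\phi}^{\sqrt{\;}}$ as a genuinely one‑dimensional integral whose only non‑smoothness is carried by a Lipschitz function, and then to differentiate under the integral sign. Since $\phi'>0$, the symbol factors as $\sqrt{\omega_{\phi}^{p}(x,\theta)}=h_p(\theta)/\phi'(x)$ with $h_p:=\sqrt{e_p}$; from $e_p=f_p/g_p$, $f_p(\theta)=4\sin^2(\theta/2)\,g_{p-1}(\theta)$, $g_p\ge(4/\pi^2)^{p+1}>0$ and $g_{p-1}(0)=g_p(0)=1$ one gets $h_p(\theta)=2\sin(\theta/2)\sqrt{g_{p-1}(\theta)/g_p(\theta)}\in\mathcal{C}^{\infty}([0,\pi])$ with $h_p(0)=0$, $h_p'(0)=1$, and $h_p$ strictly increasing (as $e_p$ is nonnegative, nonconstant and monotonically increasing). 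Set $c_p:=h_p(\pi)$. By Tonelli's theorem, for every $y>0$,
\[
\Psi_{\phi}^{\sqrt{\;}}(y)=\mu_2\!\left(\{(x,\theta):h_p(\theta)\le y\,\phi'(x)\}\right)=\int_0^{\pi}F\!\left(\frac{h_p(\theta)}{y}\right)d\theta,\qquad F(s):=\mu_1\!\left(\{x\in[0,1]:\phi'(x)\ge s\}\right),
\]
where $\mu_1$ is Lebesgue measure on $\mathbb{R}$.

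The next step records why convexity/concavity is needed. Because $\phi''$ has constant sign and is continuous on the compact $[0,1]$, the map $\phi'$ is a strictly monotone $\mathcal{C}^1$‑diffeomorphism of $[0,1]$ onto $[a,b]:=[\min_{[0,1]}\phi',\max_{[0,1]}\phi']$ with $0<a<b$, and $|\phi''|\ge m>0$. Hence $F\equiv1$ on $(-\infty,a]$, $F\equiv0$ on $(b,+\infty)$, and $F$ is $\mathcal{C}^1$ on $(a,b)$ with $|F'(s)|=1/|\phi''((\phi')^{-1}(s))|\le1/m$; in particular $F$ is globally $(1/m)$‑Lipschitz, non‑increasing, and piecewise $\mathcal{C}^1$ with $F'$ bounded and continuous off the two points $a,b$. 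This is exactly the step that breaks for affine $\phi$, where $F$ is a step function, and it is the only place $\phi''\neq0$ enters. Now I differentiate under the integral sign: for fixed $y_0>0$, the map $y\mapsto F(h_p(\theta)/y)$ is differentiable at $y_0$ for every $\theta$ outside the at most two solutions of $h_p(\theta)/y_0\in\{a,b\}$, with derivative $-h_p(\theta)y_0^{-2}F'(h_p(\theta)/y_0)$, and its difference quotients are dominated for $y$ near $y_0$ by the constant $4c_p/(my_0^2)$ via the Lipschitz bound; dominated convergence gives
\[
\big(\Psi_{\phi}^{\sqrt{\;}}\big)'(y)=-\frac1{y^{2}}\int_0^{\pi}h_p(\theta)\,F'\!\left(\frac{h_p(\theta)}{y}\right)d\theta,\qquad y>0.
\]
Continuity of the right‑hand side follows from a second dominated‑convergence argument: since $h_p$ is strictly monotone, $\{\theta:h_p(\theta)/y_0\in\{a,b\}\}$ is $\mu_1$‑null, so $F'(h_p(\theta)/y)\to F'(h_p(\theta)/y_0)$ for a.e.\ $\theta$. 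Hence $\Psi_{\phi}^{\sqrt{\;}}\in\mathcal{C}^{1}((0,+\infty))$.

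For the uniform bound I split into three regimes. If $y\ge c_p/a$, then $h_p(\theta)/y\le a$ for all $\theta$, so the integrand vanishes and $(\Psi_{\phi}^{\sqrt{\;}})'(y)=0$. On any compact $[\varepsilon,c_p/a]$, boundedness is immediate from the continuity just proved. For small $y$ the integrand is nonzero only on $\{\theta:ay<h_p(\theta)<by\}$, a set of measure $h_p^{-1}(by)-h_p^{-1}(ay)$; using $h_p'\ge\tfrac12$ on a fixed neighbourhood $[0,\delta]$ of $0$ (valid once $by\le h_p(\delta)$, since $h_p'(0)=1$ and $h_p'$ is continuous there) this measure is $\le2(b-a)y$, while on that set $h_p(\theta)\le by$ and $|F'|\le1/m$. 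Therefore, for small $y$,
\[
\big|\big(\Psi_{\phi}^{\sqrt{\;}}\big)'(y)\big|\le\frac1{y^{2}}\cdot by\cdot\frac1m\cdot2(b-a)y=\frac{2b(b-a)}{m},
\]
independently of $y$. Combining the three regimes gives $\sup_{y\ge0}(\Psi_{\phi}^{\sqrt{\;}})'(y)<\infty$.

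I expect the slicing direction to be the crux. The naive identity $\Psi_{\phi}^{\sqrt{\;}}(y)=\int_0^1\min\{\pi,\,h_p^{-1}(y\,\phi'(x))\}\,dx$ (with $h_p^{-1}:=\pi$ beyond $c_p$) is useless for a \emph{uniform} derivative estimate, because $e_p'(\pi)=0$ — already $e_1(\theta)=6(1-\cos\theta)/(2+\cos\theta)$ has $e_1'(\pi)=0$, and in fact $e_p$ is symmetric about $\pi$ so $e_p'(\pi)=0$ for every $p$ — whence $h_p^{-1}$ has a vertical tangent at $c_p$ and is not Lipschitz near $\theta=\pi$. Slicing in $\theta$ rather than $x$ transfers all the non‑smoothness onto $F$, which is Lipschitz precisely because $\phi$ is strictly convex or concave; once that observation is in place, the two dominated‑convergence arguments and the small‑$y$ estimate are routine. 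This is, in substance, the argument of \cite{bianchi2018spectral}.
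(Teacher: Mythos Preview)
The paper does not supply its own proof of this proposition; it simply refers the reader to \cite{bianchi2018spectral}, and indeed later uses exactly the $\theta$-slicing representation $\Psi_{\phi}^{\sqrt{\;}}(y)=\pi-\int_{S(y)}(\phi')^{-1}(\sqrt{e_p(\theta)}/y)\,d\theta-\mu_1(A)$ (see the proof of Theorem~\ref{dis1}) that your argument is built on. Your proof is correct and, as you yourself acknowledge in the final paragraph, is in substance the argument of \cite{bianchi2018spectral}; the key observation that strict convexity/concavity of $\phi$ makes $F(s)=\mu_1\{\phi'\ge s\}$ globally Lipschitz, together with $h_p'(0)=1$ to control the small-$y$ regime, is precisely what that reference exploits.
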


The following theorem provides insight into the distribution of eigenvalue families. It demonstrates that no family of eigenvalues can be obtained from two different reparametrizations.

\begin{theorem}[Distribution of eigenvalues]\label{tdis1}
Let $\phi_1, \phi_2 \in \mathbf{C}_{[0,1]}$, and let $I$ be a closed interval in  $Rg(\omega_{\phi_1}^p) \cap Rg(\omega_{\phi_2}^p)$. If, for all $y \in I$, it holds
$$ 
\forall y\in I, \quad \Psi_{\phi_1}^{\sqrt{\;}}(\sqrt{y})>\Psi_{\phi_2}^{\sqrt{\;}}(\sqrt{y}).
$$
Then, there exists $  n_0\in\mathbb{N}^{*}$,  such that for all $n\geq n_0$  and $k\in \mathbb{N}^{*}$ we have: if $n^{-2}\lambda_{k,h}^{\phi_1},n^{-2}\lambda_{k,h}^{\phi_2}\in I$, then
$$ 
n^{-2}\lambda_{k,h}^{\phi_1}<n^{-2}\lambda_{k,h}^{\phi_2}.
$$ 
\end{theorem}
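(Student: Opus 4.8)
The plan is to exploit the second part of the Discrete Weyl's law (Theorem \ref{th1}), which says that for an index sequence $k(n)$ with $k(n)/N \to x$, the eigenvalues converge to $\xi(x)$, together with the fact that the monotone rearrangement $\xi_\phi^p$ is the inverse (in an appropriate sense) of $\frac{1}{\pi}\Psi_\phi^{\sqrt{\;}}(\sqrt{\cdot})$. Concretely, I would first translate the hypothesis on $\Psi$'s into a statement about the rearrangements: since $\sqrt{\omega_\phi^p}$ is continuous with range equal to its essential range and $\Psi_\phi^{\sqrt{\;}}$ is strictly increasing and $C^1$ on $Rg(\sqrt{\omega_\phi^p})$ (Proposition \ref{stefa}), the relation $\Psi_{\phi_1}^{\sqrt{\;}}(\sqrt{y}) > \Psi_{\phi_2}^{\sqrt{\;}}(\sqrt{y})$ for all $y \in I$ is equivalent, after inverting, to $\xi_{\phi_1}^p(x) < \xi_{\phi_2}^p(x)$ for all $x$ in the corresponding interval $J \subset (0,1)$ of ``quantile'' values, i.e. $J = \frac{1}{\pi}\Psi_{\phi_i}^{\sqrt{\;}}(\sqrt{I})$ suitably interpreted. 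I would pin down this correspondence carefully using that $\Psi(y) = \mu_2(\{\omega \le y\})$ and $\Psi^{\sqrt{\;}}(\sqrt{y}) = \mu_2(\{\sqrt{\omega} \le \sqrt{y}\}) = \mu_2(\{\omega \le y\})$, so $\Psi^{\sqrt{\;}}(\sqrt{\cdot}) = \Psi(\cdot)$ and the two rearrangements $\xi_\phi^p$ (of $\omega_\phi^p$) are compared directly.

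Next I would make the limit quantitative on the interval $I$. The key point is uniformity: Corollary \ref{cl1} gives $\sup_{1 \le k \le N} |\lambda_k(L_n) - \xi(k/(N+1))| \to 0$ in the outlier-free case, and even with $OUT(p,n)$ outliers (bounded in $n$) this holds uniformly over the non-outlier indices $\mathcal{I}(p,n)$. So set $\varepsilon = \frac{1}{3}\min_{x \in \bar J}\bigl(\xi_{\phi_2}^p(x) - \xi_{\phi_1}^p(x)\bigr) > 0$ (positive and attained since $I$ is closed hence $\bar J$ compact, and $\xi_{\phi_1}^p < \xi_{\phi_2}^p$ continuously on $\bar J$). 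Choose $n_0$ so that for $n \ge n_0$ both $\sup_k |n^{-2}\lambda_{k,h}^{\phi_1} - \xi_{\phi_1}^p(k/(N+1))| < \varepsilon$ and the analogous bound for $\phi_2$ hold. Then if both $n^{-2}\lambda_{k,h}^{\phi_1}$ and $n^{-2}\lambda_{k,h}^{\phi_2}$ lie in $I$, the corresponding quantile $k/(N+1)$ lies (up to the $\varepsilon$-fattening) in $\bar J$, so
\[
n^{-2}\lambda_{k,h}^{\phi_1} < \xi_{\phi_1}^p\!\left(\tfrac{k}{N+1}\right) + \varepsilon \le \xi_{\phi_2}^p\!\left(\tfrac{k}{N+1}\right) - 2\varepsilon < n^{-2}\lambda_{k,h}^{\phi_2},
\]
which is the claim. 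A small wrinkle is that membership in $I$ only forces $k/(N+1)$ into a slightly enlarged interval; one handles this by first shrinking $I$ to a slightly smaller closed subinterval on which the gap is still bounded below, or by noting that $\xi_{\phi_i}^p$ are uniformly continuous so the $\varepsilon$-fattening of the quantile interval still sees a positive gap for $n$ large.

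The main obstacle I anticipate is the bookkeeping around outliers and the precise equivalence between ``$n^{-2}\lambda_{k,h}^{\phi_i} \in I$'' and ``$k/(N+1)$ lies in the quantile interval $J$''. The clean statement of Corollary \ref{cl1} and Theorem \ref{th1} both assume absence of outliers, whereas IgA genuinely has $OUT(p,n) = 2\lfloor (p-1)/2\rfloor$ of them; so I would need to invoke (or re-derive) the uniform approximation $\sup_{k \in \mathcal{I}(p,n)} |n^{-2}\lambda_{k,h}^{\phi} - \xi_\phi^p(k/(N+1))| \to 0$ restricted to non-outlier indices, and argue that an eigenvalue lying in $I \subset R_\omega$ is automatically a non-outlier for $n$ large. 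This uses that outliers are, by definition, outside $R_\omega$, and $I$ is a closed interval inside the (bounded) range; combined with the uniform convergence, an index with eigenvalue in $I$ cannot be among the $O(1)$ outlier indices once $n$ is large. Apart from this, the argument is essentially ``uniform convergence of a monotone-rearranged quantile function + a compactness-produced spectral gap,'' and the rest is routine.
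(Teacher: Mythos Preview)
Your proposal is correct and rests on the same two ingredients as the paper: the uniform approximation of the (non-outlier) eigenvalues by samples of the monotone rearrangement (Corollary~\ref{cl1}, extended to $\mathcal{I}(p,n)$) together with Proposition~\ref{stefa}, and a positive minimum gap coming from compactness of $I$. The execution, however, is dual to the paper's. You work on the ``quantile'' side, comparing $n^{-2}\lambda_{k,h}^{\phi_i}$ to $\xi_{\phi_i}^p(k/(N+1))$ and translating the hypothesis into $\xi_{\phi_1}^p<\xi_{\phi_2}^p$ on an interval $J=(\xi_{\phi_1}^p)^{-1}(I)$; the paper instead works on the ``CDF'' side, applying $\Psi_{\phi_i}^{\sqrt{\;}}$ to the eigenvalues. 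Concretely, it first proves a short lemma: setting $z_{k,h}^{\phi_i}:=\Psi_{\phi_i}^{\sqrt{\;}}\!\big(\sqrt{n^{-2}\lambda_{k,h}^{\phi_i}}\big)$, both sequences converge uniformly to $k\pi/(N+1)$, hence $\max_k|z_{k,h}^{\phi_1}-z_{k,h}^{\phi_2}|\to 0$. Then, with $\varepsilon=\min_{y\in I}\big(\Psi_{\phi_1}^{\sqrt{\;}}(\sqrt{y})-\Psi_{\phi_2}^{\sqrt{\;}}(\sqrt{y})\big)>0$, a contradiction argument gives: if $n^{-2}\lambda_{k,h}^{\phi_1}\ge n^{-2}\lambda_{k,h}^{\phi_2}$ with both in $I$, monotonicity of $\Psi_{\phi_i}^{\sqrt{\;}}$ and the hypothesis force $|z_{k,h}^{\phi_1}-z_{k,h}^{\phi_2}|\ge\varepsilon$. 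The payoff of the paper's packaging is that the ``wrinkle'' you identify---controlling where $k/(N+1)$ lands relative to $J$ and ensuring the $\xi$-gap persists on an $\varepsilon$-fattening---simply does not arise: both eigenvalues are in $I$ by assumption, so the $\Psi$-gap applies directly at those points, with no need to pass through $J$ at all. Your route works but requires the extra continuity/shrinking step you sketch; the paper's dual formulation avoids it cleanly. (Minor note: an eigenvalue lying in $I\subset R_\omega$ is a non-outlier \emph{by definition}, not merely for $n$ large.)
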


\begin{proof}
We will begin by proving the following lemma:
\begin{lemma}\label{theo:lemma-ditribution-eig-1}
For all $n \in \mathbb{N}^*$ and $k \in \mathcal{I}(n,p)$, there exist unique $z_{k,h}^{\phi^1}$ and $z_{k,h}^{\phi^2}$ in the interval $[0, \pi]$ such that:
$$\sqrt{n^{-2}\lambda_{k,h}^{\phi_1}}= \left(\Psi^{\sqrt{\;}}_{\phi_1}\right)^{-1}(z_{k,h}^{\phi_1}),\; \sqrt{n^{-2}\lambda_{k,h}^{\phi_1}}= \left(\Psi^{\sqrt{\;}}_{\phi_1}\right)^{-1}(z_{k,h}^{\phi_1})$$
and 
$$
\lim_{n\rightarrow +\infty}\; \max_{k \in \mathcal{I}(p,n)} \left| z_{k,h}^{\phi_1}-   z_{k,h}^{\phi_2}  \right|=0.
$$
\end{lemma}

\begin{proof} (of Lemma \ref{theo:lemma-ditribution-eig-1}).
Let $n \in \mathbb{N}^*$ and $k \in \mathcal{I}(n,p)$. Since $n^{-2}\lambda_{k,h}^{\phi_1} \in Rg (\omega_{\phi_1}^p )$, $n^{-2}\lambda_{k,h}^{\phi_2} \in Rg (\omega_{\phi_2}^p )$ and for $i=1,2$, $\left(\Psi^{\sqrt{\;}}_{\phi_i}\right)^{-1}$ is strictly increasing , there exist unique $z_{k,h}^{\phi^1}$ and $z_{k,h}^{\phi^2}$ in the interval $[0, \pi]$ such that:
$$
\sqrt{n^{-2}\lambda_{k,h}^{\phi_1}}= \left(\Psi^{\sqrt{\;}}_{\phi_1}\right)^{-1}(z_{k,h}^{\phi_1}),
$$
and 
$$
\sqrt{n^{-2}\lambda_{k,h}^{\phi_2}}= \left(\Psi^{\sqrt{\;}}_{\phi_2}\right)^{-1}(z_{k,h}^{\phi_2}).
$$
Owing to Discrete Weyl's law \ref{cl1} together with Proposition \ref{stefa} and the fact that 
$$
Rg\left(\sqrt{\omega_{\phi_i}^p}\right)=\left[0,\max\left\{ \frac{\sqrt{e_p(\pi)}}{\phi'_i(0)},\frac{\sqrt{e_p(\pi)}}{\phi'_i(1)}\right\}\right], \quad i \in \{1, 2 \},
$$
we can immediately conclude
\begin{equation}\label{eq:lemma-ditribution-eig-1}
\lim_{n \rightarrow +\infty}\;\max_{k \in \mathcal{I}(p,n)}\left|    \sqrt{n^{-2}\lambda_{k,h}^{\phi_i}}   -  \left(\Psi^{\sqrt{\;}}_{\phi_i}\right)^{-1}\left(\frac{k\pi}{N+1}\right)\right|= 0,\quad i \in \{1,2 \}.
\end{equation}
Using the mean value theorem and Proposition \ref{stefa}, we can further deduce
$$
\max_{k \in \mathcal{I}(p,n)}\left|  z_{k,h}^{\phi_i} -\frac{k\pi}{N+1} \right |\leq \left| \sup_{y \geq 0} \left(\Psi^{\sqrt{\;}}_{\phi_i}\right)'(y)\right| \max_{k \in \mathcal{I}(p,n)}\left|  \sqrt{n^{-2}\lambda_{k,h}^{\phi_i}}   -  \left(\Psi^{\sqrt{\;}}_{\phi_i}\right)^{-1}\left(\frac{k\pi}{N+1}\right) \right |, \quad i \in \{1,2 \},
$$
which concludes the proof of Lemma \ref{theo:lemma-ditribution-eig-1} using \eqref{eq:lemma-ditribution-eig-1}.
\end{proof}
Now, we can proceed to the proof of the theorem. Let $(z_{k,h}^{\phi^1})$ and $(z_{k,h}^{\phi^2})$ be the two sequences constructed in Lemma \ref{theo:lemma-ditribution-eig-1}, and let $\varepsilon = \min_{y\in I} \left( \Psi_{\phi_1}^{\sqrt{\;}}(\sqrt{y})- \Psi_{\phi_2}^{\sqrt{\;}}(\sqrt{y})\right)$. Note that the continuity of $y \mapsto \Psi_{\phi_i}^{\sqrt{\;}}(y)$, $i=1,2$, ensures that $\varepsilon > 0$, hence there exists $n^0 \in \mathbb{N}^*$, such that:
\begin{equation}\label{eq:lemma-ditribution-eig-2}
\max_{k \in \mathcal{I}(p,n)} \left| z_{k,h}^{\phi_1}-   z_{k,h}^{\phi_2}  \right|< \frac{\varepsilon}{2}, \quad \forall n \geq n^0.
\end{equation}
Now, let $n \geq n^0$ and $k \in \mathbb{N}^*$ such that $n^{-2}\lambda_{k,h}^{\phi_1},n^{-2}\lambda_{k,h}^{\phi_2}\in I$. To conclude the proof, it is sufficient to prove that
$$
n^{-2}\lambda_{k,h}^{\phi_1}<n^{-2}\lambda_{k,h}^{\phi_2}.
$$
In fact, if we suppose the contrary $\left(n^{-2}\lambda_{k,h}^{\phi_1}\geq n^{-2}\lambda_{k,h}^{\phi_2}\right)$, we obtain
$$ 
\left| z_{k,h}^{\phi_1}-   z_{k,h}^{\phi_2}  \right|= \left| \Psi_{\phi_1}^{\sqrt{\;}}\left(\sqrt{n^{-2}\lambda_{k,h}^{\phi_1} }\right)-\Psi_{\phi_2}^{\sqrt{\;}}\left(\sqrt{n^{-2}\lambda_{k,h}^{\phi_2}} \right)\right|<\frac{\varepsilon}{2},
$$
and
$$
\Psi_{\phi_1}^{\sqrt{\;}}\left(\sqrt{n^{-2}\lambda_{k,h}^{\phi_1}} \right)\geq \Psi_{\phi_1}^{\sqrt{\;}}\left(\sqrt{n^{-2}\lambda_{k,h}^{\phi_2}} \right)\geq \Psi_{\phi_2}^{\sqrt{\;}}\left(\sqrt{n^{-2}\lambda_{k,h}^{\phi_2}} \right), \quad \Psi_{\phi_2}^{\sqrt{\;}}\left(\sqrt{n^{-2}\lambda_{k,h}^{\phi_2}} \right)\leq \Psi_{\phi_2}^{\sqrt{\;}}\left(\sqrt{n^{-2}\lambda_{k,h}^{\phi_1}} \right).
$$
This yields
$$
\left| z_{k,h}^{\phi_1}-   z_{k,h}^{\phi_2}  \right|\geq \Psi_{\phi_1}^{\sqrt{\;}}\left(\sqrt{n^{-2}\lambda_{k,h}^{\phi_1}} \right)-\Psi_{\phi_2}^{\sqrt{\;}}\left(\sqrt{n^{-2}\lambda_{k,h}^{\phi_1}} \right)\geq  \varepsilon, 
$$
which contradicts \eqref{eq:lemma-ditribution-eig-2}. This concludes the proof.
\end{proof}

The previous Theorem \ref{tdis1} established a crucial relationship: it demonstrated that the ordering of eigenvalues is precisely the inverse of the ordering of the functions $\Psi_{\phi_1}^{\sqrt{\;}}$ and $\Psi_{\phi_2}^{\sqrt{\;}}$. However, it's important to note that accessing and manipulating the functions $\Psi_{\phi_1}^{\sqrt{\;}} $ and $\Psi_{\phi_2}^{\sqrt{\;}} $ can be complex and challenging in practice. Therefore, there is a need for a more general relationship that connects the order of reparametrizations to the order of the associated eigenvalues.

The following theorem provides a solution to this problem. It demonstrates that it is indeed possible to exert control over the distribution of eigenvalues through reparametrization, offering a valuable tool for managing the behavior of eigenvalues in a more tractable manner.

\begin{theorem} \label{dis1}
Let $\phi_1, \phi_2 \in \mathbf{C}_{[0,1]}$ that are strictly convex, and  $\phi_1'(0) = \phi_2'(0)$. Suppose that the first zero, denoted as $x_0 \in (0, 1)$, of the function $\phi_1' - \phi_2'$ is not an accumulation point. Additionally, assume that $\phi_1'(x) \geq \phi_2'(x)$ for all $x \in [0, x_0]$.

Then, for any $y_0$ and $y_1$ in the open interval $\left(\frac{e_p(\pi)}{(\phi_1'(x_0))^2}, \frac{e_p(\pi)}{(\phi_1'(0))^2}\right)$ with $y_0 < y_1$, it holds
$$ 
\Psi_{\phi_1}^{\sqrt{\;}}(\sqrt{y}) > \Psi_{\phi_2}^{\sqrt{\;}}(\sqrt{y}), \quad \forall y \in [y_0, y_1].
$$
\end{theorem}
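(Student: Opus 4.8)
The plan is to reduce the desired inequality $\Psi_{\phi_1}^{\sqrt{\;}}(\sqrt{y}) > \Psi_{\phi_2}^{\sqrt{\;}}(\sqrt{y})$ for $y\in[y_0,y_1]$ to a pointwise comparison of the level sets defining $\Psi_{\phi_i}^{\sqrt{\;}}$. By definition,
$$
\Psi_{\phi_i}^{\sqrt{\;}}(\sqrt{y}) = \mu_2\left(\left\{(x,\theta)\in[0,1]\times[0,\pi]:\; \frac{e_p(\theta)}{(\phi_i'(x))^2}\leq y\right\}\right).
$$
Since $e_p$ is continuous, nonnegative and strictly increasing on $[0,\pi]$ (by the corollary on $e_p$), for each fixed $x$ the condition $e_p(\theta)\leq y\,(\phi_i'(x))^2$ is equivalent to $\theta \leq e_p^{-1}\!\big(\min\{y(\phi_i'(x))^2, e_p(\pi)\}\big)$, with the understanding that the whole fiber $[0,\pi]$ is included when $y(\phi_i'(x))^2 \geq e_p(\pi)$. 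Writing $h_y^i(x)$ for this fiber length, Fubini gives $\Psi_{\phi_i}^{\sqrt{\;}}(\sqrt{y}) = \int_0^1 h_y^i(x)\,dx$, so it suffices to show $\int_0^1 h_y^1(x)\,dx > \int_0^1 h_y^2(x)\,dx$.

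**Key steps.** First I would use the hypothesis $\phi_1'(x)\geq \phi_2'(x)$ on $[0,x_0]$, together with the fact that $e_p^{-1}$ is increasing, to conclude $h_y^1(x)\geq h_y^2(x)$ pointwise for $x\in[0,x_0]$: a larger $\phi_i'(x)$ enlarges the admissible $\theta$-range. Second, for $x\in[x_0,1]$ I would exploit the restriction $y<\frac{e_p(\pi)}{(\phi_1'(x_0))^2}$ together with strict convexity of $\phi_1$ (hence $\phi_1'$ strictly increasing, so $\phi_1'(x)\geq \phi_1'(x_0)$ for $x\geq x_0$) to show $y(\phi_1'(x))^2 \geq y(\phi_1'(x_0))^2$ — wait, that goes the wrong way; rather I use that $y(\phi_1'(x_0))^2 < e_p(\pi)$, so near $x_0$ the fiber is not yet full, but for $x$ large enough $\phi_1'(x)$ grows and the fiber saturates to $\pi$, which is the maximal possible value, hence $h_y^1(x)\geq h_y^2(x)$ there too (since $h_y^2(x)\leq\pi$ trivially). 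So on all of $[0,1]$ we have $h_y^1\geq h_y^2$. Third, to upgrade to a strict inequality for the integral, I would localize: on a small interval $[0,\eta]$ with $\eta<x_0$, both $\phi_1'(x)^2 y$ and $\phi_2'(x)^2 y$ lie strictly between $0$ and $e_p(\pi)$ for $y\in[y_0,y_1]$ (because $y_1<\frac{e_p(\pi)}{(\phi_1'(0))^2}=\frac{e_p(\pi)}{(\phi_2'(0))^2}$ and by continuity this persists on a neighborhood of $0$), so both fibers are in the "interior" regime where $h_y^i(x)=e_p^{-1}(y(\phi_i'(x))^2)$ depends strictly monotonically on $\phi_i'(x)$. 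Since $x_0$ is not an accumulation point of zeros of $\phi_1'-\phi_2'$ and $\phi_1'(0)=\phi_2'(0)$, the difference $\phi_1'-\phi_2'$ is not identically zero near $0$ (otherwise $0$ would be in a zero-interval and $x_0$ could not be the first isolated zero), so $\phi_1'(x)>\phi_2'(x)$ on a subinterval of positive measure, forcing $h_y^1(x)>h_y^2(x)$ there and hence the strict inequality for the integral.

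**Main obstacle.** The delicate point is the strictness argument — ensuring $\phi_1'-\phi_2'$ is genuinely positive on a set of positive measure near $0$, rather than merely nonnegative with the inequality saturating. This is where the hypothesis that $x_0$ is "not an accumulation point" of zeros is essential, and I would need to argue carefully that $\phi_1' \not\equiv \phi_2'$ on any initial segment $[0,\eta]$: if it were, every point of $[0,\eta]$ would be a zero, contradicting either that $x_0\in(0,1)$ is the \emph{first} zero or that it is isolated. A secondary technical nuisance is handling the boundary between the "interior" and "saturated" regimes for the $\theta$-fibers cleanly (the set where $y(\phi_i'(x))^2 = e_p(\pi)$ has measure zero, so it does not affect the integrals, but this should be noted). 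Once the pointwise domination $h_y^1\geq h_y^2$ everywhere plus strict domination on a positive-measure set is established, the conclusion follows by integrating, uniformly over the compact interval $y\in[y_0,y_1]$ by a compactness/continuity argument on the gap $\Psi_{\phi_1}^{\sqrt{\;}}(\sqrt{y})-\Psi_{\phi_2}^{\sqrt{\;}}(\sqrt{y})$.
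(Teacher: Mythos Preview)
Your Fubini strategy---slicing by $x$ and comparing the $\theta$-fiber lengths $h_y^i(x)=\mu_1\{\theta:e_p(\theta)\le y(\phi_i'(x))^2\}$---is correct and is a genuinely different route from the paper's. The paper instead slices by $\theta$, rewrites $\Psi_{\phi_i}^{\sqrt{\;}}(\sqrt{y})$ as $\pi-\int_{S_i(y)}(\phi_i')^{-1}\!\big(\sqrt{e_p(\theta)/y}\,\big)\,d\theta$ over a $\theta$-set that it shows is common to both $i$, and then argues by contradiction: if $\Psi_{\phi_1}^{\sqrt{\;}}(\sqrt{y})\le\Psi_{\phi_2}^{\sqrt{\;}}(\sqrt{y})$, the integrand comparison forces $(\phi_1')^{-1}=(\phi_2')^{-1}$ on $S(y)$, producing a zero of $\phi_1'-\phi_2'$ strictly before $x_0$. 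Your direct pointwise comparison avoids both the inverse-function manipulation and the contradiction, and is arguably cleaner.

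That said, you have misread the constraint on $y$. The interval for $y$ has \emph{lower} endpoint $\dfrac{e_p(\pi)}{(\phi_1'(x_0))^2}$, so $y(\phi_1'(x_0))^2>e_p(\pi)$---the opposite of what you wrote. Your treatment of $x\in[x_0,1]$ then breaks: you claim the fiber is ``not yet full'' near $x_0$ and only saturates for $x$ large, which would leave an interval on which you have no information about the sign of $\phi_1'-\phi_2'$ and hence no way to compare $h_y^1$ with $h_y^2$. With the correct inequality the region is trivial: since $\phi_1'(x_0)=\phi_2'(x_0)$ and both $\phi_i'$ are increasing, $y(\phi_i'(x))^2\ge y(\phi_i'(x_0))^2>e_p(\pi)$ for every $x\ge x_0$, so $h_y^1(x)=h_y^2(x)=\pi$ on all of $[x_0,1]$. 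Your strictness argument can also be streamlined: ``$x_0$ is the first zero in $(0,1)$'' already forces $\phi_1'>\phi_2'$ on all of $(0,x_0)$, and the upper bound $y(\phi_i'(0))^2<e_p(\pi)$ keeps both fibers in the interior regime on a neighbourhood of $0$, so $h_y^1>h_y^2$ on a set of positive measure without any appeal to the accumulation-point hypothesis.
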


\begin{proof}
First, we observe that
$$
Rg(\omega^p_{\phi_1})=Rg(\omega^p_{\phi_2})=\left[0,\frac{e_p(\pi)}{(\phi_1^{'}(0))^2}\right],
$$
which implies that the restrictions of $\Psi_{\phi_1}^{\sqrt{\;}}$ and $\Psi_{\phi_2}^{\sqrt{\;}}$ on the interval $\left(\frac{\sqrt{e_p(\pi)}}{\phi_1'(x_0)}, \frac{\sqrt{e_p(\pi)}}{\phi_1'(0)}\right)$ are well-defined. On the other hand, Roll's theorem ensures the existence of the zero $x_0$ in the open interval $(0,1)$ since $\phi_1(0) - \phi_2(0) = \phi_1(1) - \phi_2(1)$.

Now, let's assume that there exists $y\in [y_0, y_1]$ such that $\Psi_{\phi_1}^{\sqrt{\;}}(\sqrt{y})\leq \Psi_{\phi_2}^{\sqrt{\;}}(\sqrt{y})$. Following an argument similar to that in the reference \cite{bianchi2018spectral}, we can derive the equality
\begin{equation}\label{eq:therem-dist1-eq1}  
\begin{split}
\pi&-\displaystyle\int_{S_1(y)} \left(\phi_1^{'}\right)^{-1}\left(\sqrt{\dfrac{e_p(\theta)}{y}} \right)\;d\theta -\mu_{1}(A_1)
\\
&\leq\pi-\displaystyle\int_{S_2(y)}\left(\phi_2^{'}\right)^{-1}\left(\sqrt{\dfrac{e_p(\theta)}{y}} \right)\;d\theta -\mu_{1}(A_2),
\end{split}
\end{equation}
where
$$
S_i(y)=\left\{ \theta\in[0,\pi]:\quad  \phi_i'(0)\leq \sqrt{\dfrac{e_p(\theta)}{y}} \leq\phi_i'(1)    \right\},
$$
and
$$
A_i(y)=\left\{ \theta\in[0,\pi]:\;\;    \sqrt{\dfrac{e_p(\theta)}{y}}> \phi_i'(1)   \right\},\quad i \in \{1,2\}.
$$

We  claim that
\begin{equation}\label{eq:therem-dist1-eq2}  
S_1(y) = S_2(y) = \left[    e_p^{-1}\left( y\left( \phi_1^{'}(0)   \right)^2 \right),\pi   \right], \quad \text{and } A_1=A_2=\emptyset.
\end{equation}
Indeed, considering that $y\in \left(\frac{e_p(\pi)}{(\phi_1^{'}(x_0))^2},\frac{e_p(\pi)}{(\phi_1^{'}(0))^2}\right)$ together with $\phi_1'(0)=\phi_2'(0)$ and $\phi_1'(x_0)=\phi_2'(x_0)$, we conclude
$$
S_1(y) = S_2(y) = \left\{ \theta\in[0,\pi]:\quad  \phi_1^{'}(0)\leq \sqrt{\dfrac{e_p(\theta)}{y}} \leq\phi_1^{'}(x_0) \right\},
$$
and 
$$
A_1=A_2=\emptyset.
$$
Moreover, since $y(\phi_1'(x_0))^2 > e_p(\pi)$, it is straightforward to see that
\begin{align*}
S_1(y) & = \left\{ \theta\in [0,\pi]: \quad  \phi_1'(0) \leq \sqrt{\dfrac{e_p(\theta)}{y}} \leq\phi_1'(x_0) \right\}\\
& = \left\{ \theta\in [0,\pi]: \quad y\left( \phi_1'(0)   \right)^2 \leq e_p(\theta) \leq y\left( \phi_1'(x_0)   \right)^2 \right\}\\
& = \left[    e_p^{-1}\left( y\left( \phi_1^{'}(0)   \right)^2 \right),\pi   \right],
\end{align*} 
which prove the claim \eqref{eq:therem-dist1-eq2}. In particular, since $0 < y(\phi_1'(0))^2 < e_p(\pi)$, we have $\mu_1\left(S_1(y)\right)>0$, and \eqref{eq:therem-dist1-eq1} together with the fact that $\phi_1'(x) \geq \phi_2'(x)$ for all $x \in [0, x_0]$, yield
\begin{equation}\label{eq:therem-dist1-eq3}
\left(\phi_1'\right)^{-1}\left(\sqrt{\dfrac{e_p(\theta)}{y}} \right) =\left(\phi_2'\right)^{-1}\left(\sqrt{\dfrac{e_p(\theta)}{y}} \right), \quad \forall \theta \in S_1(y).
\end{equation}

To end the proof, it is enough to demonstrate that \eqref{eq:therem-dist1-eq3} contradicts the fact that $x_0 \in (0,1)$ is the first zero of the function $\phi_1'-\phi_2'$. In fact, by the intermediate-value theorem and  \eqref{eq:therem-dist1-eq2}, one can find $x_1 \in (0,x_0)$ and $\theta_0 \in S_1(y)$ such that
$$
\phi_1'(x_1) = \sqrt{\frac{e_p(\theta_0)}{y}},
$$
and, using \eqref{eq:therem-dist1-eq3}, we obtain 
$$
x_1=\left(\phi_1'\right)^{-1}\left( \sqrt{\frac{e_p(\theta_0)}{y}}\right) = \left(\phi_2'\right)^{-1}\left( \sqrt{\frac{e_p(\theta_0)}{y}}\right) = \left(\phi_2'\right)^{-1}\left(\phi_1'(x_1)\right).
$$
Consequently, $\phi_2'(x_1) = \phi_1'(x_1)$, signifying that $x_1$ is a zero for the function $\phi_1'-\phi_2'$. This concludes the proof as $x_1<x_0$.
\end{proof}

 Similarly, the next theorem addresses strictly concave reparametrizations. The proof follows a similar approach to that of Theorem \ref{dis1} and is thus omitted.

\begin{theorem}\label{dis2}
Let $\phi_1, \phi_2 \in \mathbf{C}_{[0,1]}$ that are strictly concave, and  $\phi_1'(1) = \phi_2'(1)$. Suppose that the last zero $x_0 \in (0, 1)$ of the function $\phi_1' - \phi_2'$ is not an accumulation point. Additionally, assume that $\phi_1'(x) \geq \phi_2'(x)$ for all $x \in [x_0, 1]$.

Then, for any $y_0$ and $y_1$ in the open interval $\left(\frac{e_p(\pi)}{(\phi_1'(x_0))^2}, \frac{e_p(\pi)}{(\phi_1'(1))^2}\right)$ with $y_0 < y_1$, it holds
$$ 
\Psi_{\phi_1}^{\sqrt{\;}}(\sqrt{y}) > \Psi_{\phi_2}^{\sqrt{\;}}(\sqrt{y}), \quad \forall y \in [y_0, y_1].
$$
\end{theorem}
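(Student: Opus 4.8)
The plan is to mirror the proof of Theorem \ref{dis1}, making the symmetric replacements dictated by the concave setting: where the convex case anchors the reparametrizations at the left endpoint (using $\phi_1'(0)=\phi_2'(0)$ and the interval $[0,x_0]$), the concave case anchors at the right endpoint (using $\phi_1'(1)=\phi_2'(1)$ and the interval $[x_0,1]$). First I would record that for a strictly concave $\phi\in\mathbf{C}_{[0,1]}$ the derivative $\phi'$ is strictly decreasing, so $\phi'(1)\le\phi'(x)\le\phi'(0)$ and the essential range of $\omega_\phi^p$ is $\left[0,\frac{e_p(\pi)}{(\phi'(1))^2}\right]$; since $\phi_1'(1)=\phi_2'(1)$, the two symbols share the same range, so the restrictions of $\Psi_{\phi_1}^{\sqrt{\;}}$ and $\Psi_{\phi_2}^{\sqrt{\;}}$ to the relevant interval are well defined. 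Rolle's theorem again furnishes a zero of $\phi_1'-\phi_2'$ in $(0,1)$ from $\phi_1(0)-\phi_2(0)=\phi_1(1)-\phi_2(1)$, and $x_0$ is taken to be the \emph{last} such zero.

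Next, arguing by contradiction, I would suppose there is $y\in[y_0,y_1]$ with $\Psi_{\phi_1}^{\sqrt{\;}}(\sqrt{y})\le\Psi_{\phi_2}^{\sqrt{\;}}(\sqrt{y})$ and reproduce the integral inequality of \eqref{eq:therem-dist1-eq1} from \cite{bianchi2018spectral}, now with the sets
$$
S_i(y)=\left\{\theta\in[0,\pi]:\ \phi_i'(1)\le\sqrt{\tfrac{e_p(\theta)}{y}}\le\phi_i'(0)\right\},\qquad
A_i(y)=\left\{\theta\in[0,\pi]:\ \sqrt{\tfrac{e_p(\theta)}{y}}>\phi_i'(0)\right\}.
$$
Because $y$ lies strictly between $\frac{e_p(\pi)}{(\phi_1'(x_0))^2}$ and $\frac{e_p(\pi)}{(\phi_1'(1))^2}$, and $\phi_1'(1)=\phi_2'(1)$, $\phi_1'(x_0)=\phi_2'(x_0)$, one gets $S_1(y)=S_2(y)=\left[e_p^{-1}\!\big(y(\phi_1'(1))^2\big),\pi\right]$ (a set of positive measure, since $0<y(\phi_1'(1))^2<e_p(\pi)$) and $A_1=A_2=\emptyset$, exactly as in the claim \eqref{eq:therem-dist1-eq2}. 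Feeding this back, the inequality collapses to $(\phi_1')^{-1}\!\big(\sqrt{e_p(\theta)/y}\big)=(\phi_2')^{-1}\!\big(\sqrt{e_p(\theta)/y}\big)$ for all $\theta\in S_1(y)$.

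Finally, I would convert this equality of inverse derivatives into a contradiction with the minimality (here: maximality) of $x_0$. Using $\phi_1'(x)\ge\phi_2'(x)$ on $[x_0,1]$ and that both $(\phi_i')^{-1}$ are strictly decreasing, the values $\sqrt{e_p(\theta)/y}$ for $\theta\in S_1(y)$ correspond via $(\phi_1')^{-1}$ to points $x_1\in(x_0,1)$; the intermediate-value theorem produces such an $x_1$ with $\phi_1'(x_1)=\sqrt{e_p(\theta_0)/y}$ for some $\theta_0\in S_1(y)$, and then $x_1=(\phi_2')^{-1}(\phi_1'(x_1))$ forces $\phi_2'(x_1)=\phi_1'(x_1)$, i.e.\ $x_1>x_0$ is a zero of $\phi_1'-\phi_2'$, contradicting that $x_0$ is the last zero. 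The only genuinely delicate point — and the one I would check carefully rather than wave through — is the orientation of all the inequalities after the concave substitution: monotonicity of $\phi'$, of $(\phi')^{-1}$, and of $e_p$ all flip or interact differently than in the convex case, and the role of "first zero / left neighborhood" must be consistently replaced by "last zero / right neighborhood" throughout, including in verifying that the candidate $x_1$ indeed lands in $(x_0,1)$ and not outside it. Since the paper states this proof is "similar" and "omitted," presenting the above as a sketch with those sign-bookkeeping points flagged is the appropriate level of detail.
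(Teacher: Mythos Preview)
Your proposal is correct and matches exactly what the paper intends: it explicitly states that the proof of Theorem~\ref{dis2} ``follows a similar approach to that of Theorem~\ref{dis1} and is thus omitted,'' and your sketch performs precisely the symmetric substitutions (right endpoint, last zero, decreasing $\phi'$) that this entails. The one point worth noting is that in the concave case the integral representation of $\Psi_\phi^{\sqrt{\;}}$ takes the form $\mu_1(B_i)+\int_{S_i}(\phi_i')^{-1}(\cdot)\,d\theta$ rather than $\pi-\int_{S_i}(\phi_i')^{-1}(\cdot)\,d\theta-\mu_1(A_i)$, so the sign of the integral term flips---but since you already flag the orientation bookkeeping as the delicate step, and since this flip combines correctly with $(\phi_1')^{-1}\ge(\phi_2')^{-1}$ on the relevant range to force equality, the argument goes through as you describe.
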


A family of examples of such convex reparametrization functions is 
\begin{equation}\label{eq:reparametrization-convex-example}
\phi_{a,b}=e^{ax+b}-e^b+(\gamma-ae^b)x,
\end{equation}
where $a >0$, $0<\gamma<1$ and $b$ is given by 
$$
b=-\ln\left(\frac{e^a-(a+1)}{1-\gamma}\right).
$$
Notably, these reparametrization functions satisfy the conditions specified in Theorem \ref{tdis1}, including the property $\phi_{a,b}'(0) = \gamma$.  

In the case of strictly concave reparametrization, the family \eqref{eq:reparametrization-convex-example} is replaced by
\begin{equation}\label{eq:reparametrization-concave-example}
\phi_{a,b}=\ln(ax+b)-\ln(b)+\left(\gamma-\frac{a}{a+b}\right)x.
\end{equation}
Here, the parameters are similar: $a > 0$, $0 < \gamma < 1$. However, the value of $b$ is calculated as $b:=a/x^*$, where $x^* \in (0,1)$ represents the unique solution to the equation
$$
\gamma=1-\left(\ln(x^*+1)-\frac{x^*}{x^*+1}\right).
$$ 
Once again, these reparametrization functions satisfy the hypotheses of Theorem \ref{dis2}, and they specifically verify $\phi_{a,b}'(1)=\gamma$.

At this stage, we have demonstrated the feasibility of ordering the families of eigenvalues based on the corresponding reparametrizations order. In the following proposition, our focus will be on the distribution of packed eigenvalues and how the convexity of the symbol influences the behavior of these eigenvalues. More precisely,  we will show that the number of eigenvalues can indeed be ordered, and this order depends  on the convexity of the symbol. In the case of a strictly convex symbol, the spectrum shifts to the right, whereas a strictly concave symbol results in a leftward shift.

It is important to mention that this proposition does not necessitate the assumption of the average gap condition, and the results are presented within the context of a reparametrization in $\mathbf{C}_{[0,1]}$ to align with the scope of our paper.

\begin{proposition}[Distribution of pack-eigenvalues]\label{cl2.5}
Let $p \geq 1$ and $\phi \in \mathbf{C}_{[0,1]}$. Suppose there exists a closed interval $I$ in which $\left( \sqrt{\xi_{\phi}^p} \right)^{-1} $ is of class $\mathcal{C}^2(I)$. Let $\{y_0, y_1, \ldots, y_r\}$ be a uniform discretization of $I$. For sufficiently large $n$, the following hold
\begin{itemize}
\item[1.] If $\left( \sqrt{\xi_{\phi}^p} \right)^{-1}$ is strictly convex on $I$, then for all $i \in \{1, 2, \ldots, r-1\}$:
\begin{equation*}
\left|\left\{ k=1,\dots,N: \quad y_{i-1}< \sqrt{n^{-2}  \lambda_{k,h}}\leq y_{i}     \right\}\right|< \left|\left\{ k=1,\dots,N: \quad y_{i}< \sqrt{n^{-2}\lambda_{k,h}}\leq y_{i+1}     \right\}\right|.
\end{equation*}

\item[2.] If $\left( \sqrt{\xi_{\phi}^p} \right)^{-1}$ is strictly concave on $I$, then for all $i \in \{1, 2, \ldots, r-1\}$:
\begin{equation*}
\left|\left\{ k=1,\dots,N: \quad y_{i-1}< \sqrt{n^{-2}\lambda_{k,h}}\leq y_{i}     \right\}\right|> \left|\left\{ k=1,\dots,N: \quad y_{i}<\sqrt{ n^{-2}\lambda_{k,h}}\leq y_{i+1}     \right\}\right|.
\end{equation*}
\end{itemize}
\end{proposition}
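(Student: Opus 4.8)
The plan is to count eigenvalues in the intervals $(y_{i-1}, y_i]$ and $(y_i, y_{i+1}]$ by appealing to the discrete Weyl's law (Theorem~\ref{th1}) and its corollaries applied to the monotone rearrangement $\sqrt{\xi_\phi^p}$. Since $\sqrt{\omega_\phi^p}$ is continuous, $\Psi_\phi^{\sqrt{\;}}$ and $\xi_\phi^p$ are continuous, and moreover (as noted before Proposition~\ref{stefa}) $\Psi_\phi^{\sqrt{\;}}$ is strictly increasing on the range of $\sqrt{\omega_\phi^p}$, so it is invertible there with inverse $\left(\sqrt{\xi_\phi^p}\right)^{-1}$. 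First I would use the corollary to discrete Weyl's law to write, for $t$ in the relevant range,
$$
\lim_{n\to+\infty}\frac{\left|\left\{k=1,\dots,N:\;\sqrt{n^{-2}\lambda_{k,h}}\le t,\;\text{non-outlier}\right\}\right|}{N}
= \frac{1}{\pi}\,\Psi_\phi^{\sqrt{\;}}(t),
$$
and since outliers are $o(N)$ they do not affect the asymptotics. Subtracting the values at consecutive grid points $y_{i-1}<y_i$ gives that the (normalized) count in $(y_{i-1},y_i]$ converges to $\tfrac{1}{\pi}\bigl(\Psi_\phi^{\sqrt{\;}}(y_i)-\Psi_\phi^{\sqrt{\;}}(y_{i-1})\bigr)$, and similarly on $(y_i,y_{i+1}]$.

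Next I would translate the strict convexity/concavity hypothesis on $\left(\sqrt{\xi_\phi^p}\right)^{-1}$ into a comparison of these increments of $\Psi_\phi^{\sqrt{\;}}$. Because $\left(\sqrt{\xi_\phi^p}\right)^{-1} = \Psi_\phi^{\sqrt{\;}}$ on $I$ (up to the factor $\pi$ in the definition of $\xi$; one must track this constant carefully), strict convexity of $\left(\sqrt{\xi_\phi^p}\right)^{-1}$ is equivalent to strict convexity of $\Psi_\phi^{\sqrt{\;}}$ on $I$. For a strictly convex function sampled on a uniform grid $\{y_0,\dots,y_r\}$ with step $\Delta = y_i - y_{i-1}$, the consecutive first differences are strictly increasing:
$$
\Psi_\phi^{\sqrt{\;}}(y_i)-\Psi_\phi^{\sqrt{\;}}(y_{i-1}) < \Psi_\phi^{\sqrt{\;}}(y_{i+1})-\Psi_\phi^{\sqrt{\;}}(y_i),
$$
which is the standard fact that the slopes of secant lines of a convex function increase. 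This gives a strict inequality between the two limiting densities, say a gap $c_i > 0$. The concave case is identical with the inequality reversed.

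Finally I would pass from the limiting inequality to the claimed inequality for all sufficiently large $n$: if the normalized count on $(y_{i-1},y_i]$ tends to $a_i$ and that on $(y_i,y_{i+1}]$ tends to $b_i$ with $a_i < b_i$, then for $n$ large enough the (un-normalized) counts themselves satisfy the strict inequality, because both normalized counts are eventually within $\tfrac{1}{3}(b_i-a_i)$ of their limits. Since there are only finitely many indices $i\in\{1,\dots,r-1\}$, one takes the maximum of the finitely many thresholds $n_0(i)$ to get a single $n$ beyond which all the inequalities hold simultaneously. The main obstacle I anticipate is purely bookkeeping: making sure the normalization constant $\pi$ relating $\xi$, $\Psi$ and $\Psi_\phi^{\sqrt{\;}}$ is handled consistently so that ``$\left(\sqrt{\xi_\phi^p}\right)^{-1}$ strictly convex'' is correctly identified with ``$\Psi_\phi^{\sqrt{\;}}$ strictly convex on $I$'', and that the continuity hypotheses needed to invoke Theorem~\ref{th1} and its corollaries (continuity of $\Psi$ and $\xi$, boundedness of $R_\omega$) are all in force on $I$ — all of which hold here since $\sqrt{\omega_\phi^p}$ is continuous on the compact domain $[0,1]\times[0,\pi]$ and $\Psi_\phi^{\sqrt{\;}}$ is strictly monotone on its range. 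A minor secondary point is ruling out that eigenvalues land exactly on the grid points $y_i$ in a way that would spoil the strictness, but since the counts are defined with half-open intervals and the limiting density has no atoms (continuity of $\Psi$), this causes no trouble.
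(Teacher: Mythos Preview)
Your proposal is correct and follows essentially the same route as the paper: invoke the discrete Weyl law to express the normalized eigenvalue counts on $(y_{i-1},y_i]$ as increments of $\left(\sqrt{\xi_\phi^p}\right)^{-1}$, use (strict) convexity to compare consecutive increments on the uniform grid, and then pass from the strict inequality of limits to the strict inequality of counts for large $n$. The only minor difference is bookkeeping for the uniformity in $i$: the paper uses the $\mathcal{C}^2$ hypothesis explicitly, applying the mean value theorem twice and choosing a single $\varepsilon$ governed by $\inf_I\bigl[\left(\sqrt{\xi_\phi^p}\right)^{-1}\bigr]''$, whereas you obtain each $c_i>0$ from strict convexity alone and then take the maximum of the finitely many thresholds $n_0(i)$ --- both are valid, and your version in fact shows that the $\mathcal{C}^2$ assumption is not strictly needed for the conclusion.
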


\begin{proof}
We will focus on proving assertion 1, as the proof of assertion 2. follows a similar argument. For all $i \in \{1, \cdots, r\}$, we apply Discrete Weyl's law \ref{th1} to obtain
\begin{equation} \label{eq:propo-distribution-pack-eigenvalues-1}
\lim_{n\rightarrow  +\infty}\dfrac{\left|\left\{k=1,\dots, N:\quad y_{i-1}<\sqrt{n^{-2} \lambda_{k,h}}\leq y_{i} \right\}\right|}{n}=\left( \sqrt{\xi_{\phi}^p}\right)^{-1}(y_{i})-\left( \sqrt{\xi_{\phi}^p}\right)^{-1}(y_{i-1}),
\end{equation}
and by the mean value theorem, there exist $x_i \in (y_{i-1}, y_i)$ such that
\begin{equation} \label{eq:propo-distribution-pack-eigenvalues-2}
\left( \sqrt{\xi_{\phi}^p}\right)^{-1}(y_{i})-\left( \sqrt{\xi_{\phi}^p}\right)^{-1}(y_{i-1}) = (y_{i}-y_{i-1}) \left[\left(  \sqrt{\xi_{\phi}^p} \right)^{-1}\right]'(x_i) = \frac{y_r-y_0}{r}\left[\left(  \sqrt{\xi_{\phi}^p} \right)^{-1}\right]'(x_i).
\end{equation}
Now, for $i \in \{1, \cdots, r-1\}$, let's fix $\varepsilon$ such that
$$
0 < \varepsilon < \frac{(y_r-y_0) \delta}{2r} \inf_{I} \left[\left( \sqrt{\xi_{\phi}^p}  \right)^{-1}   \right]'', \quad \text{with } \delta:= \inf_{1 \leq i \leq r-1} (x_{i+1} -x_{i} ). 
$$
Using equations \eqref{eq:propo-distribution-pack-eigenvalues-1} and \eqref{eq:propo-distribution-pack-eigenvalues-2}, we deduce that for sufficiently large $n$:
\begin{align*}
& \dfrac{1}{n}\left(\left|\left\{ y_{i-1}<\sqrt{n^{-2} \lambda_{k,h}}\leq y_i       \right\}\right| -\left|\left\{ y_i<\sqrt{n^{-2}  \lambda_{k,h}}\leq y_{i+1}\right\}\right|       \right) \\
&\leq 2\epsilon +\left[  \left( \sqrt{\xi_{\phi}^p}  \right)^{-1}(y_{i})-\left( \sqrt{\xi_{\phi}^p}  \right)^{-1}(y_{i-1})  \right] -  \left[ \left( \sqrt{\xi_{\phi}^p}  \right)^{-1}(y_{i+1})-(\sqrt{\xi_{\phi}^p})^{-1}(y_{i})\right]\\
&=2\epsilon+ \frac{y_r-y_0}{r}\left(   \left[\left( \sqrt{\xi_{\phi}^p}  \right)^{-1}\right]^{'}(x_i) -\left[\left( \sqrt{\xi_{\phi}^p} \right)^{-1}\right]^{'}(x_{i+1})         \right)\\
&\leq 2\epsilon -\frac{y_r-y_0}{r}\inf_{I} \left[\left( \sqrt{\xi_{\phi}^p}  \right)^{-1}\right]'' \;(x_{i+1}-x_i)\\
&\leq 2\epsilon -\frac{(y_r-y_0) \delta}{r} \inf_{I} \left[\left( \sqrt{\xi_{\phi}^p}  \right)^{-1}\right]'' <0.
\end{align*}
This concludes the proof.
\end{proof}

In all the previous findings within this section,  we have solely used the classical Discrete Weyl's law \ref{th1} to analyze the impact of reparametrization on eigenvalues when the average gap condition is satisfied. In the following result, we will prove that the  simple convergence in \ref{th1} becomes uniform when we select a reparametrization within $\mathbf{C}_{[0,1]}$.

\begin{proposition}[Uniform Discrete Weyl’s law]\label{gedis}
Let $\phi\in\mathbf{C}_{[0,1]}$, such that  $n^{-2}L_{\phi,n}^p\sim_{\lambda} \xi^p_{\phi}$. Then the sequence of  functions 
$$
G_n^p(y)= \dfrac{\left| \left\{k=1,\dots, N:\quad \sqrt{n^{-2}\lambda_{k,h}}\leq y\right\}\right|}{N+1},\quad \forall y\in Rg\left(\sqrt{\omega_\phi^p}\right),
$$
converges uniformly, as $n \rightarrow +\infty$, to $\left(\sqrt{\xi_{\phi}^p}\right)^{-1}$. In addition, it holds
\begin{equation}\label{eq:gedis-0}
\lim_{n\longrightarrow +\infty}\dfrac{1}{N-1}\displaystyle\sum_{k=1}^{N-1} k\left( \sqrt{ n^{-2}\lambda_{k+1,h}}-\sqrt{n^{-2}\lambda_{k,h}}\right)=\displaystyle\int_{Rg \left(\sqrt{\omega_\phi^p} \right)} \left(\sqrt{\xi_{\phi}^p}\right)^{-1}(y)\; dy.
\end{equation}
\end{proposition}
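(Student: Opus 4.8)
The proposition has two parts: the uniform convergence $G_n^p\to\bigl(\sqrt{\xi_\phi^p}\bigr)^{-1}$, and the averaged-gap identity \eqref{eq:gedis-0}. I would obtain the first from the (pointwise) Discrete Weyl's law of Theorem \ref{th1} together with a classical monotonicity argument, and then reduce the second to the first by a summation-by-parts step. Throughout, write $g:=\bigl(\sqrt{\xi_\phi^p}\bigr)^{-1}$ and $M:=\max Rg\bigl(\sqrt{\omega_\phi^p}\bigr)=\sqrt{e_p(\pi)}\,\max\{1/\phi'(0),1/\phi'(1)\}$.

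\textbf{Step 1: identify the limit and prove uniform convergence of $G_n^p$.} First I would record that, since $\sqrt{\omega_\phi^p}$ is continuous on the compact set $[0,1]\times[0,\pi]$ and (as recalled before Proposition \ref{stefa}) $\Psi_\phi^{\sqrt{\;}}$ is strictly increasing on $Rg\bigl(\sqrt{\omega_\phi^p}\bigr)=[0,M]$, the function $\sqrt{\xi_\phi^p}$ is the genuine inverse of $\tfrac1\pi\Psi_\phi^{\sqrt{\;}}$, so $g=\tfrac1\pi\Psi_\phi^{\sqrt{\;}}$; by Proposition \ref{stefa} this $g$ is $\mathcal{C}^1$ and strictly increasing on $[0,M]$ with $g(0)=0$ and $g(M)=1$. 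Applying Theorem \ref{th1} to $n^{-2}L_{\phi,n}^p\sim_\lambda\omega_\phi^p$ at the continuous function $\Psi_\phi^{\sqrt{\;}}$, and using $\bigl|\{k:\sqrt{n^{-2}\lambda_{k,h}}\le y\}\bigr|=\bigl|\{k:n^{-2}\lambda_{k,h}\le y^2\}\bigr|$ together with $\tfrac{N}{N+1}\to1$, gives $G_n^p(y)\to g(y)$ for every $y\in[0,M]$. Since each $G_n^p$ is nondecreasing in $y$ and $g$ is continuous and nondecreasing on the compact interval $[0,M]$, this pointwise convergence is automatically uniform: given $\varepsilon>0$ pick $0=y_0<\dots<y_m=M$ with $g(y_{j+1})-g(y_j)<\varepsilon/2$ by uniform continuity of $g$, take $n$ large enough that $|G_n^p(y_j)-g(y_j)|<\varepsilon/2$ for all $j$, and for $y\in[y_j,y_{j+1}]$ sandwich $G_n^p(y_j)\le G_n^p(y)\le G_n^p(y_{j+1})$ and $g(y_j)\le g(y)\le g(y_{j+1})$ to conclude $|G_n^p(y)-g(y)|<\varepsilon$. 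This is the first assertion.

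\textbf{Step 2: summation by parts for the averaged gap.} Set $s_k:=\sqrt{n^{-2}\lambda_{k,h}}$, nondecreasing in $k$. Abel summation yields
\[
\sum_{k=1}^{N-1}k\,(s_{k+1}-s_k)=(N-1)s_N-\sum_{k=1}^{N-1}s_k,
\]
hence the left-hand side of \eqref{eq:gedis-0} equals $s_N-\tfrac1{N-1}\sum_{k=1}^{N-1}s_k$. For the first term, Step 1 forces $s_N\to M$: for any $y<M$ we have $G_n^p(y)\to g(y)<1$, so for large $n$ at least one $s_k$ exceeds $y$, giving $\liminf_n s_N\ge M$; in the outlier-free setting every eigenvalue of $n^{-2}L_{\phi,n}^p$ lies in $R_{\omega_\phi^p}=[0,M^2]$, so $s_N\le M$ and thus $s_N\to M$. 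For the averaged term I would write the empirical mean as a layer-cake integral: with $\widetilde G_n(y):=\tfrac{|\{k:s_k\le y\}|}{N}=\tfrac{N+1}{N}G_n^p(y)$ one has $\tfrac1N\sum_{k=1}^N s_k=\int_0^M\bigl(1-\widetilde G_n(y)\bigr)\,dy$ (the integrand vanishing for $y\ge M$ since $s_k\le M$), and since $\widetilde G_n\to g$ uniformly on $[0,M]$ this tends to $M-\int_0^M g(y)\,dy$; as $s_N$ is bounded and $\tfrac{N}{N-1}\to1$, also $\tfrac1{N-1}\sum_{k=1}^{N-1}s_k\to M-\int_0^M g(y)\,dy$. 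Combining, $s_N-\tfrac1{N-1}\sum_{k=1}^{N-1}s_k\to M-\bigl(M-\int_0^M g\bigr)=\int_0^M g(y)\,dy=\int_{Rg(\sqrt{\omega_\phi^p})}\bigl(\sqrt{\xi_\phi^p}\bigr)^{-1}(y)\,dy$, which is \eqref{eq:gedis-0}.

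\textbf{Main obstacle.} The analytic heart is Step 1: upgrading the pointwise Weyl limit to a uniform one. This works precisely because $G_n^p$ is monotone in $y$ and the limit $g$ is continuous (indeed $\mathcal{C}^1$, via Proposition \ref{stefa}) on a compact interval; without these two ingredients the argument collapses. Once uniform convergence is available, Step 2 is essentially bookkeeping, with one point needing real care: the boundary behaviour $\sqrt{n^{-2}\lambda_{N,h}}\to M$. Unlike every Cesàro-type average in the proof, the single largest eigenvalue is not insensitive to spurious modes, so it is exactly here that the absence of outliers (equivalently, that the $OUT(p,n)$ outliers — bounded in number — have been discarded, so the top retained eigenvalue is the relevant one) must be invoked; reconciling this with the $o(N)$ outlier count in the rest of the estimates is the delicate step I would spend the most effort on.
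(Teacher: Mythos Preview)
Your argument is correct and takes a genuinely different route from the paper's, particularly in Step~1. The paper proves the uniform convergence by an explicit interval decomposition: it partitions $Rg(\sqrt{\omega_\phi^p})$ according to the non-outlier eigenvalues $\sqrt{n^{-2}\lambda_{k,h}}$, notes that $G_n^p$ is constant on each piece, and then controls the three resulting terms $T_1,T_2,T_3$ via the Lipschitz bound on $(\sqrt{\xi_\phi^p})^{-1}$ from Proposition~\ref{stefa} together with the sampling estimate of Corollary~\ref{cl1}. Your approach is more elementary and more robust: you only use the \emph{pointwise} Weyl law (first part of Theorem~\ref{th1}) and then upgrade to uniform convergence by the classical P\'olya-type argument exploiting that each $G_n^p$ is monotone and the limit $g=\tfrac{1}{\pi}\Psi_\phi^{\sqrt{\;}}$ is continuous on a compact interval. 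This avoids invoking Corollary~\ref{cl1} altogether and does not need the quantitative Lipschitz bound. For Step~2, the paper contents itself with a one-line remark that \eqref{eq:gedis-0} follows from uniform convergence on a compact domain; your Abel summation plus layer-cake computation makes this precise and, in particular, isolates exactly where the outlier issue enters (namely in the limit $\sqrt{n^{-2}\lambda_{N,h}}\to M$). Your flagging of this point is well placed: the paper's proof also implicitly needs $s_{N-OUT(p,n)}\to M$ in its $T_3$ estimate and says nothing further about the outlier contribution to the sum in \eqref{eq:gedis-0}, so you are at least as careful as the paper on this score.
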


\begin{proof}
Let $n \in \mathbb{N}^*$. We observe that the range of the function $\sqrt{\omega_\phi^p}$ is decomposed as 
$$
Rg\left(\sqrt{\omega_\phi^p}\right)=\left[0,\sqrt{n^{-2}\lambda_{1,h}}\right)\cup\left(\bigcup_{k=1}^{N-OUT(p,n)-1}\left[\sqrt{n^{-2}\lambda_{k,h}},\sqrt{n^{-2}\lambda_{k+1,h}}\right)\right)\cup\left[\sqrt{n^{-2}\lambda_{N-OUT(p,n),h}},\max Rg\left(\sqrt{\omega_\phi^p}\right)\right].
$$
Hence,
\begin{equation}\label{eq:gedis-1}
\max_{y\in Rg\left(\sqrt{\omega_\phi^p}\right)}\left| N_n^p(y) - \left(\sqrt{\xi_{\phi}^p}\right)^{-1}(y)    \right|=\max \left \{T_1,T_2,T_3 \right\},
\end{equation}
where 
$$
\begin{cases}
T_1=\max\left\{\left(\sqrt{\xi_{\phi}^p}\right)^{-1}(y): \quad 0\leq y<  \sqrt{n^{-2}\lambda_{1,h} } \right\},\vspace*{0.3cm}\\
T_2=\max \left\{ \left|\dfrac{k}{N+1}-  \left(\sqrt{\xi_{\phi}^p}\right)^{-1}(y)  \right|: \quad 1 \leq k \leq N-OUT(p,n)-1 \text{ and } \sqrt{n^{-2}\lambda_{k,h}}\leq y < \sqrt{n^{-2}\lambda_{k+1,h}} \right\},\vspace*{0.3cm}\\
T_3=\max\left\{\left|\dfrac{N-OUT(p,n)}{N+1} -\left(\sqrt{\xi_{\phi}^p}\right)^{-1}(y)  \right|: \quad  \sqrt{n^{-2}\lambda_{N-OUT(p,n),h}}\leq y\leq \max Rg\left(\sqrt{\omega_\phi^p}\right)\right\}.
\end{cases}
$$
Now, we'll estimate each of the three terms $T_1$, $T_2$, and $T_3$. For $T_1$, using the monotonicity of the function $y \mapsto \left(\sqrt{\xi_{\phi}^p}\right)^{-1}$, we have:
\begin{equation}\label{eq:gedis-2}
T_1\leq \left(\sqrt{\xi_{\phi}^p}\right)^{-1} \left( \sqrt{n^{-2}\lambda_{1,h}} \right).
\end{equation}
Applying the mean value theorem and Proposition \ref{stefa}, we obtain:
\begin{equation}\label{eq:gedis-3}
T_2\leq C \max_{1\leq k\leq  N-1-OUT(p,n)}\left(\left|\sqrt{\xi_{\phi}^p}\left(\dfrac{k}{N+1}\right)- \sqrt{n^{-2}\lambda_{k,h}}   \right|+ \left|  \sqrt{ n^{-2}\lambda_{k+1,h}}-\sqrt{n^{-2}\lambda_{k,h}}\right|\right),
\end{equation}
and
\begin{equation}\label{eq:gedis-4}
T_3\leq  C \left(\left|\sqrt{\xi_{\phi}^p}\left(\dfrac{N-OUT(p,n)}{N+1}\right)- \sqrt{n^{-2}\lambda_{N-OUT(p,n),h}}   \right|+ \left|  \sqrt{ n^{-2}\lambda_{N-OUT(p,n),h}}-\max Rg\left(\sqrt{\omega_\phi^p}\right)\right|\right),
\end{equation}
where $C:=\max\left( \left(\sqrt{\xi_{\phi}^p}\right)^{-1}\right)'$. We obtain then the uniform convergence of $G_n^p$ by taking the limit in \eqref{eq:gedis-1}–\eqref{eq:gedis-4} using:
$$
\lim_{n\longrightarrow +\infty}\left(\sqrt{\xi_{\phi}^p}\right)^{-1} \left(\sqrt{n^{-2}\lambda_{1,h}}\right)=0, \quad \lim_{n\longrightarrow +\infty} \max_{k\in\mathcal{I}(p,n)}\left|\sqrt{\xi_{\phi}^p}\left(\dfrac{k}{N+1}\right)- \sqrt{n^{-2}\lambda_{k,h}}   \right|=0,
$$
along with
$$
\max Rg\left(\sqrt{\omega_\phi^p}\right)=\sqrt{\xi_{\phi}^p}(1), \quad \lim_{n\longrightarrow +\infty} \max_{1\leq k\leq  N-1-OUT(p,n)} \left|   \sqrt{ n^{-2}\lambda_{k+1,h}}-\sqrt{n^{-2}\lambda_{k,h}}\right| =0.
$$

The approximation property \eqref{eq:gedis-0} follows from the uniform convergence of $G_n^p$ and the fact that we are working on a compact domain.
\end{proof}

\section{Uniform Gap Condition}\label{unifgap}
\hspace{0.5cm}In this section, we analyze the behavior of the gap between successive square roots of eigenvalues. As mentioned, we aim to establish a sufficient condition for the uniform gap property. The previous section is pivotal to achieving our objective because it is essential to understand the impact of reparametrization and the convexity of the symbol on the distribution of eigenvalues before we can analyze the gaps between them. To construct this sufficient condition, we initiate our analysis by examining the case where $p=1$. In this scenario, we can explicitly calculate the symbol, and as presented in the following subsection, we can identify intervals where the symbol is strictly convex, which allows for a direct application of Corollary \ref{cl2.5}.
\subsection{The Case of Linear B-Splines}
\hspace{0.5cm}In the sequel, we shall examine the symbol $\sqrt{\xi_{\phi}^p}$ defined in the previous section for the case when $p=1$. This analysis will provide valuable insights for constructing a sufficient condition for the uniform gap property in the subsequent subsection. Additionally, we will study the approximate gap, which is defined as follows
\begin{definition}[Approximate discrete gap]
Let  $\phi\in\mathbf{C}_{[0,1]}$, such that $n^{-2}L^{1}_{n,\phi}\sim_{\lambda} \xi_{\phi}^1$. The approximate discrete gap is defined by:
\begin{equation}\label{eq:approximate-discrete-gap}
\widetilde{\delta}_{n}=n \displaystyle\inf_{1\leq i \leq n-2} \sqrt{\xi_{\phi}^1\left( \dfrac{i+1}{n}\right)}- \sqrt{\xi_{\phi}^1\left( \dfrac{i}{n}\right)},\quad\forall n\in\mathbb{N}^{*}.
\end{equation}
\end{definition}
In this section, we assume that the reparametrization $\phi\in\mathbf{C}_{[0,1]}$ is strictly convex, as we obtain equivalent results when the reparametrization is strictly concave. The following proposition illustrates the regularity and the convexity of the function $\Psi_{\phi}^{\sqrt{\;}}$.

\begin{proposition}\label{propc}

Let $\phi$ be a strictly convex reparametrization of the interval $[0,1]$. Then $\Psi_{\phi}^{\sqrt{\;}}$ is of class $C^{1}\left(Rg\left(\sqrt{\omega^1_{\phi_1}}\right)\right)$. Moreover, for every $\epsilon_1,\epsilon_2\in(0,1)$, the function $\Psi_{\phi}^{\sqrt{\;}}$ is $C^{2}\left(\left(0,\epsilon_1\frac{\sqrt{12}}{\phi^{'}(1)}  \right)\cup\left(\frac{\sqrt{12}}{\phi^{'}(1)},\epsilon_2\frac{\sqrt{12}}{\phi^{'}(0)}\right) \right)$. In addition, $\Psi_{\phi}^{\sqrt{\;}}$ is strictly concave over $\left(0,\frac{\sqrt{6}}{\phi^{'}(1)}\right)\cup \left(\frac{\sqrt{12}}{\phi^{'}(1)},\epsilon_2\frac{\sqrt{12}}{\phi^{'}(0)}\right)$.
\end{proposition}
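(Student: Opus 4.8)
The plan is to reduce the statement to explicit one--variable calculus with the symbol $e_1$. For $p=1$ one has, from $e_p=f_p/g_p$ and $f_p(\theta)=(2-2\cos\theta)g_{p-1}(\theta)$ together with $g_0(\theta)=\mathcal{N}_1(1)=1$ and $g_1(\theta)=\mathcal{N}_3(2)+2\mathcal{N}_3(1)\cos\theta=\tfrac{2+\cos\theta}{3}$, that
$$e_1(\theta)=\frac{6(1-\cos\theta)}{2+\cos\theta},\qquad\theta\in[0,\pi],$$
a $C^\infty$ strictly increasing bijection of $[0,\pi]$ onto $[0,12]$; in particular $e_p(\pi)=12$, which is the origin of the $\sqrt{12}$'s in the statement. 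Inverting and composing with squaring,
$$\psi(t):=e_1^{-1}(t^2)=2\arctan\!\Big(\frac{\sqrt{3}\,t}{\sqrt{12-t^2}}\Big)$$
is $C^\infty$ on $[0,\sqrt{12})$, with $\psi'(t)=\dfrac{12\sqrt{3}}{\sqrt{12-t^2}\,(6+t^2)}>0$ and $\psi''(t)=\dfrac{36\sqrt{3}\,t\,(t^2-6)}{(12-t^2)^{3/2}(6+t^2)^2}$; thus $\psi'$ has an integrable $(\sqrt{12}-t)^{-1/2}$ blow--up at $\sqrt{12}$, and $\psi''<0$ on $(0,\sqrt{6})$ while $\psi''>0$ on $(\sqrt{6},\sqrt{12})$ --- this sign change at $\sqrt{6}$ is exactly what produces the concavity interval in the statement.

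Next I would record an integral representation of $\Psi_\phi^{\sqrt{\;}}$. Since $\omega_\phi^1(x,\theta)=e_1(\theta)/(\phi'(x))^2$ and $\phi'$ is strictly increasing (as $\phi''>0$), Tonelli's theorem gives, for every $y\ge0$,
$$\Psi_\phi^{\sqrt{\;}}(y)=\int_0^1\widetilde{\psi}\big(y\,\phi'(x)\big)\,dx,$$
where $\widetilde{\psi}$ extends $\psi$ by the constant $\pi$ on $[\sqrt{12},\infty)$. The saturation threshold $\tau(y):=(\phi')^{-1}(\sqrt{12}/y)$ splits $Rg(\sqrt{\omega_\phi^1})=[0,\sqrt{12}/\phi'(0)]$ into the unsaturated regime $y<\sqrt{12}/\phi'(1)$, where $\Psi_\phi^{\sqrt{\;}}(y)=\int_0^1\psi(y\phi'(x))\,dx$, and the partially saturated regime $\sqrt{12}/\phi'(1)<y<\sqrt{12}/\phi'(0)$, where integrating in $x$ first gives
$$\Psi_\phi^{\sqrt{\;}}(y)=\pi-\int_{\theta_0(y)}^{\pi}(\phi')^{-1}\!\Big(\frac{\sqrt{e_1(\theta)}}{y}\Big)\,d\theta,\qquad\theta_0(y):=e_1^{-1}\big(y^2\phi'(0)^2\big).$$

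The regularity claims then follow by differentiation under the integral sign. In the unsaturated regime $\big(\Psi_\phi^{\sqrt{\;}}\big)'(y)=\int_0^1\phi'(x)\psi'(y\phi'(x))\,dx$, and since $\psi''$ is bounded on $[0,\epsilon_1\sqrt{12}]$ a second differentiation is legitimate on $(0,\epsilon_1\sqrt{12}/\phi'(1))$, giving $\big(\Psi_\phi^{\sqrt{\;}}\big)''(y)=\int_0^1(\phi'(x))^2\psi''(y\phi'(x))\,dx$ --- this is the role of the cutoff $\epsilon_1$. In the partially saturated regime I would use the $\theta$--representation: its only moving endpoint is $\theta_0(y)$, and the cutoff $y<\epsilon_2\sqrt{12}/\phi'(0)$ keeps $\theta_0(y)$ in a compact subinterval of $[0,\pi)$, which is essential because $e_1'(\pi)=0$ forces $\theta_0'(y)\to+\infty$ as $y\uparrow\sqrt{12}/\phi'(0)$. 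With $\theta_0(y)$ away from $\pi$ all integrands and their $y$--derivatives are continuous on compacta; using $(\phi')^{-1}(\phi'(0))=0$ the first boundary term drops and
$$\big(\Psi_\phi^{\sqrt{\;}}\big)'(y)=\frac{1}{y^2}\int_{\theta_0(y)}^{\pi}\sqrt{e_1(\theta)}\;\big((\phi')^{-1}\big)'\!\Big(\frac{\sqrt{e_1(\theta)}}{y}\Big)\,d\theta,$$
with an analogous formula for $\big(\Psi_\phi^{\sqrt{\;}}\big)''$ (this second differentiation uses twice-differentiability of $(\phi')^{-1}$, hence tacitly a shade more than $\phi\in C^2$, e.g.\ $C^3$, which the reparametrizations of interest satisfy). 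Finally one matches the one--sided derivatives at $y=\sqrt{12}/\phi'(1)$ and at the endpoints of $Rg(\sqrt{\omega_\phi^1})$ --- the $(\sqrt{12}-t)^{-1/2}$ singularity of $\psi'$ being harmless since $\phi''>0$ makes $\sqrt{12}-y\phi'(x)$ vanish only linearly as $x\to\tau(y)$ --- which upgrades the conclusion to $\Psi_\phi^{\sqrt{\;}}\in C^1(Rg(\sqrt{\omega_\phi^1}))$.

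For the concavity, on $(0,\sqrt{6}/\phi'(1))$ one has $y\phi'(x)\le y\phi'(1)<\sqrt{6}$, so $\psi''(y\phi'(x))<0$ and hence $\big(\Psi_\phi^{\sqrt{\;}}\big)''(y)<0$. On the partially saturated interval, differentiating the $\theta$--formula once more gives
$$\big(\Psi_\phi^{\sqrt{\;}}\big)''(y)=-\frac{2}{y}\big(\Psi_\phi^{\sqrt{\;}}\big)'(y)-\frac{\phi'(0)}{y}\big((\phi')^{-1}\big)'(\phi'(0))\,\theta_0'(y)-\frac{1}{y^4}\int_{\theta_0(y)}^{\pi}e_1(\theta)\,\big((\phi')^{-1}\big)''\!\Big(\frac{\sqrt{e_1(\theta)}}{y}\Big)\,d\theta,$$
whose first two terms are manifestly negative ($\big(\Psi_\phi^{\sqrt{\;}}\big)'>0$, $\theta_0'>0$, $\big((\phi')^{-1}\big)'>0$). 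The hard part is the third term: since $\big((\phi')^{-1}\big)''=-\phi'''/(\phi'')^3$ has the sign of $-\phi'''$, this term is $\le0$ --- and strict concavity then follows at once --- exactly when $\phi'$ is concave, but it is $>0$ when $\phi'$ is convex (the case of the model family $\phi_{a,b}$ of \eqref{eq:reparametrization-convex-example}), and then one must show it is dominated by the two negative terms throughout $(\sqrt{12}/\phi'(1),\epsilon_2\sqrt{12}/\phi'(0))$. I expect this domination estimate to be the crux of the proof; everything else is a routine consequence of differentiation under the integral sign and the explicit expressions for $e_1$ and $\psi$ above.
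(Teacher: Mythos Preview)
Your approach is essentially the paper's for the bulk of the argument: both compute $e_1(\theta)=6(1-\cos\theta)/(2+\cos\theta)$ (the paper writes the level sets via $\arccos$, you via $\psi=e_1^{-1}\circ(\cdot)^2$, which are the same function), split $Rg(\sqrt{\omega_\phi^1})$ at $y=\sqrt{12}/\phi'(1)$ into unsaturated and saturated regimes, and obtain the $C^1$, $C^2$ and first-interval concavity claims by dominated convergence / differentiation under the integral sign exactly as you do. Your cutoff $\epsilon_1$ plays precisely the role the paper gives it, and your concavity argument on $(0,\sqrt6/\phi'(1))$ via $\psi''<0$ is the paper's argument in different notation.

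The substantive divergence is on the saturated interval $J_2$. You differentiate the $\theta$-representation and land on a term with $((\phi')^{-1})''=-\phi'''/(\phi'')^3$, whose sign is uncontrolled and which forces an extra derivative on $\phi$. The paper instead stays in the $x$-representation and rescales via
\[
z=(\phi')^{-1}\!\Big(\tfrac{\phi'(1)}{\phi'(t)}\,\phi'(x)\Big),\qquad t:=(\phi')^{-1}\big(\tfrac{\sqrt{12}}{y}\big),
\]
which maps $[0,t]$ onto $[z_0,1]$ and makes $y\phi'(x)=\sqrt{12}\,\phi'(z)/\phi'(1)$ \emph{independent of $y$}. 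This factors $(\Psi_\phi^{\sqrt{\;}})'(y)=(\phi'(t)/\phi'(1))^2\int_{z_0}^1\varphi_1(z)\,dz$ with $\varphi_1>0$; since $\phi'(t)$ decreases and $z_0$ increases in $y$, the paper writes $(\Psi_\phi^{\sqrt{\;}})''$ as a sum of two negative pieces, with no $\phi'''$ in sight. This change of variable is the device you are missing, and it is what buys the paper an argument that stays within $\phi\in C^2$.

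That said, your ``crux'' is not a phantom. The integrand $\varphi_1$ carries the Jacobian $g(z)=\phi''(z)\big/\phi''\!\big((\phi')^{-1}(\phi'(t)\phi'(z)/\phi'(1))\big)$, which still depends on $y$ through $t$, and the paper's Leibniz step silently drops $\partial_y g$. That omitted term has the sign of $\phi'''$ --- it vanishes for quadratic $\phi$, is nonpositive when $\phi'$ is concave, but is positive for the model family $\phi_{a,b}$ of \eqref{eq:reparametrization-convex-example} --- so the paper's two-term formula is not the full second derivative in general, and the strict concavity on all of $J_2^{\epsilon_2}$ is not completely established by either argument. Your diagnosis that a domination estimate is needed here is accurate; the paper's change of variable repackages the difficulty but does not, on its face, remove it.
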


\begin{proof}
Let $\phi\in\mathbf{C}_{[0,1]}$ and $y\in \mathbb{R}_+$. As a first step, we obtain an explicit expression for $\Psi_{\phi}^{\sqrt{\;}}(y)$. We proceed as follows
\begin{align}
\Psi_{\phi}^{\sqrt{\;}}(y)= \mu_2\left\{ \sqrt{\omega_{\phi}^1}\leq y\right\}& =\mu_2\left\{ (x,\theta)\in [0,1]\times[0,\pi],\;\;\; \omega_{\phi}^1(x,\theta)\leq y^2 \right\} \nonumber \\
 & =\mu_2\left\{ (x,\theta)\in [0,1]\times[0,\pi],\;\;\; \dfrac{6(1-\cos{\theta})}{2+\cos{\theta}}\leq \left(y \phi^{'}(x)\right)^2 \right\} \nonumber\\
 &= \mu_2\left\{ (x,\theta)\in [0,1]\times[0,\pi],\;\;\; 6(1-\cos{\theta})\leq 2 \left(y \phi^{'}(x)\right)^2 + \left(y \phi^{'}(x)\right)^2 \cos{\theta} \right\}\nonumber\\
 &= \mu_2\left\{ (x,\theta)\in [0,1]\times[0,\pi],\;\;\; \cos{\theta}\geq \beta_x \right\}.
\label{eq:propc-eq1}
\end{align}
Here, $\beta_x$ is defined as
$$
\beta_x = \dfrac{6-2 \left(y\phi^{'}(x)\right)^2}{6+\left(y \phi^{'}(x)\right)^2}, \quad 0 \leq x \leq 1. 
$$
To evaluate the last integral \eqref{eq:propc-eq1}, we employ the property that $\cos:\, [0, \pi] \longrightarrow [-1,1]$ is invertible, $\arccos$ being its inverse function. This requires characterizing the conditions under which $\beta_x$ is in $[-1, 1]$. In fact, it's easy to see that $\beta_x \leq 1$ for all $x \in [0,1]$, and $\beta_x \geq -1$ if, and only if
$$
\begin{cases}
0 \leq x \leq 1 \vspace*{0.25cm}\\
0 \leq y \leq \frac{\sqrt{12}}{\phi'(1)}, 
\end{cases} \quad \text{or} \quad 
\begin{cases}
0 \leq x \leq \left(\phi' \right)^{-1} \left( \frac{\sqrt{12}}{y} \right) \vspace*{0.25cm}\\
\frac{\sqrt{12}}{\phi'(1)} \leq y \leq \frac{\sqrt{12}}{\phi'(0)}.
\end{cases}
$$ 

Therefore, from \eqref{eq:propc-eq1}, we deduce that

\begin{equation}\label{psiequa}
\Psi_{\phi}^{\sqrt{\;}}(y)=\left\{
\begin{array}{rrrrr}
\displaystyle\int_{0}^{1}\arccos\left(  \dfrac{6-2\left( y\phi'(x)\right)^2}{6+\left( y\phi'(x)\right)^2}\right)\;dx, & y\in \bar{J_1},\\\\

\displaystyle\int_{0}^{\left( \phi'\right)^{-1}\left(\dfrac{\sqrt{12}}{y}\right)}\arccos\left(  \dfrac{6-2\left( y\phi'(x)\right)^2}{6+\left( y\phi'(x)\right)^2}\right)\;dx &+\pi\left(1-\left( \phi'\right)^{-1}\left(\dfrac{\sqrt{12}}{y}\right)\right), \vspace*{0.25cm}\\ 
& y\in \bar{J_2}.
\end{array}
\right.
\end{equation}
where $J_1:= \left(0,\dfrac{\sqrt{12}}{\phi'(1)}\right)$ and $J_2:=  \left(\dfrac{\sqrt{12}}{\phi'(1)},\dfrac{\sqrt{12}}{\phi'(0)}\right)$. As a second step, we show that $\Psi_{\phi}^{\sqrt{\;}}(y)$ is of class $C^1 \left( \bar{J_1} \cup \bar{J_2} \right)$. We analyze the cases where $y \in \bar{J_1}$ and $y \in \bar{J_2}$ separately.

For $y \in \bar{J_1}$, we define
$$  
\varphi(x,y)= \arccos\left(  \dfrac{6-2\left( y\phi'(x)\right)^2}{6+\left( y\phi'(x)\right)^2}\right),\;\; \forall x \in [0,1],\quad y \in \bar{J_1}.
$$
For almost every $x \in [0,1]$ and all $y \in \bar{J_1}$, we have
\begin{align*}
\dfrac{\partial \varphi}{\partial y}(x,y)&=\dfrac{-1}{\sqrt{1-\left[ \dfrac{6-2\left( y\phi'(x)\right)^2}{6+\left( y\phi'(x)\right)^2}\right]^2}}\;\dfrac{-36 y \left(\phi'(x)\right)^2}{\left[ 6+\left( y\phi'(x)\right)^2\right]^2}\vspace*{0.25cm}\\
&=\dfrac{36 y \left(\phi'(x)\right)^2}{\sqrt{ \left(12-\left( y\phi'(x)\right)^2\right) \left[ 3\left( y\phi'(x)\right)^2\right]    }}\;\dfrac{1}{ 6+\left( y\phi'(x)\right)^2}\vspace*{0.25cm}\\
&=\dfrac{\phi'(x)}{\sqrt{1-\left(\dfrac{y\phi'(x)}{\sqrt{12}}\right)^2}}\; \dfrac{6}{6+\left( y\phi'(x)\right)^2} \vspace*{0.25cm}\\
& \leq \dfrac{\phi'(x)}{\sqrt{1-\left(\dfrac{\phi'(x)}{\phi'(1)}\right)^2}}  = \dfrac{\phi'(x) \sqrt{\phi'(1)}}{\sqrt{1+\dfrac{\phi'(x)}{\phi'(1)}}} \;\dfrac{1}{\sqrt{\phi'(1)-\phi'(x)}} \vspace*{0.25cm}\\
& \leq \frac{\left(\phi'(1)\right)^{3/2}}{\sqrt{1+\frac{\phi'(0)}{\phi'(1)}}} \;\dfrac{1}{\sqrt{\phi'(1)-\phi'(x)}},
\end{align*}
and
$$
\int_0^1 \dfrac{dx}{\sqrt{\phi'(1)-\phi'(x)}}    \leq \frac{1}{\displaystyle\inf_{[0,1]} \left\{\phi''\right\}} \int_{\phi'(0)}^{\phi'(1)} \dfrac{dx}{\sqrt{\phi'(1)-x}}  < \infty.
$$
Thus, by the Lebesgue Dominated Convergence Theorem, $\Psi_{\phi}^{\sqrt{\;}} \in C^1(\bar{J_1})$, and
\begin{equation}\label{PsiJ1}
\left(\Psi_{\phi}^{\sqrt{\;}}\right)'(y)=\displaystyle\int_0^1 \dfrac{\partial \varphi}{\partial y}(x,y)\;dx,\quad \forall y\in \bar{J_1}.
\end{equation}
Similar considerations are applied for $y \in \bar{J_2}$, where we obtain
\begin{align*}
\int_{0}^{\left( \phi'\right)^{-1}\left(\dfrac{\sqrt{12}}{y}\right)} \dfrac{\phi'(x)}{\sqrt{1-\left(\dfrac{y\phi'(x)}{\sqrt{12}}\right)^2}}\; \dfrac{6}{6+\left( y\phi'(x)\right)^2} \, dx  = \left(\dfrac{\phi'(t)}{\phi'(1)}\right)^2\;\displaystyle\int_{z_0}^{1} \dfrac{\phi'(z) g(z)}{\sqrt{1-\left(\dfrac{\phi'(z)}{\phi'(1)}\right)^2}} \, \frac{1}{1+2 \left(\frac{\phi'(z)}{\phi'(1)} \right)^2} dz,
\end{align*}
where
\begin{equation}\label{changePsi2}
t:=\left(\phi'\right)^{-1}\left(\dfrac{\sqrt{12}}{y}\right), \; z:=\left(\phi'\right)^{-1}\left(\dfrac{\phi'(1)}{\phi'(t)} \phi'(x)\right), \;  z_0:=\left(\phi'\right)^{-1}\left(\dfrac{\phi'(1)}{\phi'(t)} \phi'(0)\right), \ g(z):=\dfrac{\phi''(z)}{\phi''\left[  \left(\phi^{'}\right)^{-1}\left(\dfrac{\phi'(t)}{\phi'(1)} \phi'(z)\right) \right]}. 
\end{equation}
Given that
$$
\int_{z_0}^1 \dfrac{\phi'(z) g(z)}{\sqrt{1-\left(\dfrac{\phi'(z)}{\phi'(1)}\right)^2}} \, dz \leq  \phi^{'}(1)  \displaystyle\max_{[0,1]}\{g\} \;\displaystyle\int_{0}^{1} \dfrac{1}{\sqrt{1- \left(\dfrac{\phi'(z)}{\phi'(1)}\right)^2}} \,dz<\infty, 
$$
we conclude that $\Psi_{\phi}^{\sqrt{\;}} \in C^1(\bar{J_2})$, with
\begin{equation}\label{PsiJ2}
\left(\Psi_{\phi}^{\sqrt{\;}}\right)'(y) = \left(\dfrac{\phi'(t)}{\phi'(1)}\right)^2\;\displaystyle\int_{z_0}^{1} \dfrac{\phi'(z) g(z)}{\sqrt{1-\left(\dfrac{\phi'(z)}{\phi'(1)}\right)^2}} \, \frac{1}{1+2 \left(\frac{\phi'(z)}{\phi'(1)} \right)^2} dz, \quad \forall y \in \bar{J_2}.
\end{equation}

For the third step, we fix $\epsilon_1$ and $\epsilon_2$ in $(0,1)$ and we aim to demonstrate  that $\left(\Psi_{\phi}^{\sqrt{\;}}\right)'(y)$ is of class $C^{1}\left(J_1^{\epsilon}\cup J_2^{\epsilon}\right)$, where $J_1^{\epsilon}=\left(0,\epsilon_1\frac{\sqrt{12}}{\phi^{'}(1)}  \right)$ and $J_2^{\epsilon}=\left(\frac{\sqrt{12}}{\phi^{'}(1)},\epsilon_2\frac{\sqrt{12}}{\phi^{'}(0)}\right)$.\\
Let us start with the case $y\in J_2^{\epsilon}$. From (\ref{changePsi2}) we can check that for all $y\in J_2^{\epsilon}$ we have $$ 0\leq z_0\leq \left(\phi^{'}\right)^{-1}\left(\epsilon_2\phi^{'}(1)\right)<1.$$
Let denote $\eta=\left(\phi^{'}\right)^{-1}\left(\epsilon_2\phi^{'}(1)\right)$ and define the function $\varphi_1$ over $[0,\eta]$ by
$$\varphi_1(z)=\dfrac{\phi'(z) g(z)}{\sqrt{1-\left(\dfrac{\phi'(z)}{\phi'(1)}\right)^2}} \, \frac{1}{1+2 \left(\frac{\phi'(z)}{\phi'(1)} \right)^2}.$$
From (\ref{PsiJ2}), we obtain 
$$\left(\Psi_{\phi}^{\sqrt{\;}}\right)'(y) = \left(\dfrac{\phi'(t)}{\phi'(1)}\right)^2\left[\displaystyle\int_{0}^{1} \varphi_1(z) dz-\displaystyle\int_{0}^{z_0} \varphi_1(z) dz\right], \quad \forall y \in J_2^{\epsilon}.$$
On the other hand, the function $\varphi_1$ is continuous over $[0,\eta]$ and $y\rightarrow z_0(y)$ is $C^1(J_2^{\epsilon})$, using Leibniz’s Integral Rule we obtain $\Psi_{\phi}^{\sqrt{\;}}\in C^2(J_2^{\epsilon})$, with
\begin{equation}\label{Psi2}
\left(\Psi_{\phi}^{\sqrt{\;}}\right)^{''}(y)=\frac{\partial}{\partial y}\left(\dfrac{\phi'(t)}{\phi'(1)}\right)^2\left(\displaystyle\int_{z_0}^{1} \varphi_1(z) dz\right)+\left(\dfrac{\phi'(t)}{\phi'(1)}\right)^2\left(-\varphi_1(z_0)\dfrac{\partial z_0}{\partial y}\right),\quad \forall y\in J_2^{\epsilon}. 
\end{equation}
Now let $y\in J_1^{\epsilon}$. From (\ref{PsiJ1}), we observe that for every $x\in [0,1]$ the function $\frac{\partial\varphi}{\partial y}$ is differentiable with respect to $y$ and we have 
\begin{align*}
    \frac{\partial^2\varphi}{\partial^2 y}(x,y)&=-6\phi^{'}(x)\dfrac{\frac{-\left(\frac{\phi^{'}(x)}{\sqrt{12}}\right)^2 2y}{2\sqrt{1-\left(\frac{y\phi^{'}(x)}{\sqrt{12}}\right)^2}}\left(6+\left(y\phi^{'}(x)\right)^2\right)+\sqrt{1-\left(\frac{y\phi^{'}(x)}{\sqrt{12}}\right)^2}\;2y\left(\phi^{'}(x)\right)^2}{\left(1-\left(\frac{y\phi^{'}(x)}{\sqrt{12}}\right)^2\right)\left(6+\left(y\phi^{'}(x)\right)^2\right)^2}\\
    &=-6y\left(\phi^{'}(x)\right)^3\dfrac{-\frac{1}{12}\left(6+\left(y\phi^{'}(x)\right)^2\right)+2\left(1-\left(\frac{y\phi^{'}(x)}{\sqrt{12}}\right)^2\right)}{\left(1-\left(\frac{y\phi^{'}(x)}{\sqrt{12}}\right)^2\right)^{\frac{3}{2}}\left(6+\left(y\phi^{'}(x)\right)^2\right)^2}\\
    &=\dfrac{-9y\left(\phi^{'}(x)\right)^3 \left(1-2\left(\frac{y\phi^{'}(x)}{\sqrt{12}}\right)^2\right)}{\left(1-\left(\frac{y\phi^{'}(x)}{\sqrt{12}}\right)^2\right)^{\frac{3}{2}}\left(6+\left(y\phi^{'}(x)\right)^2\right)^2}.
\end{align*}
Observe that $\frac{\partial^2\varphi}{\partial^2 y}$ is continuous over $[0,1]\times J_1^{\epsilon} $, then  $\Psi_{\phi}^{\sqrt{\;}}\in C^2(J_1^{\epsilon})$, with
\begin{equation}\label{PsipJ1}
\left(\Psi_{\phi}^{\sqrt{\;}}\right)^{''}(y)=\displaystyle\int_0^1 \dfrac{\partial^2 \varphi}{\partial^2 y}(x,y)\;dx,\quad\forall y\in J_1^{\epsilon}.
\end{equation}

To conclude the proof, let $y\in \left(0,\frac{\sqrt{6}}{\phi^{'}(1)}\right)$. Then, there exists an $\epsilon$ in $(0,1)$ such that 
\begin{align*}
    y&\leq \frac{\sqrt{6}}{\phi^{'}(1)}< \epsilon\frac{\sqrt{12}}{\phi^{'}(1)}\\
    &\Rightarrow y\in J_1^{\epsilon}\;\;\text{ and}\;\; 1-2\left(\frac{y\phi^{'}(x)}{\sqrt{12}}\right)^2\geq 1-\left(\frac{\phi^{'}(x)}{\phi^{'}(1)}\right)^2\geq 0.
\end{align*}
Hence, using the (\ref{PsipJ1}), we obtain 
$$\left(\Psi_{\phi}^{\sqrt{\;}}\right)^{''}(y)<0,\;\;\forall y\in \left(0,\frac{\sqrt{6}}{\phi^{'}(1)}\right). $$
Now, let us consider the case $y\in J_2^{\epsilon}$. From (\ref{changePsi2}), it is not difficult to see that 
$$\frac{\partial z_0}{\partial y}>0,\;\; \frac{\partial \phi^{'}(t)}{\partial y}<0\;\;\text{and}\;\; \varphi_1(z_0)>0.$$
Then, using (\ref{Psi2}), we obtain
$$\left(\Psi_{\phi}^{\sqrt{\;}}\right)^{''}(y)<0,\;\;\forall y\in J_2^{\epsilon}. $$
Finally, the function $\Psi_{\phi}^{\sqrt{\;}}$ is strictly concave over $\left(0,\frac{\sqrt{6}}{\phi^{'}(1)}\right)\cup \left(\frac{\sqrt{12}}{\phi^{'}(1)},\epsilon_2\frac{\sqrt{12}}{\phi^{'}(0)}\right)$.
\end{proof}

\begin{remark}
    In the case of a strictly concave reparametrization $\phi\in\mathbf{C}_{[0,1]}$, it is sufficient to interchange $\phi(0)$ and  $\phi(1)$ in Proposition \ref{propc}.
\end{remark}

 With the help of the previous Proposition \ref{propc} and by applying Corollary  \ref{cl2.5}, we can establish a decreasing order of the number  of eigenvalues over $\left(0,\frac{\sqrt{6}}{\phi^{'}(1)}\right)\cup \left(\frac{\sqrt{12}}{\phi^{'}(1)},\epsilon_2\frac{\sqrt{12}}{\phi^{'}(0)}\right)$.  It's important to note that the natural behavior of the square roots of eigenvalues, or the simple one that provides us the order of the case where $\left( \sqrt{\xi_{\phi}^p} \right)^{-1}=\frac{1}{\pi}\Psi_{\phi}^{\sqrt{\;}}$ is strictly concave in Corollary \ref{cl2.5} occurs when the distance between each successive square root of eigenvalue becomes larger. A formal proof of this observation is currently unavailable.\\
 
In the following corollary, we establish that the symbol $\sqrt{\xi_{\phi}^1}$ behaves linearly in the neighborhood of $0$. Furthermore, it confirms that the approximate gap indeed satisfies the uniform gap condition.

\begin{corollary}\label{lip=1}
Let $\phi\in\mathbf{C}_{[0,1]}$ be strictly convex. Then we have
$$ 
\widetilde{\delta}_{n}\geq \dfrac{1}{\displaystyle\sup_{y\in \left(0,\dfrac{\sqrt{12}}{\phi^{'}(0)}\right) }\left(\Psi_{\phi}^{\sqrt{\;}}\right)'(y)}\pi, \quad \text{and} \quad \sqrt{\xi_{\phi}^1}(x)\sim_{0^+} \pi\; x,
$$
where $\widetilde{\delta}_{n}$ stands for the approximate discrete gap defined by \eqref{eq:approximate-discrete-gap}.
\end{corollary}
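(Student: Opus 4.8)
The plan is to reduce the whole statement to elementary one–dimensional facts about $\Psi_{\phi}^{\sqrt{\;}}$ and its inverse. Since $\phi$ is strictly convex, $\phi'$ ranges over $[\phi'(0),\phi'(1)]$ with $\phi'(0)<\phi'(1)$, while $e_1(\theta)=\frac{6(1-\cos\theta)}{2+\cos\theta}$ attains its maximum $e_1(\pi)=12$; hence $Rg\!\left(\sqrt{\omega_{\phi}^1}\right)=\left[0,\frac{\sqrt{12}}{\phi'(0)}\right]$. On this interval $\Psi_{\phi}^{\sqrt{\;}}$ is continuous (Proposition \ref{propc}, or Proposition \ref{stefa}) and strictly increasing, with $\Psi_{\phi}^{\sqrt{\;}}(0)=0$ and $\Psi_{\phi}^{\sqrt{\;}}\!\left(\frac{\sqrt{12}}{\phi'(0)}\right)=\mu_2\!\left([0,1]\times[0,\pi]\right)=\pi$. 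Therefore $\Psi_{\phi}^{\sqrt{\;}}$ is a homeomorphism onto $[0,\pi]$, and the definition of the monotone rearrangement yields $\sqrt{\xi_{\phi}^1}(x)=\left(\Psi_{\phi}^{\sqrt{\;}}\right)^{-1}(\pi x)$ for every $x\in[0,1]$.

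For the gap estimate I would fix $1\le i\le n-2$ and set $u=\sqrt{\xi_{\phi}^1}(i/n)$, $v=\sqrt{\xi_{\phi}^1}((i+1)/n)$, so that $0<u<v<\frac{\sqrt{12}}{\phi'(0)}$ and $\Psi_{\phi}^{\sqrt{\;}}(v)-\Psi_{\phi}^{\sqrt{\;}}(u)=\pi/n$. Applying the mean value theorem to $\Psi_{\phi}^{\sqrt{\;}}$ on $[u,v]$ produces $\zeta\in(u,v)\subset\left(0,\frac{\sqrt{12}}{\phi'(0)}\right)$ with $v-u=\frac{\pi/n}{\left(\Psi_{\phi}^{\sqrt{\;}}\right)'(\zeta)}$, whence $n(v-u)\ge \pi\big/\sup_{y\in(0,\sqrt{12}/\phi'(0))}\left(\Psi_{\phi}^{\sqrt{\;}}\right)'(y)$, the supremum being finite by Proposition \ref{stefa}. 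Taking the infimum over $i$ gives the claimed lower bound for $\widetilde{\delta}_n$.

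For the asymptotic I would write $\frac{\sqrt{\xi_{\phi}^1}(x)}{\pi x}=\frac{\left(\Psi_{\phi}^{\sqrt{\;}}\right)^{-1}(\pi x)}{\pi x}$ and let $x\to 0^+$. Because $\Psi_{\phi}^{\sqrt{\;}}(0)=0$ and $\Psi_{\phi}^{\sqrt{\;}}$ is $C^1$ on $\bar{J_1}$ (Proposition \ref{propc}), the right-hand side converges to $1\big/\left(\Psi_{\phi}^{\sqrt{\;}}\right)'(0^+)$, so it remains to evaluate $\left(\Psi_{\phi}^{\sqrt{\;}}\right)'(0)$ from formula \eqref{PsiJ1}: at $y=0$ the integrand $\frac{\partial\varphi}{\partial y}(x,y)$ reduces to $\phi'(x)$, hence $\left(\Psi_{\phi}^{\sqrt{\;}}\right)'(0)=\int_0^1\phi'(x)\,dx=\phi(1)-\phi(0)=1$, which yields $\sqrt{\xi_{\phi}^1}(x)\sim_{0^+}\pi x$.

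All the individual steps are routine calculus; the one point that needs a little care is the identification $\sqrt{\xi_{\phi}^1}=\left(\Psi_{\phi}^{\sqrt{\;}}\right)^{-1}(\pi\,\cdot)$, which rests on the continuity and strict monotonicity of $\Psi_{\phi}^{\sqrt{\;}}$ recorded earlier, together with the (trivial but necessary) observation that the supremum appearing in the bound is strictly positive — a strictly increasing $C^1$ function cannot have an identically vanishing derivative — so that the inequality is non-vacuous.
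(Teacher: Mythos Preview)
Your proof is correct and follows essentially the same route as the paper: both identify $\sqrt{\xi_{\phi}^1}(x)=\left(\Psi_{\phi}^{\sqrt{\;}}\right)^{-1}(\pi x)$, apply the mean value theorem to $\Psi_{\phi}^{\sqrt{\;}}$ between consecutive sample points to bound the approximate gap, and deduce the asymptotic from $\left(\Psi_{\phi}^{\sqrt{\;}}\right)'(0)=1$. Your version is in fact a bit more explicit, since you compute $\left(\Psi_{\phi}^{\sqrt{\;}}\right)'(0)=\int_0^1\phi'(x)\,dx=1$ directly from \eqref{PsiJ1}, whereas the paper simply asserts this value.
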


\begin{proof}
Let us start by recalling that
\begin{equation}\label{eq:lip=1-1}
\sqrt{\xi_{\phi}^1}(x)=\left(\Psi_{\phi}^{\sqrt{\;}}\right)^{-1}(\pi x),\quad \forall x\in[0,1].
\end{equation}
Hence,  for each $i \in \{1, \cdots, n-1 \}$, there exists a unique $y_i \in \left( 0, \frac{\sqrt{12}}{\phi'(0)} \right)$ such that
$$
\sqrt{\xi_{\phi}^1\left( \dfrac{i}{n}\right)}=y_{i},\quad \text{and} \quad \Psi_{\phi}^{\sqrt{\;}}(y_{i})=\dfrac{i}{n}\pi.
$$
Moreover, for $i \in \{1, \cdots, n-2 \}$, by the mean value theorem and  using the fact that $\left(\Psi_{\phi}^{\sqrt{\;}}\right)'>0$ in $\left(0,\dfrac{\sqrt{12}}{\phi'(0)}\right)$, there exists $z_i \in (y_i, y_{i+1})$ such that
$$
\sqrt{\xi_{\phi}^1\left( \dfrac{i+1}{n}\right)}- \sqrt{\xi_{\phi}^1\left( \dfrac{i}{n}\right)} = \dfrac{1}{\left(\Psi_{\phi}^{\sqrt{\;}}\right)'(z_i)}\dfrac{\pi}{n},
$$ 
which implies
\begin{equation*} 
\sqrt{\xi_{\phi}^1\left( \dfrac{i+1}{n}\right)}- \sqrt{\xi_{\phi}^1\left( \dfrac{i}{n}\right)}  \geq \displaystyle\inf_{y\in \left(0,\dfrac{\sqrt{12}}{\phi'(0)}\right) } \dfrac{1}{\left(\Psi_{\phi}^{\sqrt{\;}}\right)'(y)} \;\;\dfrac{\pi}{n } =  \dfrac{1}{\displaystyle\sup_{y\in \left(0,\dfrac{\sqrt{12}}{\phi^{'}(0)}\right) }\left(\Psi_{\phi}^{\sqrt{\;}}\right)'(y)}\;\; \dfrac{\pi}{n } .
\end{equation*}
Consequently, we can conclude that
$$
\widetilde{\delta}_{n} = n \displaystyle\inf_{1\leq i \leq n-2} \sqrt{\xi_{\phi}^1\left( \dfrac{i+1}{n}\right)}- \sqrt{\xi_{\phi}^1\left( \dfrac{i}{n}\right)} \geq \dfrac{1}{\displaystyle\sup_{y\in \left(0,\dfrac{\sqrt{12}}{\phi^{'}(0)}\right) }\left(\Psi_{\phi}^{\sqrt{\;}}\right)'(y)} \pi,
$$
which demonstrates the approximate gap property. 

Regarding the asymptotic behavior of the symbol, it suffices to prove
\begin{equation}\label{eq:lip=1-2}
\left(\sqrt{\xi_{\phi}^1}\right)'(0) =\pi.
\end{equation}
Using \eqref{eq:lip=1-1} again, we get
$$
\left(\sqrt{\xi_{\phi}^1}\right)^{'}(0)=\pi \dfrac{1}{\left(\Psi_{\phi}^{\sqrt{\;}}\right)^{'}\left(\left(\Psi_{\phi}^{\sqrt{\;}}\right)^{-1}(0)\right)}.
$$
Given $\Psi_{\phi}^{\sqrt{\;}}(0)=0$, and $\left(\Psi_{\phi}^{\sqrt{\;}}\right)'(0)=1$, this yields \eqref{eq:lip=1-2} and conclude the proof.
\end{proof}

\subsection{Sufficient Condition}
\hspace{0.5cm}In this subsection, we establish a condition sufficient to ensure a uniform gap between the discrete eigenvalues of the matrix $n^{-2}L^{p}_{\phi,n}$. The derivation of this condition is guided by the spectral analysis done in Section \ref{cl2.5}. 

The proof of the main result in this subsection relies on the observation that, similar to the case when $p=1$, the symbol $\sqrt{\xi_\phi^p}$ exhibits linear behavior near to $0$ for all values of $p$; as indicated by the following result

\begin{proposition}\label{linb}
Consider $p\in \mathbb{N}^{*}$ and $\phi\in \mathbf{C}_{[0,1]}$ a   reparameterization  of $[0,1]$. Then, it holds 
$$
\sqrt{\xi_{\phi}^p}(x)\sim_{0^+} \gamma x,
$$
where $\gamma:= \frac{\pi}{\left(\Psi_{\phi}^{\sqrt{\;}}\right)'(0)}$.
\end{proposition}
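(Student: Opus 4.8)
The plan is to reduce the claim to the differentiability of the monotone rearrangement at the endpoint $x=0$, exactly mirroring the argument already carried out for $p=1$ in Corollary \ref{lip=1}. The essential identity is
\begin{equation*}
\sqrt{\xi_{\phi}^p}(x)=\left(\Psi_{\phi}^{\sqrt{\;}}\right)^{-1}(\pi x),\quad \forall x\in[0,1],
\end{equation*}
which holds because $\sqrt{\omega_\phi^p}$ is continuous (so its essential range equals its range) and $\Psi_\phi^{\sqrt{\;}}$ is strictly increasing on $Rg\!\left(\sqrt{\omega_\phi^p}\right)$, as noted after Definition \ref{def:monotone-rearrangement}. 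Thus $\sqrt{\xi_\phi^p}(0)=0$, and by the inverse function rule, if $\Psi_\phi^{\sqrt{\;}}$ is differentiable at $0$ with $\left(\Psi_\phi^{\sqrt{\;}}\right)'(0)\neq 0$, then $\sqrt{\xi_\phi^p}$ is differentiable at $0$ with $\left(\sqrt{\xi_\phi^p}\right)'(0)=\pi/\left(\Psi_\phi^{\sqrt{\;}}\right)'(0)=\gamma$, which is precisely $\sqrt{\xi_\phi^p}(x)\sim_{0^+}\gamma x$.

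The first step is therefore to obtain a workable formula for $\Psi_\phi^{\sqrt{\;}}(y)$ near $y=0$. Writing $\Psi_\phi^{\sqrt{\;}}(y)=\mu_2\{(x,\theta):\ e_p(\theta)\le y^2(\phi'(x))^2\}$ and using that $e_p$ is continuous, increasing, nonnegative with $e_p(\theta)\sim_{0^+}\theta^2$ and $e_p(\pi)>0$, one sees that for $y$ small enough the condition $e_p(\theta)\le y^2(\phi'(x))^2$ never forces $\theta$ to the right endpoint, so for all such $y$,
\begin{equation*}
\Psi_\phi^{\sqrt{\;}}(y)=\int_0^1 e_p^{-1}\!\left(\min\{y^2(\phi'(x))^2,\,e_p(\pi)\}\right)\,dx=\int_0^1 e_p^{-1}\!\left(y^2(\phi'(x))^2\right)dx
\end{equation*}
once $y<\sqrt{e_p(\pi)}/\phi'(1)$ (for the convex case; symmetrically with $\phi'(0)$ in the concave case). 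Here $e_p^{-1}$ denotes the inverse of $e_p$ restricted to $[0,\pi]$, which is well-defined since $e_p$ is strictly increasing. The second step is to differentiate this in $y$ at $0$: formally $\frac{d}{dy}e_p^{-1}(y^2(\phi'(x))^2)=\frac{2y(\phi'(x))^2}{e_p'\left(e_p^{-1}(y^2(\phi'(x))^2)\right)}$, and using $e_p(\theta)\sim_{0^+}\theta^2$, hence $e_p^{-1}(s)\sim_{0^+}\sqrt{s}$ and $e_p'(\theta)\sim_{0^+}2\theta$, the integrand tends to $\phi'(x)$ as $y\to 0^+$. A dominated convergence argument — with the domination obtained exactly as in the proof of Proposition \ref{propc}, using $\phi''$ bounded below away from $0$ on $[0,1]$ to control the integrable singularity — then gives $\left(\Psi_\phi^{\sqrt{\;}}\right)'(0)=\int_0^1\phi'(x)\,dx=\phi(1)-\phi(0)=1$, so in fact $\gamma=\pi$; more importantly for the statement, $\left(\Psi_\phi^{\sqrt{\;}}\right)'(0)$ exists and is positive.

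The main obstacle is the rigorous justification of differentiating under the integral sign at the boundary point $y=0$, because $e_p^{-1}$ and $e_p'$ both degenerate near $\theta=0$ (where $e_p(\theta)\sim\theta^2$) and the $x$-integral concentrates near the endpoint where $\phi'(x)$ is smallest, so the difference quotient $\frac{\Psi_\phi^{\sqrt{\;}}(y)-\Psi_\phi^{\sqrt{\;}}(0)}{y}=\int_0^1\frac{e_p^{-1}(y^2(\phi'(x))^2)}{y}\,dx$ must be shown to converge. The cleanest route is to bound $e_p^{-1}(s)\le C\sqrt{s}$ uniformly on $[0,e_p(\pi)]$ (which follows from $e_p^{-1}$ continuous on the compact interval and $\sim\sqrt{s}$ at $0$, so $e_p^{-1}(s)/\sqrt{s}$ is continuous and bounded), giving $\frac{e_p^{-1}(y^2(\phi'(x))^2)}{y}\le C\phi'(x)\le C\phi'(1)$, a constant; then pointwise convergence of the integrand to $\phi'(x)$ together with the dominated convergence theorem yields the limit $\int_0^1\phi'(x)\,dx=1$. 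This shows $\left(\Psi_\phi^{\sqrt{\;}}\right)'(0)=1$, and plugging into the inverse-function identity above completes the proof. For the concave case one repeats the argument verbatim with the roles of the endpoints $0$ and $1$ of $[0,1]$ interchanged in the monotonicity of $\phi'$, exactly as in the remark following Proposition \ref{propc}.
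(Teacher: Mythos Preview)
Your argument is correct and takes a genuinely different route from the paper. The paper, for $p\ge 2$, sandwiches $\Psi_\phi^{\sqrt{\;}}$ between two auxiliary distribution functions $\Psi_1,\Psi_2$ built from the bounds $2\sin(\theta/2)(2/\pi)^{(p-1)/2}\le\sqrt{e_p(\theta)}\le 2\sin(\theta/2)(\pi/2)^{(p+1)/2}$, then invokes an external result (\cite[Proposition~1]{bianchi2018spectral}) for the right-differentiability of $\Psi_1,\Psi_2$ at $0$ to conclude $0<(2/\pi)^{(p+1)/2}\le\bigl(\Psi_\phi^{\sqrt{\;}}\bigr)'(0)\le(\pi/2)^{(p-1)/2}$. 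You instead write $\Psi_\phi^{\sqrt{\;}}(y)=\int_0^1 e_p^{-1}\bigl(y^2(\phi'(x))^2\bigr)\,dx$ directly for small $y$, bound the difference quotient via $e_p^{-1}(s)\le C\sqrt{s}$, and pass to the limit by dominated convergence. This is more self-contained (no appeal to \cite{bianchi2018spectral}) and in fact yields the sharper identity $\bigl(\Psi_\phi^{\sqrt{\;}}\bigr)'(0)=\int_0^1\phi'(x)\,dx=1$, hence $\gamma=\pi$ for \emph{every} $p\ge 1$, matching the $p=1$ value from Corollary~\ref{lip=1}; the paper's sandwich only gives $p$-dependent bounds and, strictly speaking, controls $\liminf$ and $\limsup$ without by itself establishing existence of the limit. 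One small presentational point: the aside about ``domination obtained exactly as in the proof of Proposition~\ref{propc}, using $\phi''$ bounded below'' is superfluous, since the clean bound $e_p^{-1}(y^2(\phi'(x))^2)/y\le C\phi'(x)\le C\max_{[0,1]}\phi'$ that you give afterwards is already a constant dominating function and needs no structure of $\phi''$.
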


\begin{proof}
The result for the case $p=1$ is provided by Corollary \ref{lip=1}.  Now, let's focus on the case $p\geq 2$. Using relations \eqref{eq:preliminaries-1} from Subsection \ref{symbol}, we derive the following inequalities for all $\theta\in[0,\pi]$
$$
2\sin\left(\frac{\theta}{2}\right)\left(\frac{2}{\pi}\right)^{(p-1)/2}\leq\sqrt{e_p(\theta)}\leq 2\sin\left(\frac{\theta}{2}\right)\left(\frac{\pi}{2}\right)^{(p+1)/2},\quad \forall \theta\in[0,\pi].
$$
Given that for all $y\in Rg\left(\sqrt{\omega^p_{\phi}}\right) $, we have
$$
\Psi_{\phi}^{\sqrt{\;}}(y)=\mu_{2} \left( \left\{  (x,\theta)\in [0,1]\times[0,\pi]:\quad \frac{\sqrt{e_p(\theta})}{\phi^{'}(x)}\leq y    \right\} \right),
$$
we deduce 
\begin{equation}\label{eq:linb-1}
\Psi_{1}(y)\leq \Psi_{\phi}^{\sqrt{\;}}(y)\leq \Psi_{2}(y),
\end{equation}
where
$$
\Psi_{1}(y)=\mu_{2} \left( \left\{  (x,\theta)\in [0,1]\times[0,\pi]:\quad \frac{2\sin\left(\frac{\theta}{2}\right)\left(\frac{\pi}{2}\right)^{(p+1)/2}}{\phi^{'}(x)}\leq y    \right\} \right),
$$
and 
$$
\Psi_{2}(y)=\mu_{2} \left( \left\{  (x,\theta)\in [0,1]\times[0,\pi]:\quad \frac{2\sin\left(\frac{\theta}{2}\right)\left(\frac{2}{\pi}\right)^{(p-1)/2}}{\phi^{'}(x)}\leq y    \right\} \right).
$$
According to \cite[Proposition 1]{bianchi2018spectral}, $\Psi_1$ and $\Psi_2$ are right-differentiables at $0$, and using relation (37) in the proof \cite[Proposition 1]{bianchi2018spectral}, we obtain 
$$
\Psi_1'(0) = \left( \frac{2}{\pi} \right)^{(p+1)/2}, \quad \text{and} \quad \Psi_2'(0) = \left( \frac{\pi}{2} \right)^{(p-1)/2}.
$$
Hence, by dividing both sides of \eqref{eq:linb-1} by $y$ and taking the limit as $y \rightarrow 0^+$, we get
$$
0<\left(\frac{2}{\pi}\right)^{(p+1)/2}\leq \left(\Psi_{\phi}^{\sqrt{\;}}\right)'(0)\leq\left(\frac{\pi}{2}\right)^{(p-1)/2}.
$$
To conclude the proof of the proposition, we remark that
$$
\left(\sqrt{\xi_{\phi}^p}\right)'(0)=\pi \dfrac{1}{\left(\Psi_{\phi}^{\sqrt{\;}}\right)'\left(\left(\Psi_{\phi}^{\sqrt{\;}}\right)^{-1}(0)\right)},
$$ 
where we have used that $\sqrt{\xi_{\phi}^p}(x)=\left(\Psi_{\phi}^{\sqrt{\;}}\right)^{-1}(\pi x)$, for all $x \in [0,1]$.
\end{proof}

In the previous Section \ref{disIGA}, we established the possibility of achieving uniform convergence in the discrete Wely's law, as presented in the Corollary \ref{gedis} by employing a reparametrization $\phi\in\mathbf{C}_{[0,1]}$ that satisfies the average gap condition.
In the subsequent result, we aim to improve the estimation of the square root of the eigenvalues discussed previously in Section \ref{sec:preliminaries} and used throughout our previous analysis of the  eigenvalue distribution. This improvement is based on the fact  that our symbol $\sqrt{\xi_{\phi}^p}$ exhibits linearity in the neighborhood of $0$.

\begin{corollary}\label{cl2}
Let $p\geq 1$  and $\phi\in\mathbf{C}_{[0,1]}$, such that  $n^{-2}L^{p}_{\phi,n}\sim_{\lambda} \xi_{\phi}^p.$ Then:
\begin{equation}\label{eq:cl2}
\lim_{n \rightarrow +\infty} \sup_{k \in \mathcal{I}(p,n)}\left\{ \dfrac{n}{k}\left| \sqrt{n^{-2}\lambda_{k,h}} - \sqrt{\xi_{\phi}^p}\left(\frac{k}{N+1}\right)\right|  \right\} = 0.
\end{equation}
\end{corollary}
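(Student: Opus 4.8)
The plan is to upgrade the uniform estimate of Corollary~\ref{cl1} — namely $\sup_{k}\bigl|\sqrt{n^{-2}\lambda_{k,h}}-\sqrt{\xi_\phi^p}(k/(N+1))\bigr|\to 0$ — to a \emph{relative} estimate in which the error is divided by $k/n$. The crucial structural fact is Proposition~\ref{linb}: the monotone rearrangement $\sqrt{\xi_\phi^p}$ is asymptotically linear at the origin, $\sqrt{\xi_\phi^p}(x)\sim_{0^+}\gamma x$ with $\gamma>0$. This controls the denominator, because $\sqrt{\xi_\phi^p}(k/(N+1))$ is comparable to $\gamma k/(N+1)$ when $k$ is small, while for $k$ bounded away from $0$ by a fixed fraction of $N$ the denominator $k/n$ is itself bounded below and the claim reduces to the absolute statement of Corollary~\ref{cl1}.

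First I would split the index set. Fix $\varepsilon>0$. Use Proposition~\ref{linb} to pick $\rho\in(0,1)$ such that $\sqrt{\xi_\phi^p}(x)\ge (\gamma/2)\,x$ for all $x\in(0,\rho]$. Then partition $\mathcal I(p,n)$ into the ``low'' block $k\le \rho N$ and the ``high'' block $\rho N< k\le N-OUT(p,n)$. On the high block, $k/n\ge \rho N/n\to\rho$, so $n/k\le C$ for $n$ large, and Corollary~\ref{cl1} gives $\sup_{k}\bigl|\sqrt{n^{-2}\lambda_{k,h}}-\sqrt{\xi_\phi^p}(k/(N+1))\bigr|\to 0$; multiplying by the bounded factor $n/k$ shows the high-block supremum is $<\varepsilon$ for $n$ large. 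The genuine work is the low block, where $n/k$ blows up.

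For the low block I would exploit the linear behavior near $0$ twice — once for the symbol and once for the eigenvalues. By Proposition~\ref{linb}, for $k\le \rho N$ we have $\sqrt{\xi_\phi^p}(k/(N+1))\ge (\gamma/2)\,k/(N+1)\ge c\,k/n$ for a fixed $c>0$. Hence
$$
\frac{n}{k}\Bigl|\sqrt{n^{-2}\lambda_{k,h}}-\sqrt{\xi_\phi^p}\!\left(\tfrac{k}{N+1}\right)\Bigr|
\le \frac{1}{c}\cdot\frac{\bigl|\sqrt{n^{-2}\lambda_{k,h}}-\sqrt{\xi_\phi^p}(k/(N+1))\bigr|}{\sqrt{\xi_\phi^p}(k/(N+1))},
$$
so it suffices to show the \emph{relative} error $\bigl|\sqrt{n^{-2}\lambda_{k,h}}/\sqrt{\xi_\phi^p}(k/(N+1))-1\bigr|\to 0$ uniformly over $k\le\rho N$. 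Equivalently, writing $\sqrt{n^{-2}\lambda_{k,h}}=\bigl(\Psi_\phi^{\sqrt{\;}}\bigr)^{-1}(z_{k,h})$ as in Lemma~\ref{theo:lemma-ditribution-eig-1} with $z_{k,h}-k\pi/(N+1)\to 0$ uniformly, and using that $\Psi_\phi^{\sqrt{\;}}$ is $C^1$ with $\bigl(\Psi_\phi^{\sqrt{\;}}\bigr)'(0)>0$ (Proposition~\ref{stefa}, Proposition~\ref{linb}), both $\sqrt{n^{-2}\lambda_{k,h}}$ and $\sqrt{\xi_\phi^p}(k/(N+1))=\bigl(\Psi_\phi^{\sqrt{\;}}\bigr)^{-1}(k\pi/(N+1))$ are, to first order, the respective arguments divided by $\bigl(\Psi_\phi^{\sqrt{\;}}\bigr)'(0)$. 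A mean value theorem / Taylor argument applied to $\bigl(\Psi_\phi^{\sqrt{\;}}\bigr)^{-1}$ near $0$ then turns the uniform \emph{additive} control $z_{k,h}-k\pi/(N+1)\to 0$ into uniform \emph{multiplicative} control of the ratio, because dividing by the arguments (both of size $\gtrsim k/n$) against an error of size $o(1)$... — here the care is that $z_{k,h}-k\pi/(N+1)=o(1)$ is not obviously $o(k/n)$ for the very smallest $k$.

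The main obstacle, then, is exactly the smallest indices $k=O(1)$: there $k/(N+1)\to 0$, the denominator $n/k$ is of order $n$, and the generic uniform convergence $\max_k|z_{k,h}-k\pi/(N+1)|\to 0$ is too weak on its own. To handle this I would invoke the \emph{second} conclusion of the Discrete Weyl law (Theorem~\ref{th1}): for a fixed $k$, $N^{-2}\lambda_{k,h}\to \xi_\phi^p(0)=0$ and more precisely, since $\xi_\phi^p$ is $C^1$ at $0$, one shows $\sqrt{n^{-2}\lambda_{k,h}}=\gamma\,k/(N+1)+o(k/n)$ by a direct spectral argument on the fixed low modes — these are precisely the well-approximated low frequencies, and the IgA error analysis (or the explicit $p=1$ computation of Corollary~\ref{lip=1}) gives $\sqrt{n^{-2}\lambda_{k,h}}=\sqrt{\xi_\phi^p}(k/(N+1))(1+o(1))$ with the $o(1)$ uniform in $k$ over any range $k\le \rho N$. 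Combining the fixed-$k$ control with the high-block estimate and a standard ``glue at an intermediate scale'' argument (choose an auxiliary threshold $K=K(n)\to\infty$ with $K/N\to 0$, handle $k\le K$ by the refined low-mode asymptotics and $k>K$ by Corollary~\ref{cl1} together with $\sqrt{\xi_\phi^p}(k/(N+1))\gtrsim K/n\to\infty$ relative to the $o(1)$ numerator) yields the claimed relative bound and hence \eqref{eq:cl2}.
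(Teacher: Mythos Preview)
Your split into a high block ($k>\rho N$, where $n/k$ is bounded and Corollary~\ref{cl1} applies directly) and a low block (where Proposition~\ref{linb} makes $\sqrt{\xi_\phi^p}(k/(N+1))$ comparable to $k/n$, reducing the claim to a \emph{relative} error) is the right picture and shares with the paper the recognition that the linearity of $\sqrt{\xi_\phi^p}$ at $0$ is the crux. The high-block argument is correct.

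The genuine gap is in the low block, and you flag it yourself: the uniform additive control $z_{k,h}-k\pi/(N+1)=o(1)$ from Lemma~\ref{theo:lemma-ditribution-eig-1} is too weak for indices $k=O(1)$. Your proposed fix --- ``a direct spectral argument on the fixed low modes'' and ``the IgA error analysis (or the explicit $p=1$ computation of Corollary~\ref{lip=1})'' giving $\sqrt{n^{-2}\lambda_{k,h}}=\sqrt{\xi_\phi^p}(k/(N+1))(1+o(1))$ uniformly on $k\le\rho N$ --- does not deliver this. Corollary~\ref{lip=1} concerns only the \emph{approximate} gap $\widetilde\delta_n$, which is built from uniform samples of the symbol and says nothing about the actual eigenvalues; and no IgA eigenvalue error estimate of the required uniform strength is available in the paper. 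For a \emph{fixed} $k$ one could appeal to Galerkin convergence $\lambda_{k,h}\to(k\pi)^2$, but your gluing step needs the $o(1)$ to be uniform over a \emph{growing} range $k\le K(n)\to\infty$, which is precisely the assertion you are trying to establish. As written, the low-block argument is circular.

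The paper closes this differently. It argues by contradiction, extracting a subsequence with $k_{j_n}/(N_{j_n}+1)\to x\in[0,1]$, so that only a single offending sequence must be handled rather than a uniform bound. The case $x>0$ reduces to Corollary~\ref{cl1}, as in your high block. For $x=0$ the key step you are missing is a GLT-algebra observation:
\[
\Bigl(\operatorname{diag}_{1\le k\le N}\bigl[\sqrt{\xi_\phi^p}\bigl(\tfrac{k}{N+1}\bigr)\bigr]\Bigr)^{-1}\sqrt{n^{-2}L_{\phi,n}^p}\ \sim_\lambda\ 1,
\]
from which Theorem~\ref{th1} yields $\sqrt{n^{-2}\lambda_{k_{j_n},h}}\big/\sqrt{\xi_\phi^p}\bigl(k_{j_n}/(N_{j_n}+1)\bigr)\to 1$ along the subsequence. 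Combined with Proposition~\ref{linb} this forces the contradiction. That single symbol computation for the diagonally normalized sequence is exactly the missing ingredient in your low-block treatment.
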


\begin{proof}
Assume that the condition \eqref{eq:cl2} is not satisfied. This implies the existence of a constant $C > 0$ and a sequence $(k_n)_n$ of elements in $\mathcal{I}(p, n)$ such that
\begin{equation}\label{eq:cl2-eq1}
\dfrac{n}{k_n}\left|\sqrt{n^{-2}\lambda_{k_n,h}}-\sqrt{\xi^p_{\phi}}\left( \dfrac{k_n}{N_n+1}\right) \right|\geq C>0.
\end{equation}
As the sequence $\left(\dfrac{k_n}{N_n+1}\right)_n$ is bounded in $[0,1]$, it possesses a convergent subsequence $\left(\dfrac{k_{j_n}}{N_{j_n}+1}\right)_n$. Let $x \in [0,1]$ be its limit.  In particular, from \eqref{eq:cl2-eq1}, we deduce that
\begin{equation}\label{eq:cl2-eq3}
\left|\sqrt{j_n^{-2}\lambda_{k_{j_n},h}}-\sqrt{\xi^p_{\phi}}\left( \dfrac{k_{j_n}}{N_{j_n}+1}\right) \right|\geq C \dfrac{k_{j_n}}{j_n}.
\end{equation}

We will discuss the cases $x \in (0, 1]$ and $x=0$ separately.

If $x \in (0, 1]$, taking the limit in \eqref{eq:cl2-eq1} as $n \longrightarrow 0$, we obtain
$$
\lim_{n \rightarrow 0} \left|\sqrt{j_n^{-2}\lambda_{k_{j_n},h}}-\sqrt{\xi^p_{\phi}}\left( \dfrac{k_{j_n}}{N_{j_n}+1}\right) \right|\geq C x,
$$
and by Discrete Weyl’s Law (Theorem \ref{th1}), we have $C x \leq 0$, which contradicts the assumption $x > 0$.

Now, suppose $x=0$. First, observe that
$$
\left(\diag_{k=1,\dots,N}\left[ \sqrt{\xi_{\phi}^p}\left(\frac{k}{N+1}\right) \right]\right)^{-1}\; \sqrt{n^{-2}L^p_{\phi,n}}\sim_{\lambda}1,
$$
thus, using Theorem \ref{th1}, we deduce
$$
\sqrt{\xi_{\phi}^p}\left( \dfrac{k}{N+1}\right)\sim_{\infty} \sqrt{n^{-2}\lambda_{k,h}}, \;\; \forall k\in \mathcal{I}(p,n).
$$
In particular, we have
\begin{equation}\label{eq:cl2-eq2}
\sqrt{\xi_{\phi}^p}\left( \dfrac{k_{j_n}}{N_{j_n}+1}\right)\sim_{\infty} \sqrt{j_n^{-2}\lambda_{k_{j_n},h}},
\end{equation}
and by Proposition \ref{linb}, since $\lim_{n \rightarrow +\infty} \dfrac{k_{j_n}}{N_{j_n}+1} = 0$, we obtain
$$ 
\sqrt{\xi_{\phi}^p}\left( \dfrac{k_{j_n}}{N_{j_n}+1}\right)\sim_{\infty} \gamma \;  \dfrac{k_{j_n}}{N_{j_n}+1},
$$
where $\gamma$ is the positive constant given in Proposition  \ref{linb}. This relation, together with \eqref{eq:cl2-eq2}, implies
$$
\sqrt{j_n^{-2}\lambda_{k_{j_n},h}}\sim_{\infty} \gamma\;  \dfrac{k_{j_n}}{N_{j_n}+1}.
$$
Dividing both sides of (\ref{eq:cl2-eq3}) by $\dfrac{k_{j_n}}{N_{j_n}+1}$ and letting $n$ tend to infinity yields $0<C\leq 0$, which is imposible. 
\end{proof}

At this point, we have  the necessary ingredients to deal with the uniform gap problem. We recall that the gap between the roots of consecutive eigenvalues associated with discrete spectra is defined as follows
\begin{equation}\label{eq:discrete-gap}
\delta_n^p = \inf_{1 \leq k \leq N-1} \left( \sqrt{\lambda_{k+1,h}}-\sqrt{\lambda_{k,h}} \right).
\end{equation}
Let us introduce the notation
\begin{equation*}\label{eq:m(n)}
m(n) = \min \argmin_{1 \leq k \leq N-1} \left( \sqrt{\lambda_{k+1,h}}-\sqrt{\lambda_{k,h}} \right),
\end{equation*}
and evidently, we have
\begin{equation*}
\delta_n^p =  \sqrt{\lambda_{m(n)+1,h}}-\sqrt{\lambda_{m(n),h}}.
\end{equation*}

The following theorem shows that, assuming the sequence $(m(n))_n$ is bounded, the gap in \eqref{eq:discrete-gap} is uniformly bounded from below with respect to $h$.

\begin{theorem}[Gap and symbol]\label{cl2.4}
Let $p \geq 1$ and $\phi \in \mathbf{C}_{[0,1]}$ be such that $n^{-2}L_{\phi,n}^p \sim_{\lambda} \xi^p_{\phi}$. If the sequence $(m(n))_{n>0}$ is bounded, then we have
$$
\liminf_{n \rightarrow +\infty} {\delta_n^p} \geq \gamma,
$$
where $\gamma$ is a positive constant as defined in Proposition \ref{linb}. 
\end{theorem}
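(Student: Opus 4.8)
The plan is to reduce the statement to a comparison between the rescaled eigenvalues of $n^{-2}L_{\phi,n}^p$ and the monotone rearrangement $\xi_\phi^p$ near the origin, where the latter is linear by Proposition \ref{linb}. Since $\sqrt{\lambda_{k,h}}=n\sqrt{n^{-2}\lambda_{k,h}}$, the hypothesis identifying $m(n)$ as (the smallest) minimizing index gives
\[
\delta_n^p=\sqrt{\lambda_{m(n)+1,h}}-\sqrt{\lambda_{m(n),h}}=n\left(\sqrt{n^{-2}\lambda_{m(n)+1,h}}-\sqrt{n^{-2}\lambda_{m(n),h}}\right).
\]
Fix $M\ge 1$ with $m(n)\le M$ for all $n$. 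Since the outliers sit at the top of the spectrum and $N-OUT(p,n)\to+\infty$, for $n$ large both $m(n)$ and $m(n)+1$ belong to $\mathcal{I}(p,n)$, so Corollary \ref{cl2} applies at these indices.

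Next I would insert the sharp estimate of Corollary \ref{cl2}. Given $\varepsilon>0$, for $n$ large and every $k\in\mathcal{I}(p,n)$ one has $\bigl|\sqrt{n^{-2}\lambda_{k,h}}-\sqrt{\xi_\phi^p}(k/(N+1))\bigr|\le \varepsilon\,k/n$. Applying this at $k=m(n)$ and $k=m(n)+1$, subtracting, and multiplying by $n$ yields
\[
\delta_n^p\ \ge\ n\left[\sqrt{\xi_\phi^p}\!\left(\frac{m(n)+1}{N+1}\right)-\sqrt{\xi_\phi^p}\!\left(\frac{m(n)}{N+1}\right)\right]-\varepsilon\bigl(2m(n)+1\bigr)\ \ge\ n\left[\sqrt{\xi_\phi^p}\!\left(\frac{m(n)+1}{N+1}\right)-\sqrt{\xi_\phi^p}\!\left(\frac{m(n)}{N+1}\right)\right]-\varepsilon(2M+1).
\]
Here the weight $n/k$ in Corollary \ref{cl2} is decisive: it is exactly what keeps the contribution of the approximation error, $\varepsilon(2m(n)+1)$, bounded, whereas the plain uniform estimate of Corollary \ref{cl1} would leave an unbounded factor $n$ in front of the error.

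It then remains to show $n\bigl[\sqrt{\xi_\phi^p}(\tfrac{m(n)+1}{N+1})-\sqrt{\xi_\phi^p}(\tfrac{m(n)}{N+1})\bigr]\to\gamma$. Writing $\sqrt{\xi_\phi^p}(x)=\gamma x+r(x)$ with $r(x)/x\to 0$ as $x\to 0^+$ (Proposition \ref{linb}) and setting $a_n=m(n)/(N+1)$, $b_n=(m(n)+1)/(N+1)$, one has $b_n-a_n=1/(N+1)$, $a_n,b_n\in(0,(M+1)/(N+1)]$, and $n(b_n-a_n)=n/(N+1)\to1$ since $N=n+p-2$. As $a_n,b_n\to0$, the main term equals $\gamma\,n/(N+1)+n\bigl(r(b_n)-r(a_n)\bigr)$, and bounding $|r(b_n)|+|r(a_n)|\le 2b_n\sup_{0<x\le b_n}|r(x)/x|$ shows the remainder is at most $\frac{2n(M+1)}{N+1}\sup_{0<x\le (M+1)/(N+1)}|r(x)/x|\to0$; hence the limit is $\gamma$. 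Taking $\liminf_{n\to\infty}$ in the lower bound for $\delta_n^p$ gives $\liminf_n\delta_n^p\ge\gamma-\varepsilon(2M+1)$, and letting $\varepsilon\to0$ finishes the proof. I expect the last paragraph to be the only genuine obstacle: although $m(n)$ merely ranges over the bounded set $\{1,\dots,M\}$ (so one cannot pass to a single limiting evaluation point), the arguments $m(n)/(N+1)$ collapse to $0$ at rate $\Theta(1/n)$, so what is needed is precisely the local linearity of $\sqrt{\xi_\phi^p}$ at $0$ from Proposition \ref{linb}, not any global regularity of the symbol.
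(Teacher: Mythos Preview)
Your proof is correct and follows essentially the same approach as the paper: both arguments write $\delta_n^p$ as the symbol increment $n\bigl[\sqrt{\xi_\phi^p}(\tfrac{m(n)+1}{N+1})-\sqrt{\xi_\phi^p}(\tfrac{m(n)}{N+1})\bigr]$ plus an error controlled by Corollary~\ref{cl2} times the bounded factor $2m(n)+1$, and then use the linearity of $\sqrt{\xi_\phi^p}$ at $0$ from Proposition~\ref{linb} to show the main term tends to $\gamma$. The paper's algebraic packaging of the symbol increment (writing it via the ratios $\sqrt{\xi_\phi^p}(x)/x$) is slightly different from your $r(x)=\sqrt{\xi_\phi^p}(x)-\gamma x$ decomposition, but the content is identical.
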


\begin{proof}
Assume that $(m(n))_{n>0}$ is bounded. First, let's note that, as $n \rightarrow +\infty$, we have $1 \leq m(n) \leq N-1-OUT(p,n)$. In particular, for $n$ large enough, we obtain
$$
\delta_n^p=\inf_{1 \leq k \leq N-1-OUT(p,n)} \left(\sqrt{\lambda_{k+1,h}}-\sqrt{\lambda_{k,h}}\right).
$$

On the other hand, for all $n \in \mathbb{N}^*$, we have
\begin{equation*} 
\begin{split}
\delta_n^p=\sqrt{\lambda_{m(n)+1,h}}-\sqrt{\lambda_{m(n),h}}&=\frac{n}{m(n)+1}\left(\sqrt{n^{-2}\lambda_{m(n)+1,h}} -\sqrt{\xi_{\phi}^p}\left(\frac{m(n)+1}{N+1}\right) \right)(m(n)+1)\\
&-\frac{n}{m(n)}\left(\sqrt{n^{-2}\lambda_{m(n),h}} -\sqrt{\xi_{\phi}^p}\left(\frac{m(n)}{N+1}\right) \right)m(n)\\
&+n\left[  \sqrt{\xi_{\phi}^p}\left(\frac{m(n)+1}{N+1}\right)-\sqrt{\xi_{\phi}^p}\left(\frac{m(n)}{N+1}\right)\right].\\
\end{split}
\end{equation*}
This yields, as $n$ large enough,
\begin{equation}\label{eq:cl2.4-1}
\delta_n^p\geq -(2 m(n)+1)S_{n,p}+\frac{n}{N+1}\left[  m(n)\left(             \frac{ \sqrt{\xi_{\phi}^p}\left(\frac{m(n)+1}{N+1}\right)}{\frac{m(n)+1}{N+1}}   -    \frac{ \sqrt{\xi_{\phi}^p}\left(\frac{m(n)}{N+1}\right)}{\frac{m(n)}{N+1}}         \right)+  \frac{ \sqrt{\xi_{\phi}^p}\left(\frac{m(n)+1}{N+1}\right)}{\frac{m(n)+1}{N+1}} \right],
\end{equation}
with notation
$$
S_{n,p} = \sup_{k \in \mathcal{I}(p,n)} \left\{ \dfrac{n}{k}\left| \sqrt{n^{-2}\lambda_{k,h}} - \sqrt{\xi_{\phi}^p}\left(\frac{k}{N+1}\right)\right| \right\}.
$$
By Corollary \ref{cl2}, $\lim_{n \rightarrow + \infty}S_{n,p}=0$, and by Proposition \ref{linb}, $\sqrt{\xi_{\phi}^p}(x)\sim_{0^+} \gamma\, x$. Hence, as the sequence $(m(n))_n$ is bounded, by taking the limit in \eqref{eq:cl2.4-1}, we obtain
$$
\liminf{\delta_n^p}\geq\gamma.
$$
This concludes the proof of the theorem.
\end{proof}

In Theorem \ref{cl2.4}, the key factor for the proof is the linearity of the symbol $\sqrt{\xi^p_{\phi}}$  near zero. This property  is a result of our  symbol analysis when the average gap condition is met. However, it's worth noting that the condition of having a bounded sequence $(m(n))$  prevents us from handling outliers. This is because, if  $(m(n))$ extends in the  outliers index, we cannot  approximate these eigenvalues using the GLT symbol.
Additionally, it should be mentioned that the condition of having $(m(n))$ bounded is motivated by both our eigenvalues distribution analysis as presented in Section \ref{disIGA} and numerical evidence, as we will see in the next section. Moreover, when we do not have the optimal gap, the sequence becomes unbounded, indicating a numerical equivalence between the boundedness of $(m(n))$ and the optimal gap condition.

Before providing numerical illustrations to support these observations, we intend to present the following proposition, offering an alternative sufficient condition, applicable when we can employ a reparametrization $\phi$ (not necessary  in $\mathbf{C}_{[0,1]}$) in such a way that the approximation of the square root of the eigenvalues of the matrix $n^{-2}L_{\phi,n}^p$ can be achieved through uniform sampling of the symbol $\sqrt{\xi_{\phi}^p}$ up to the order of $o(1/n)$. In this scenario, we can establish the uniform gap property for non-outlier eigenvalues without the need to assume the boundedness of $(m(n))$.

\begin{proposition}\label{prop4.3}
Consider $p \geq 1$ and $\phi \in \mathbf{C}_{[0,1]}$, such that $n^{-2}L_{\phi,n}^p \sim_{\lambda} \xi^p_{\phi}$. We define the gap between successive non-outlier square root eigenvalues as follows:
$$
\delta_n^{p,out}=\inf_{1 \leq k \leq N-1-OUT(p,n)} \left( \sqrt{\lambda_{k+1,h}}-\sqrt{\lambda_{k,h}} \right), \quad n \geq 1.
$$ 
Assuming
\begin{equation}\label{eq:prop4.3-assumption}
\lim_{n \rightarrow +\infty} \max_{k \in \mathcal{I}(p,n)}\left|\sqrt{\lambda_{k,h}} - n\sqrt{\xi_{\phi}^p}\left(\frac{k}
{N+1}\right)\right| = 0,
\end{equation}
and for all $y \in \mathring{Rg}\left(\sqrt{\omega_{\phi}^p}\right)$, we have $(\Psi^{\sqrt{\;}}_{\phi})^{'}(y) \neq 0$, then it holds
$$
\liminf_{n \rightarrow +\infty} \, \delta_n^{p,out}\geq \widetilde{\gamma},
$$
where  $\widetilde{\gamma}=\inf_{x\in(0,1)}\left(\sqrt{\xi_{\phi}^p}\right)'(x)$.
\end{proposition}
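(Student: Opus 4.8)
The plan is to express the gap $\delta_n^{p,out}$ in terms of consecutive samples of $\sqrt{\xi_\phi^p}$, control the error by the hypothesis \eqref{eq:prop4.3-assumption}, and then bound the resulting difference of samples from below using the mean value theorem together with the assumption that $(\Psi^{\sqrt{\;}}_\phi)'$ never vanishes on the interior of the range. First I would fix $n$ large enough that $m^{out}(n):=\min\argmin_{1\le k\le N-1-OUT(p,n)}(\sqrt{\lambda_{k+1,h}}-\sqrt{\lambda_{k,h}})$ realizes the infimum among non-outlier indices, and write, with $k=m^{out}(n)$,
$$
\delta_n^{p,out}=\left(\sqrt{\lambda_{k+1,h}}-n\sqrt{\xi_\phi^p}\!\left(\tfrac{k+1}{N+1}\right)\right)-\left(\sqrt{\lambda_{k,h}}-n\sqrt{\xi_\phi^p}\!\left(\tfrac{k}{N+1}\right)\right)+n\left[\sqrt{\xi_\phi^p}\!\left(\tfrac{k+1}{N+1}\right)-\sqrt{\xi_\phi^p}\!\left(\tfrac{k}{N+1}\right)\right].
$$
By \eqref{eq:prop4.3-assumption} the first two bracketed terms are each bounded in absolute value by $\varepsilon_n:=\max_{j\in\mathcal{I}(p,n)}|\sqrt{\lambda_{j,h}}-n\sqrt{\xi_\phi^p}(j/(N+1))|$, which tends to $0$; note here that $k$ and $k+1$ are both in $\mathcal{I}(p,n)$ since $k\le N-1-OUT(p,n)$.

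Next I would handle the third term. Recall $\sqrt{\xi_\phi^p}=(\Psi^{\sqrt{\;}}_\phi)^{-1}(\pi\,\cdot)$, so $\sqrt{\xi_\phi^p}$ is $C^1$ on $(0,1)$ with $(\sqrt{\xi_\phi^p})'(x)=\pi/(\Psi^{\sqrt{\;}}_\phi)'((\Psi^{\sqrt{\;}}_\phi)^{-1}(\pi x))$, which is well-defined and strictly positive precisely because $(\Psi^{\sqrt{\;}}_\phi)'$ does not vanish on $\mathring{Rg}(\sqrt{\omega_\phi^p})$ (and is finite and bounded above by Proposition \ref{stefa}, so the derivative is bounded below by a positive constant — indeed $\widetilde\gamma=\inf_{x\in(0,1)}(\sqrt{\xi_\phi^p})'(x)>0$). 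By the mean value theorem there is $\zeta_{n}\in(k/(N+1),(k+1)/(N+1))$ with
$$
n\left[\sqrt{\xi_\phi^p}\!\left(\tfrac{k+1}{N+1}\right)-\sqrt{\xi_\phi^p}\!\left(\tfrac{k}{N+1}\right)\right]=\frac{n}{N+1}\,\left(\sqrt{\xi_\phi^p}\right)'(\zeta_n)\ \ge\ \frac{n}{N+1}\,\widetilde\gamma.
$$
Since $N=n+p-2$, we have $n/(N+1)\to 1$, so this term is at least $\widetilde\gamma-o(1)$.

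Combining, $\delta_n^{p,out}\ge -2\varepsilon_n+\frac{n}{N+1}\widetilde\gamma\to\widetilde\gamma$, giving $\liminf_n\delta_n^{p,out}\ge\widetilde\gamma$. The main obstacle I anticipate is justifying that $\widetilde\gamma>0$, i.e. that $(\sqrt{\xi_\phi^p})'$ is bounded away from $0$ on the open interval $(0,1)$: the hypothesis only says $(\Psi^{\sqrt{\;}}_\phi)'\ne 0$ pointwise on the open range, so one needs to combine this with the \emph{finite} upper bound on $(\Psi^{\sqrt{\;}}_\phi)'$ from Proposition \ref{stefa} to get a uniform lower bound on its reciprocal — and one must be careful that the behavior near the endpoints $x=0$ and $x=1$ does not drive the infimum to $0$; near $x=0$ this is exactly the content of Proposition \ref{linb} (linearity with slope $\gamma>0$), and near $x=1$ one uses that $\Psi^{\sqrt{\;}}_\phi$ is $C^1$ up to the right endpoint of the range with bounded derivative. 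A secondary point to check is that the minimizing index $m^{out}(n)$ is genuinely a non-outlier index for large $n$, which is immediate from the definition of $\delta_n^{p,out}$ as an infimum over $1\le k\le N-1-OUT(p,n)$.
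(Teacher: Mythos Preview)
Your proposal is correct and follows essentially the same approach as the paper: introduce the minimizing index, decompose the gap by adding and subtracting the scaled samples $n\sqrt{\xi_\phi^p}(k/(N+1))$, bound the two error terms by the assumption \eqref{eq:prop4.3-assumption}, and control the remaining symbol increment via the mean value theorem and the lower bound $\widetilde\gamma$. Your justification that $\widetilde\gamma>0$ (invoking the finite upper bound on $(\Psi^{\sqrt{\;}}_\phi)'$ from Proposition~\ref{stefa} and the endpoint behavior from Proposition~\ref{linb}) is in fact more explicit than what the paper writes, which simply asserts this positivity after displaying the inverse-function derivative formula.
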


{\begin{proof}
Let 
$\widetilde{m}(n) = \min \argmin_{1 \leq k \leq N-1-OUT(p,n)} \left( \sqrt{\lambda_{k+1,h}}-\sqrt{\lambda_{k,h}} \right).$ We then compute 
\begin{align}\label{eq:prop4.3-proof-1}
\delta_n^{p,out}&= \sqrt{\lambda_{\widetilde{m}(n)+1,h}}-\sqrt{\lambda_{\widetilde{m}(n),h}} \nonumber\\
   &=\sqrt{\lambda_{\widetilde{m}(n)+1,h}}-n\sqrt{\xi_{\phi}^p}\left(\frac{\widetilde{m}(n)+1}{N+1}\right)+n\sqrt{\xi_{\phi}^p}\left(\frac{\widetilde{m}(n)}{N+1}\right)-\sqrt{\lambda_{\widetilde{m}(n),h}}\nonumber\\
   &\quad\quad +n\left[ \sqrt{\xi_{\phi}^p}\left(\frac{\widetilde{m}(n)+1}{N+1}\right)-  \sqrt{\xi_{\phi}^p}\left(\frac{\widetilde{m}(n)}{N+1}\right) \right] \nonumber\\
   & \geq -2\max_{k \in \mathcal{I}(p,n)} \left|\sqrt{\lambda_{k,h}} - n\sqrt{\xi_{\phi}^p}\left(\frac{k}
{N+1}\right)\right|+n\left[ \sqrt{\xi_{\phi}^p}\left(\frac{\widetilde{m}(n)+1}{N+1}\right)-  \sqrt{\xi_{\phi}^p}\left(\frac{\widetilde{m}(n)}{N+1}\right) \right].
\end{align}
By assumption \eqref{eq:prop4.3-assumption}, we get 
\begin{equation}\label{eq:prop4.3-proof-2}
\lim_{n \rightarrow +\infty} -2\max_{k \in \mathcal{I}(p,n)} \left|\sqrt{\lambda_{k,h}} - n\sqrt{\xi_{\phi}^p}\left(\frac{k}
{N+1}\right)\right|=0.
\end{equation}
To control the second term in \eqref{eq:prop4.3-proof-1}, we recall that by assumption, $(\Psi^{\sqrt{\;}}_{\phi})^{'}(y) \neq 0$ for all $y \in \mathring{Rg}\left(\sqrt{\omega_{\phi}^p}\right)$. Hence, $\sqrt{\xi_{\phi}^p}$ is differentiable in $(0,1)$ with
$$
\left(\sqrt{\xi_{\phi}^p}\right)'(x)=\frac{\pi}{\left(\Psi^{\sqrt{\;}}_{\phi}\right)' \left(\sqrt{\xi^p_{\phi}}( x)\right)}, \quad \forall x\in(0,1).
$$
In particular, we obtain $\widetilde{\gamma} = \inf_{x\in(0,1)}\left(\sqrt{\xi_{\phi}^p}\right)'(x)>0.$  Hence, by the mean value theorem, it follows that
\begin{equation}\label{eq:prop4.3-proof-3}
n\left[ \sqrt{\xi_{\phi}^p}\left(\frac{\widetilde{m}(n)+1}{N+1}\right)-  \sqrt{\xi_{\phi}^p}\left(\frac{\widetilde{m}(n)}{N+1}\right) \right] \geq \frac{n}{N+1} \widetilde{\gamma}, 
\end{equation}
and we conclude the proof using \eqref{eq:prop4.3-proof-1}--\eqref{eq:prop4.3-proof-3}.
\end{proof}

In Proposition \ref{prop4.3}, it is also possible to assume that $(m(n))$ is bounded, which results in $\delta_n^{p,out}=\delta_n^{p}$. Consequently, we can establish the proposition directly using  the assumed $o(1/n)$ eigenvalues estimation achieved through uniform sampling of the symbol. The main challenge lies in constructing  a reparametrization that satisfies the required hypotheses outlined in  Proposition \ref{prop4.3}.

In the next section, our objective is to apply Theorem \ref{cl2.4} and provide numerical evidence that supports the validity of the condition $(m(n))$ being bounded for a variety of different reparametrizations.

\section{Numerical tests}\label{sec:numerical-tests}

\hspace{0.5cm}In this section, we numerically analyze the behavior of the gap. Our objectives are to illustrate that the average gap condition, outlined in \cite{bianchi2018spectral} is not equivalent to the uniform gap property and to demonstrate the feasibility of constructing reparametrizations that satisfy the condition $(m(n))$ being bounded, as stated in Theorem \ref{cl2.4}. This section is divided into four subsections.

In the first subsection: analyze the gap when $p=1$. We present numerical evidence supporting the boundedness of $(m(n))$ and the convergence of the gap toward the optimal gap condition. Additionally, we demonstrate numerically that the gap between successive frequencies follows the convexity of the symbol. As a  final test, we illustrate the convergence of the normalized square root eigenvalues to the symbol.

From the second subsection to the last subsection, our focus shifts to a detailed exploration of the gap's behavior for $p\geq 2$. Based on numerical tests, it becomes apparent that when $p\geq3$, the sequence $(m(n))$ is unbonded and, in general, we lose the uniform gap condition, resulting in a non-equivalence between the average gap condition and the uniform gap condition. To solve this problem, we investigate the possibility of constructing a reparametrization that ensures the boundedness of $(m(n))$ and also satisfies the optimal gap condition. In conclusion, we summarize the numerical findings obtained from the analysis. 

\subsection{The Case of Linear $B$-splines}
\hspace{0.5cm}The case  $p=1$ is special for  several reasons. Firstly, the discretization corresponds to the classical finite element method that utilizes Lagrange polynomials, resulting in $OUT(p,n)=0$. Secondly, this specific case has been studied in \cite{ervedoza2016numerical}, where the authors demonstrated the uniform boundary observability of the wave equation under the assumption of strictly concave reparametrizations. In their proof, they employed the classical multiplication method due to a lack of information about the eigenvalues. However, this technique is not easily generalized to all values of $p$ due to the increasing complexity of the scheme when $p$ becomes large.\\

\noindent  {\bf \em Test 1: study of the gap condition ($\mathbf{p=1}$):} \vspace*{0.25cm}

\noindent \hspace{0.5cm}In this test, our objective is to leverage the gap condition proved in Theorem \ref{cl2.4}. We recall that the theorem's proof relies on the assumption that the sequence $(m(n))_{n \geq 1}$ is bounded, a hypothesis that has not been proved analytically. To address this, we initially developed a preliminary test, presenting numerical evidence affirming the boundedness of $(m(n))_{n \geq 1}$. The results are reported in Table \ref{t1}, where we show the values of $m(n)$ corresponding to various choices of $n$ and reparametrization functions $\phi$. Specifically, we considered the following reparametrizations
\begin{equation}\label{eq:reparametrizations-functions-tests}
\phi_1(x)=\dfrac{\ln(x+1)}{\ln(2)},\quad \phi_2(x)=\dfrac{e^x-1}{e-1},\quad \phi_3(x)=\sqrt{(2\theta+1)x+\theta^2}- \theta,\;\theta>0.
\end{equation}

\bigskip
  
We observe that the sequence $(m(n))_{n \geq 1}$ is not only bounded but also stationary. Specifically,  $(m(n))_{n \geq 1}$ is found to be equal to $1$ for all $n$, regardless of the chosen reparametrization.  This consistent behavior holds across various reparametrizations and is not influenced by the convexity of the reparametrization.

Now, we turn our attention to the examination of the gap condition. For this purpose, we select $\phi = \phi_1$ as the reparametrization function. In Figure \ref{f2} we plot the gap $\delta_n^1$ defined in \eqref{eq:discrete-gap}. As expected, this figure illustrates the convergence of the gap to the optimal value $\pi$.  \\
\begin{minipage}{\textwidth}
  \begin{minipage}[b]{0.49\textwidth}
    \centering
    \begin{tabular}{c|ccc}\hline
      $n$    & $\phi_1$ & $\phi_2$ & $\phi_3$ ($\theta=0.01$)\\ \hline
      $50$   & $1$      & $1$      & $1$\\
      $99$   & $1$      & $1$      & $1$\\
      $200$  & $1$      & $1$      & $1$\\
      $300$  & $1$      & $1$      & $1$\\
      $400$  & $1$      & $1$      & $1$\\
      $500$  & $1$      & $1$      & $1$\\
      $600$  & $1$      & $1$      & $1$\\
      $700$  & $1$      & $1$      & $1$\\
      $800$  & $1$      & $1$      & $1$\\
      $900$  & $1$      & $1$      & $1$\\
      $1600$ & $1$      & $1$      & $1$\\ \hline
      \end{tabular}
      \captionof{table}{$m(n)$ for different values of $n$ and reparametrization functions  ($p=1$)}
      \label{t1}
  \end{minipage}
  \hfill
  \begin{minipage}[b]{0.49\textwidth}
    \centering
    \includegraphics[width=\linewidth]{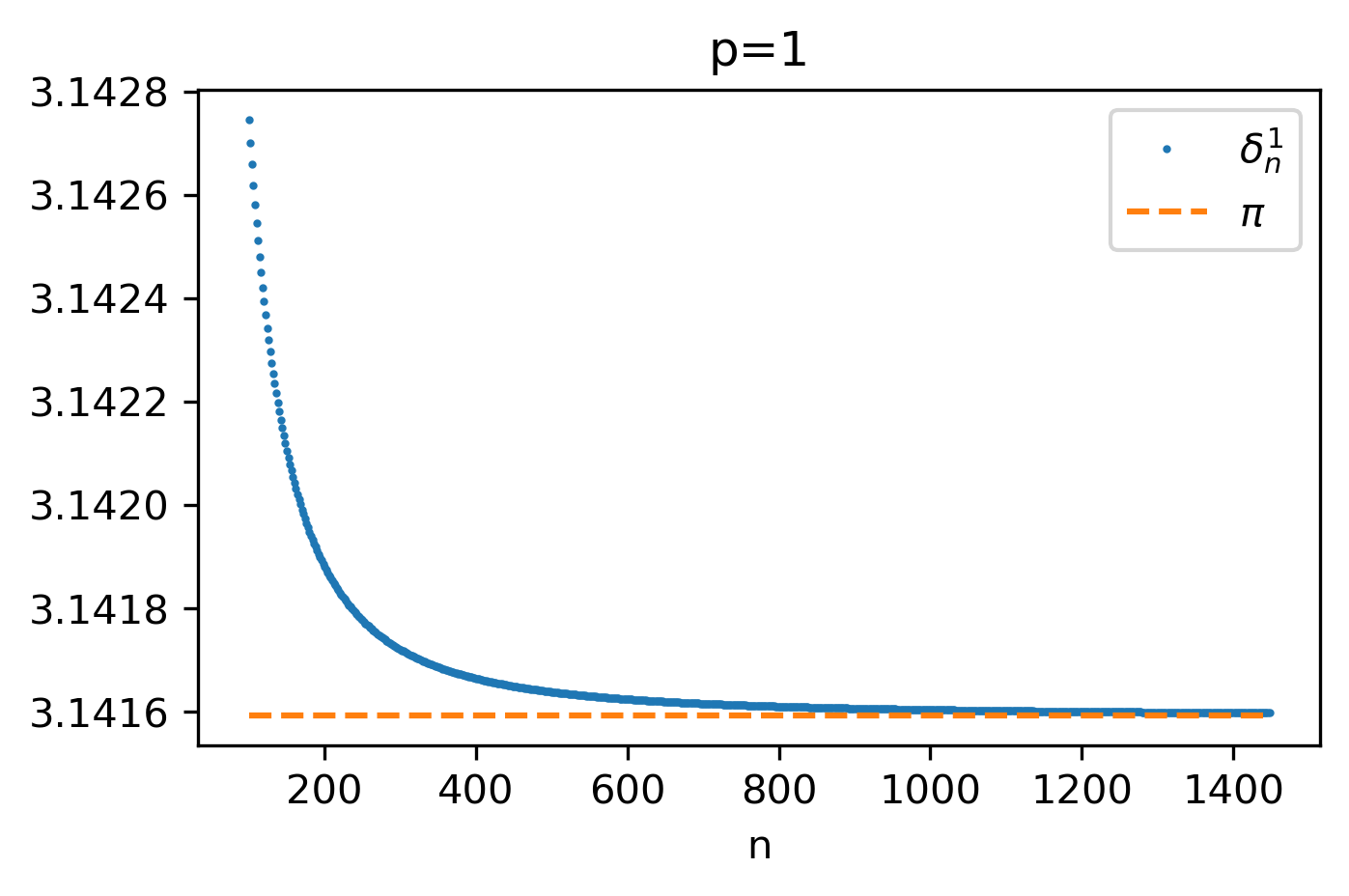}
    \captionof{figure}{The gap in function of $n$ ($p=1$).}
    \label{f2}
    \end{minipage}
  \end{minipage}\vspace*{0.25cm}
\noindent {\bf \em Test 2: study of the eigenvalues  distribution ($\mathbf{p=1}$):} \vspace*{0.25cm}

\noindent \hspace{0.5cm} The analysis of the distribution of our eigenvalues in the case $\phi_1$ is presented in Figure \ref{fgap}, where
$$\delta_n^k=\sqrt{\lambda_{k+1,h}}-\sqrt{\lambda_{k,h}},\;\; \forall n\in\mathbb{N}^{\star},\;\forall k\in\{1,\dots,n-2\}$$
It is apparent that the distance between square root eigenvalues exhibits a pattern of increase, decrease, and subsequent increase. As per Proposition (\ref{propc}), it is established that the symbol starts convex and ends convex. Moreover, upon examining  Figure \ref{f3}, it becomes evident that there exists a segment where the symbol is concave.

A noteworthy observation is that the distance between square root eigenvalues tends to increase when the symbol is convex and decreases when it is concave, resulting in the fact that the presence of segments in which the symbol is concave implies the non-uniform gap issue. However, in the specific case under consideration, the smallest gap is found between the square roots of the initial eigenvalues. This is attributed to the fact that the concave part of the symbol is confined to a small domain.

It is important to note that the results presented in this paper allow us to establish only the decreasing or increasing number of eigenvalues within each interval, as detailed in  Corollary \ref{cl2.5}.
\begin{figure}[H]
\minipage{0.5\textwidth}
  \includegraphics[width=\linewidth]{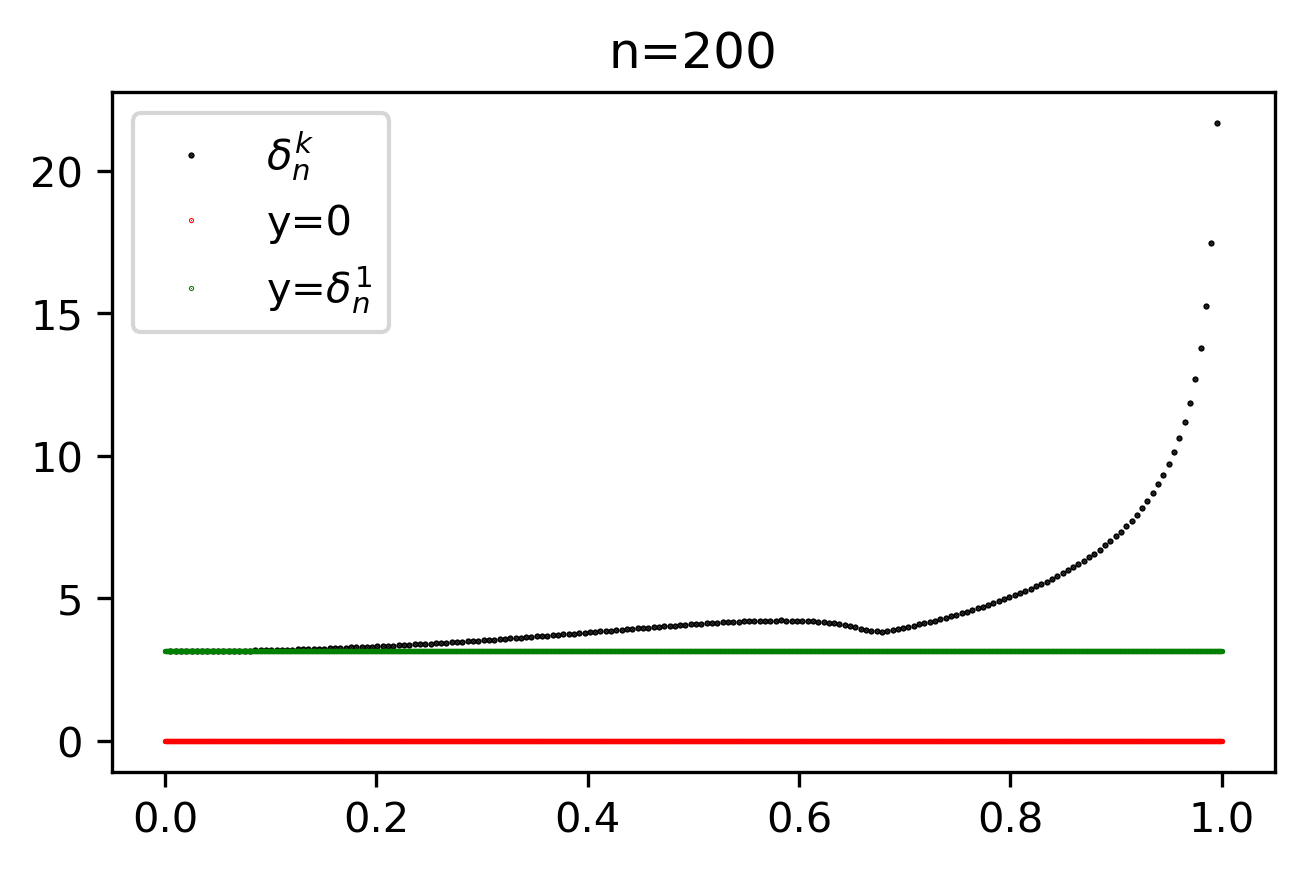}
\endminipage\hfill
\minipage{0.5\textwidth}
  \includegraphics[width=\linewidth]{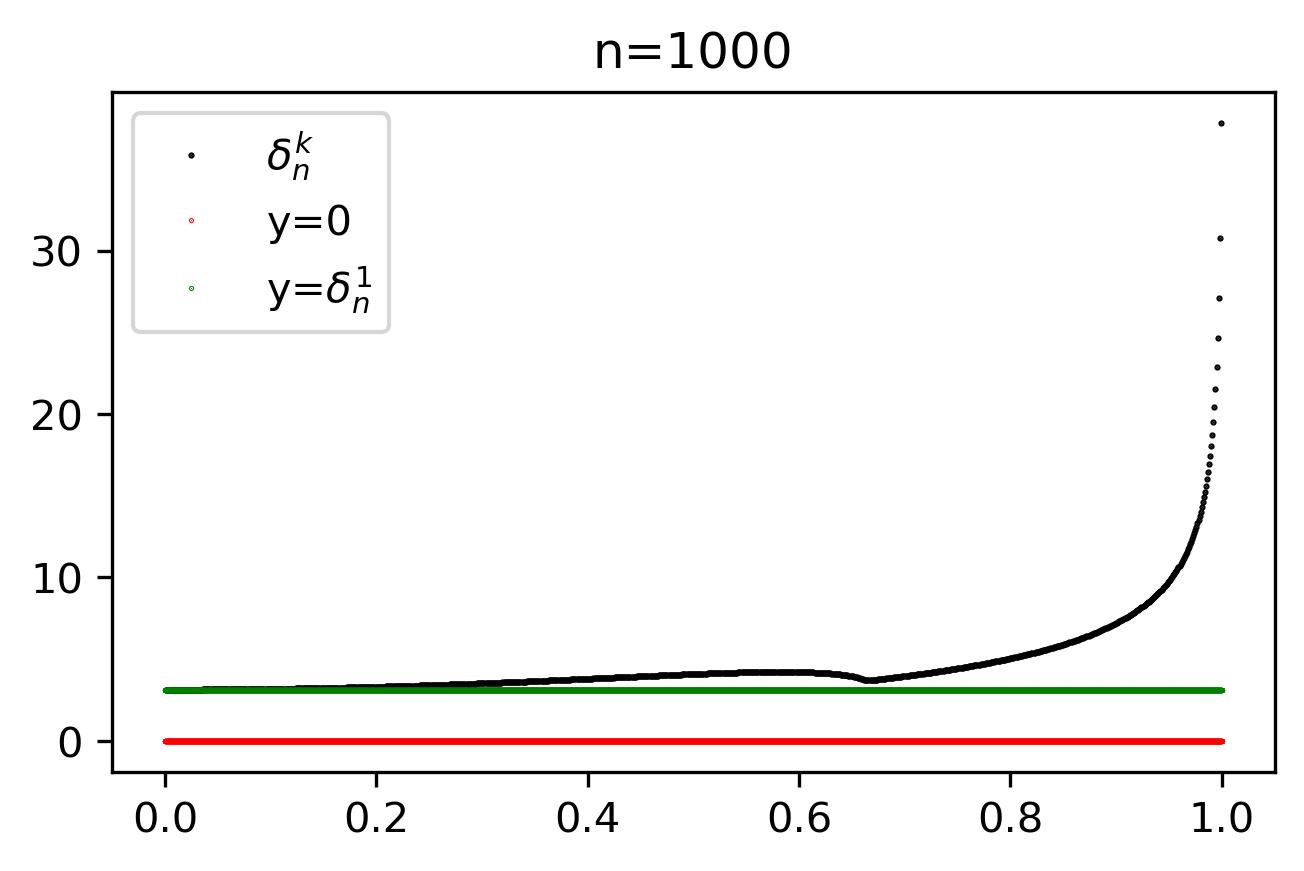}
\endminipage
 \caption{The distribution of the distance between the square root eigenvalues for different values of $n$ and $\phi_1$.}\label{fgap}
\end{figure}\vspace*{0.25cm}
\noindent {\bf \em Test 3: convergence study of the symbol ($\mathbf{p=1}$):} \vspace*{0.25cm}

\noindent  \hspace{0.5cm} In Figures \ref{f3}, we provide a plot of the normalized square root eigenvalues and the approximate symbol for various values of $n$ with a reparametrization function $\phi_1$. The algorithm detailed in \cite{garoni2019symbol}, \cite{garoni2017generalized}, and \cite{bianchi2021analysis}) was employed to derive the approximation of the symbol. Evidently, there is a clear convergence of the discrete eigenvalues towards the symbol.

In summary, in the case of $p=1$, we have observed the optimal gap condition for a large set of reparametrizations $\phi\in\mathbf{C}_{[0,1]}$. In addition, the square root of eigenvalues demonstrates a monotone order, not just in terms of quantity but also in their individual distribution.
\begin{figure}[H]
\minipage{0.5\textwidth}
  \includegraphics[width=\linewidth]{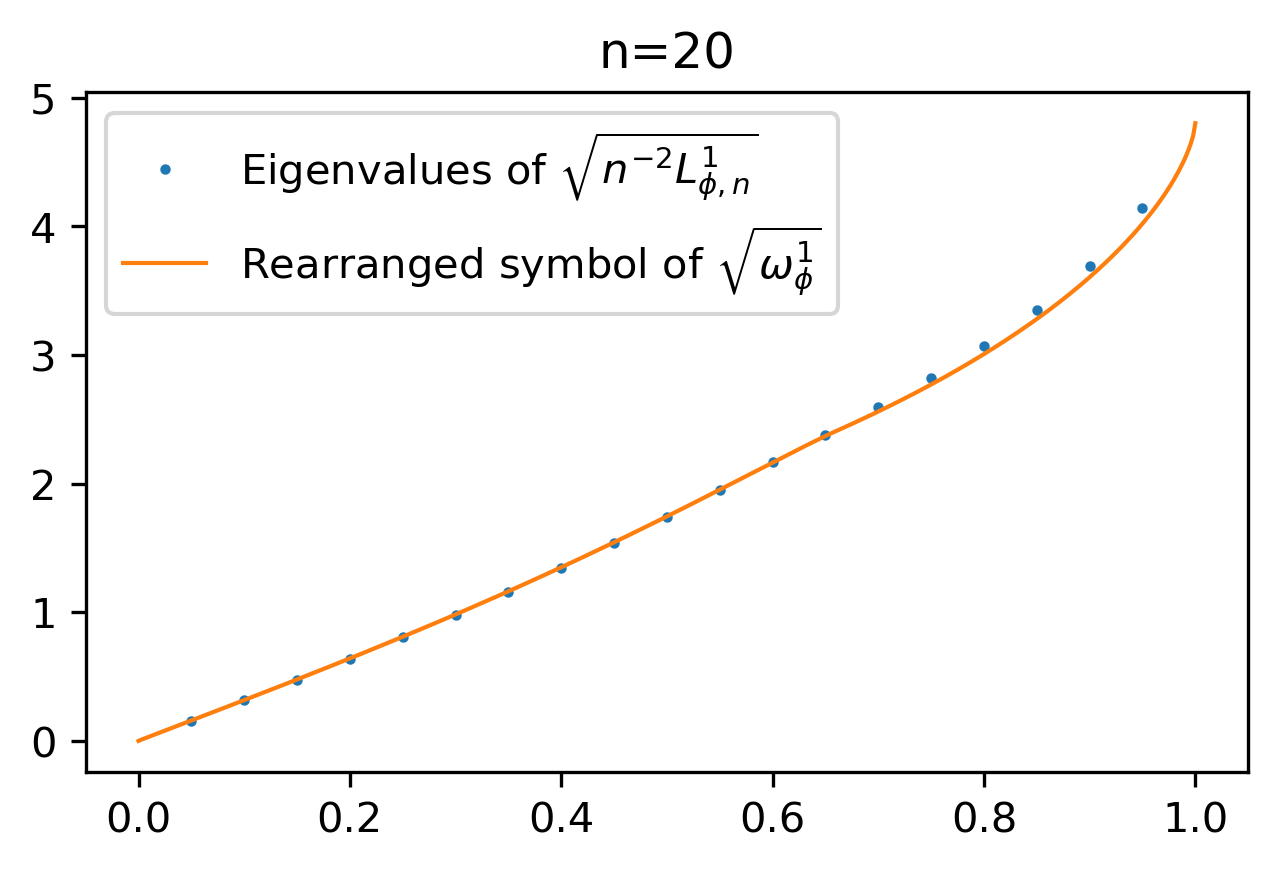}
\endminipage\hfill
\minipage{0.5\textwidth}
  \includegraphics[width=\linewidth]{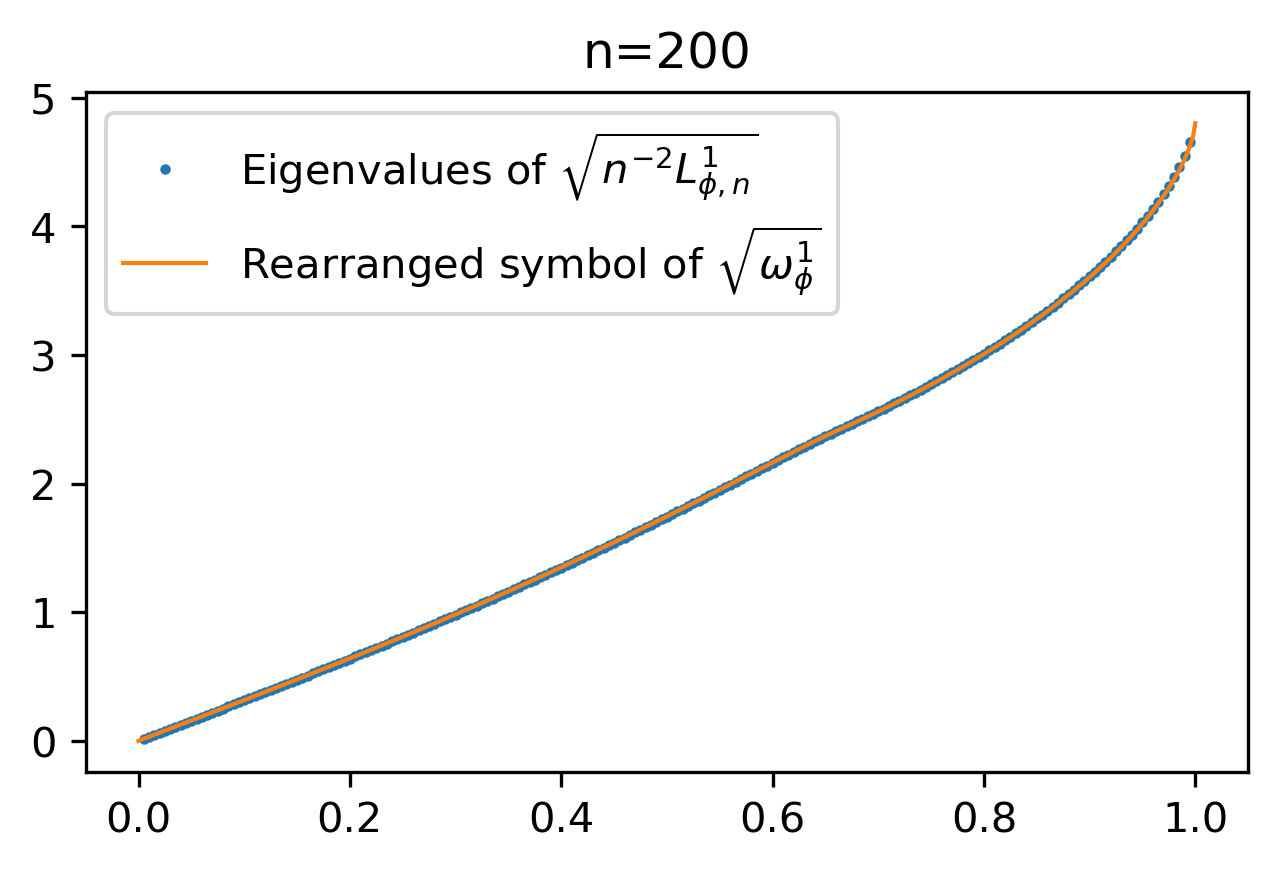}
\endminipage
 \caption{The normalized eigenvalues and the symbol for different values of $n$ and for $\phi_1$.}\label{f3}
\end{figure}
 
\subsection{The Case of Quadratic $B$-splines}
 \hspace{0.5cm} The case $p=2$ is also special, because $OUT(p,n)=0$. Even in this case, we are no longer in the context of classical Lagrange finite elements, yet we observe a uniform gap condition. However, unlike linear $B$-spline, the optimality of the gap is observed only for some specific choice of the reparametrization functions $\phi$. \vspace*{0.25cm}

\noindent  {\bf \em Test 4: study of the gap condition ($\mathbf{p=2}$, and $\phi=\phi_1$):} \vspace*{0.25cm}

\noindent  \hspace{0.5cm}  We begin by employing a reparametrization $\phi = \phi_1$, given by \eqref{eq:reparametrizations-functions-tests}. Following the same approach as in the case of linear $B$-splines, we examine the boundedness of the sequence $(m(n))_{n \geq 1}$ and then the uniform gap condition.  The corresponding plots are presented in Figure \ref{f4-f5}. The figure shows that the sequence $(m(n))_{n \geq 1}$ may not be  bounded, but it  equals $1$ across the majority of indices. However, It is not immediately evident whether a specific value $\gamma$ exists for which $\inf \delta_n^2 \geq \gamma$.  Nonetheless, for numerical testing and practical applications, we can assume that such a $\gamma$ does indeed exist. 
\begin{figure}[H]
\centering
\begin{subfigure}{.5\textwidth}
\centering
\includegraphics[width=\linewidth]{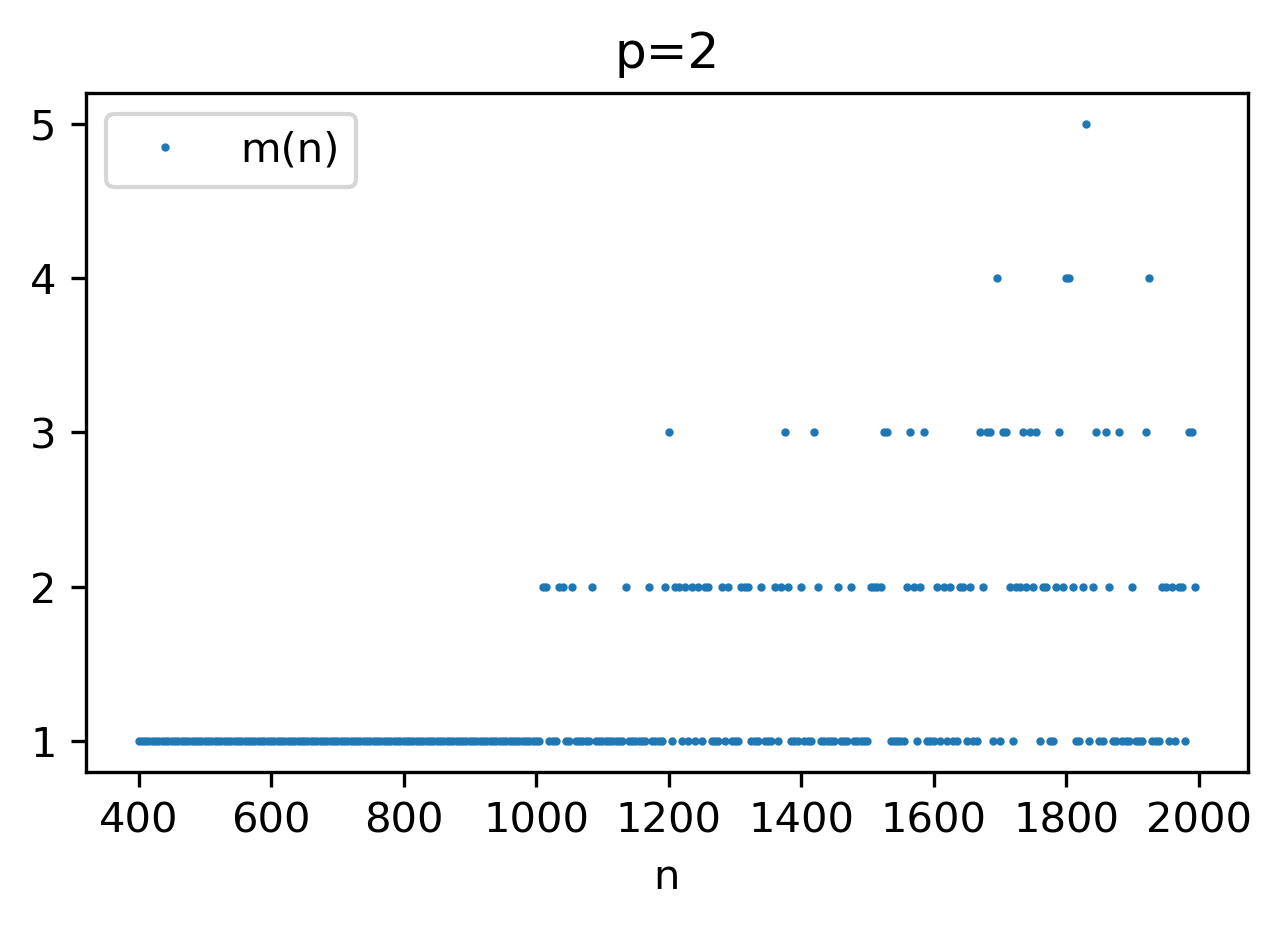}
\caption{The sequence $(m(n))_{n \geq 1}$}
\end{subfigure}%
\begin{subfigure}{.5\textwidth}
\centering
\includegraphics[width=\linewidth]{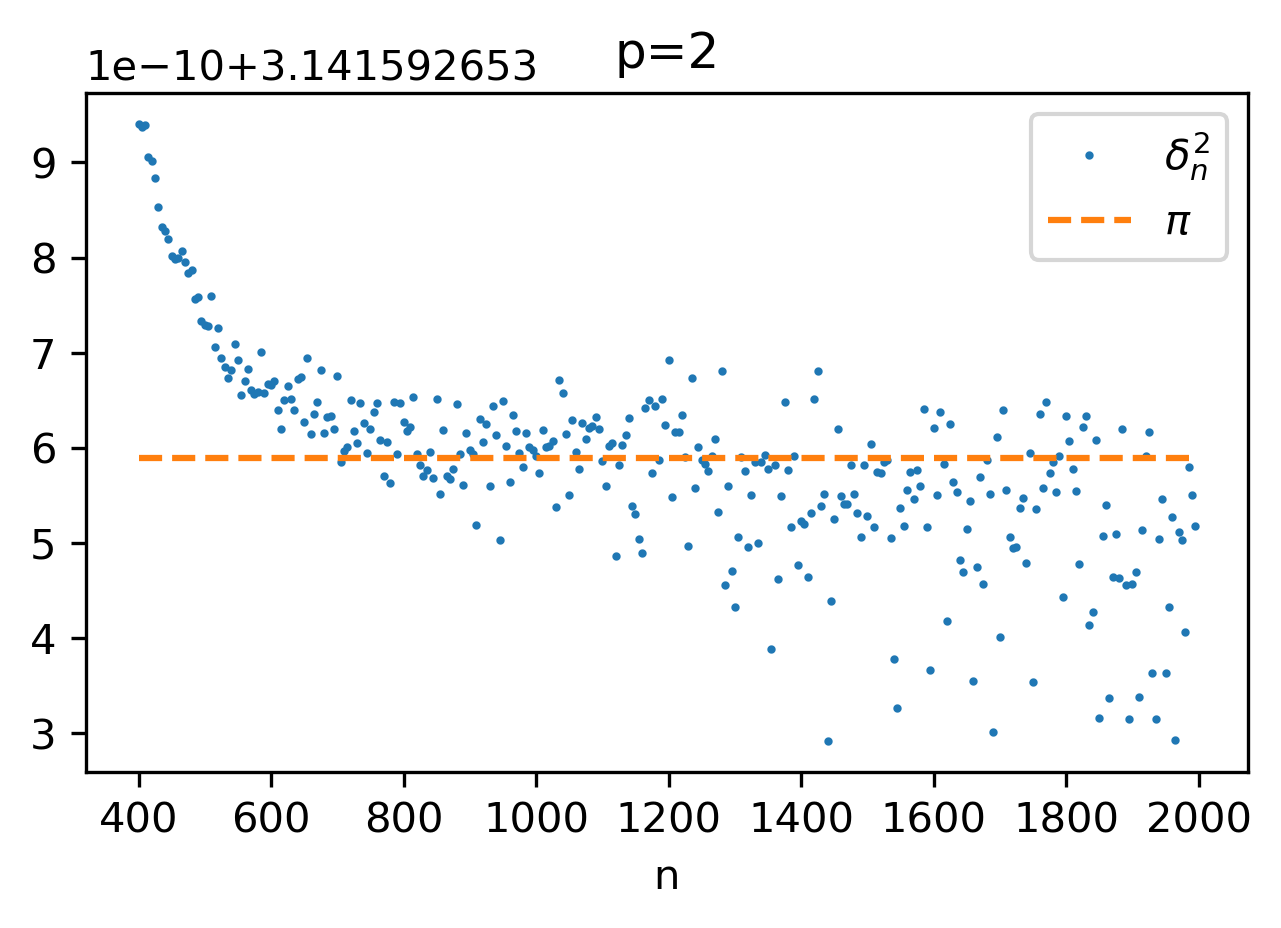}
\caption{The gap $\delta_n^p$}
\end{subfigure}
\caption{The graph of $(m(n))_{n \geq 1}$ (left) and the gap in function of $n$ (right). Parameter values $p=2$, and $\phi=\phi_1$}
\label{f4-f5}
\end{figure}
\noindent  {\bf \em Test 5: study of the gap condition ($\mathbf{p=2}$, and $\phi=\phi_2$):} \vspace*{0.25cm}

\noindent  \hspace{0.5cm} We follow the same rationale as the last test, employing the reparametrization $\phi_2$ as defined in \eqref{eq:reparametrizations-functions-tests}. The results are shown in Figure \ref{f6-f7}. We remark that, contrary to the case $\phi=\phi_1$, the index $m(n)$ follows a linear pattern and that now it is more clear that the gap $\delta_n^2$ is uniformly bounded. It is important to mention here that  the major difference between $\phi_1$ and $\phi_2$ lies in their convexity.
\begin{figure}[H]
\centering
\begin{subfigure}{.5\textwidth}
\centering
\includegraphics[width=\linewidth]{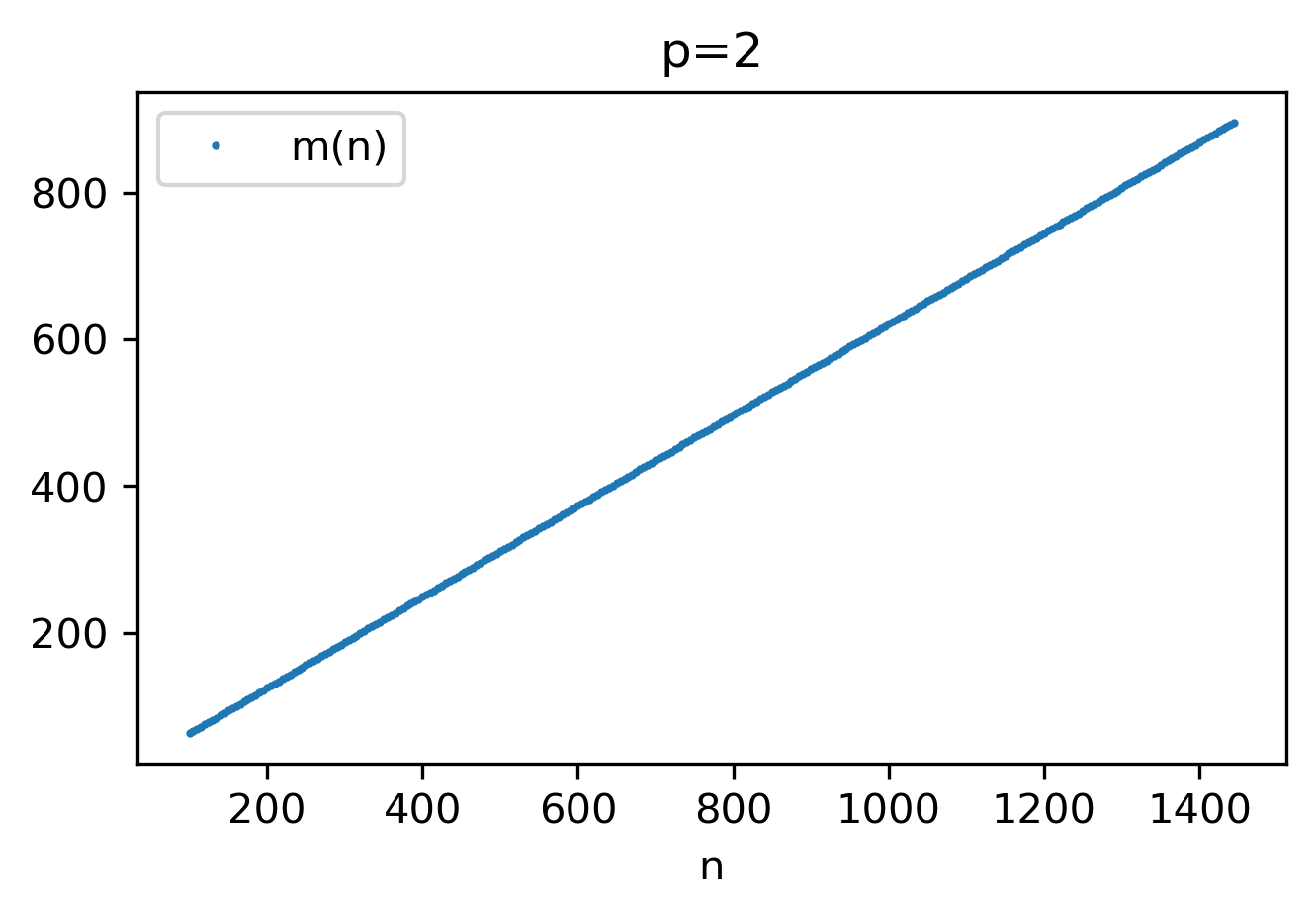}
\caption{The sequence $(m(n))_{n \geq 1}$}
\end{subfigure}%
\begin{subfigure}{.5\textwidth}
\centering
\includegraphics[width=\linewidth]{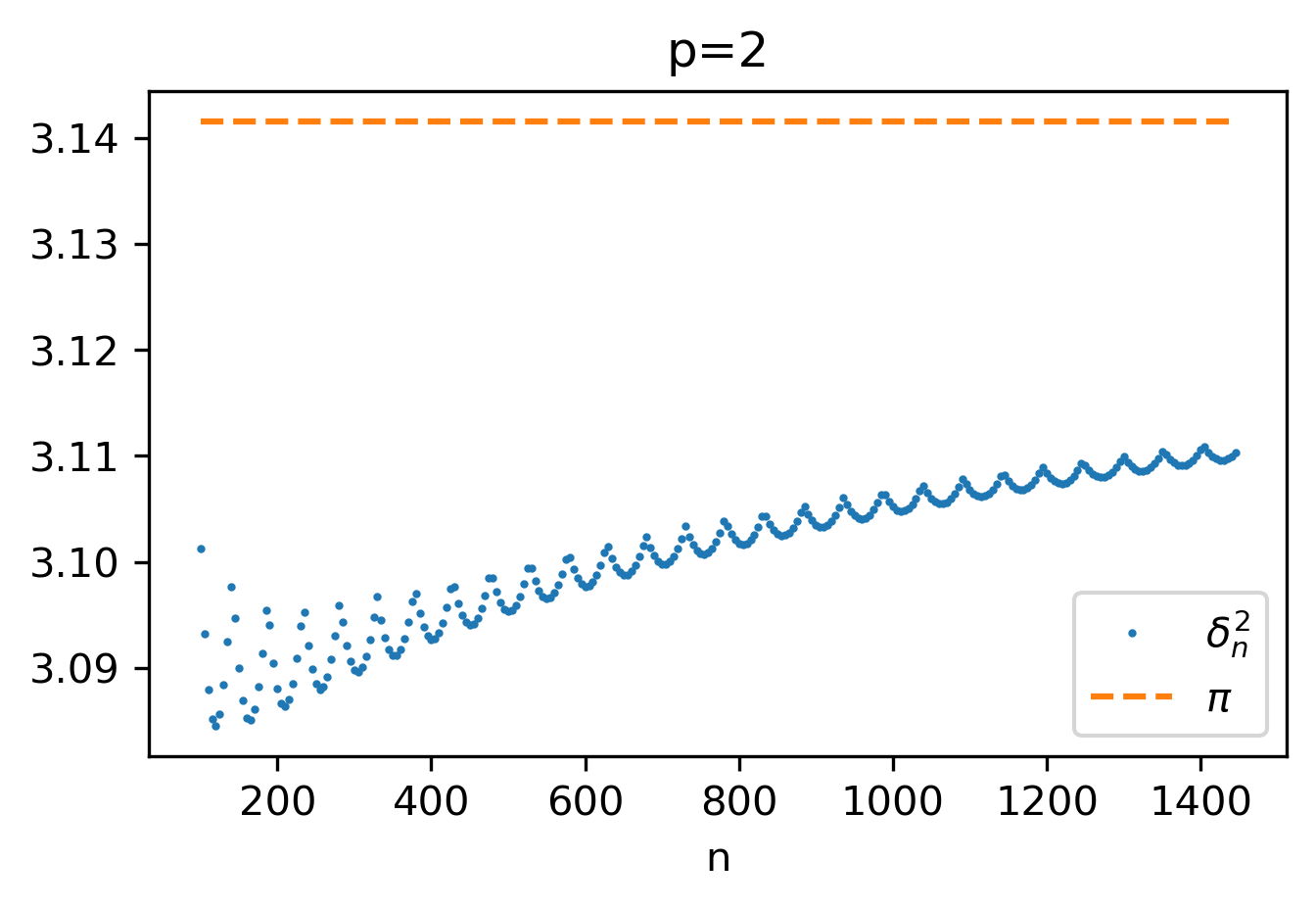}
\caption{The gap $\delta_n^p$}
\end{subfigure}
\caption{The graph of $(m(n))_{n \geq 1}$ (left) and the gap in function of $n$ (right). Parameter values $p=2$, and $\phi=\phi_2$}
\label{f6-f7}
\end{figure}
\noindent  {\bf \em Test 6: study of the optimal gap condition ($\mathbf{p=2}$, and $\phi=\phi_3$):} \vspace*{0.25cm}

\noindent  \hspace{0.5cm}  In the last test, we use the strictly concave function $\phi_3$, as illustrated in the top plots of Figure \ref{f8-f11}. Remarkably, in this case, we observe the optimal gap condition. In addition, we remark that the behavior of the sequence $(m(n))_{n \geq 1}$ and the gap is similar to that of the case when $p=1$. However, if we modify $\theta$, as shown in the bottom plots of Figure \ref{f8-f11}, both $(m(n))_{n \geq 1}$ and the gap exhibit behavior similar to the case when $\phi=\phi_1$.\\

In summary, based on multiple numerical tests, we observed that concave reparametrizations prove to be more effective in restoring the optimal gap condition when $p=2$. On the contrary, employing convex reparametrizations leads to losing the optimal gap, even though a uniform gap is observed. Additionally, the sequence $(m(n))_{n \geq 1}$ is unbounded when convex reparametrizations are employed.
\begin{figure}[H]
\centering
\begin{subfigure}{.5\textwidth}
\centering
\includegraphics[width=\linewidth]{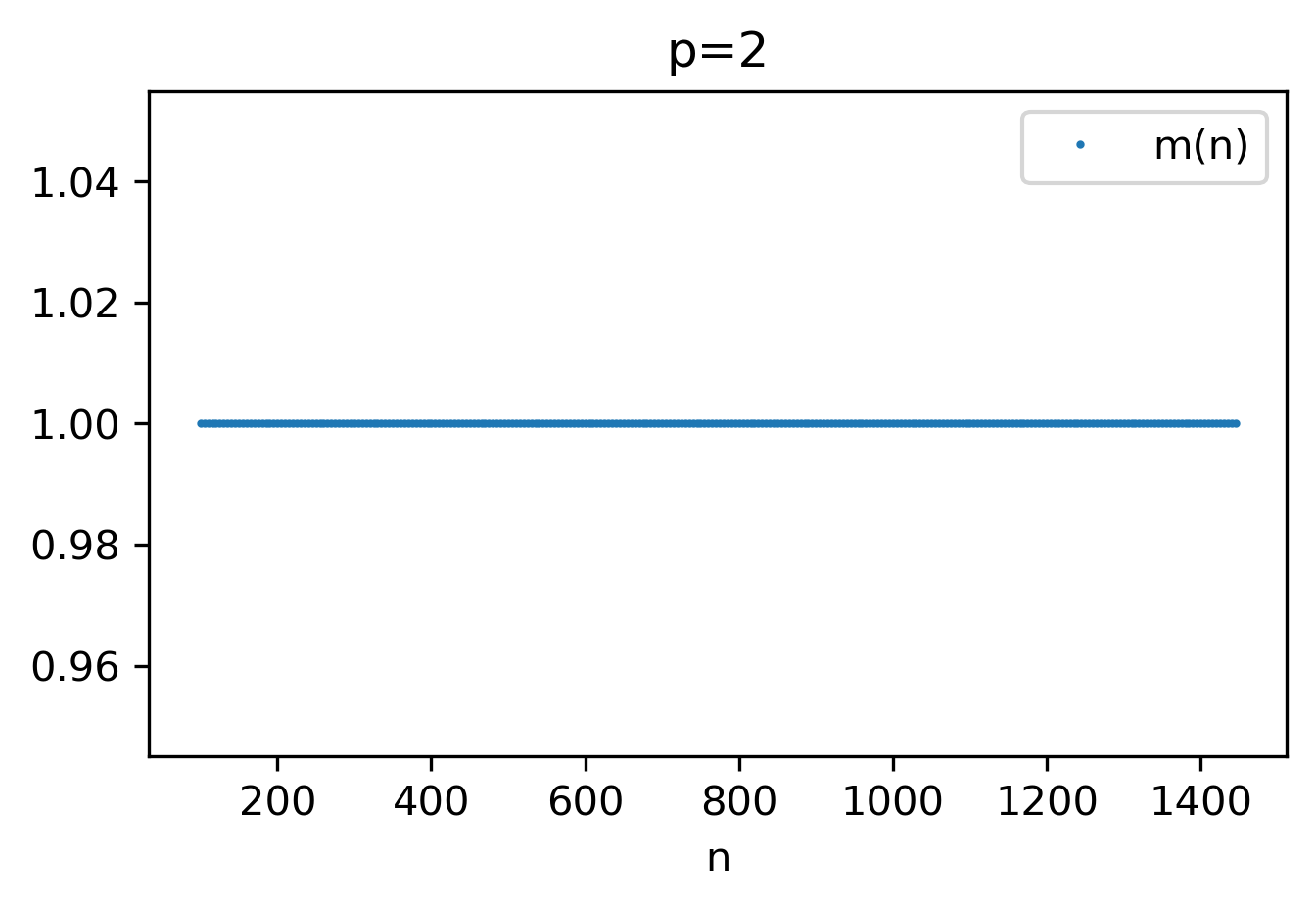}
\caption{The sequence $(m(n))_{n \geq 1}$ ($\theta=0.01$)}
\end{subfigure}%
\begin{subfigure}{.5\textwidth}
\centering
\includegraphics[width=\linewidth]{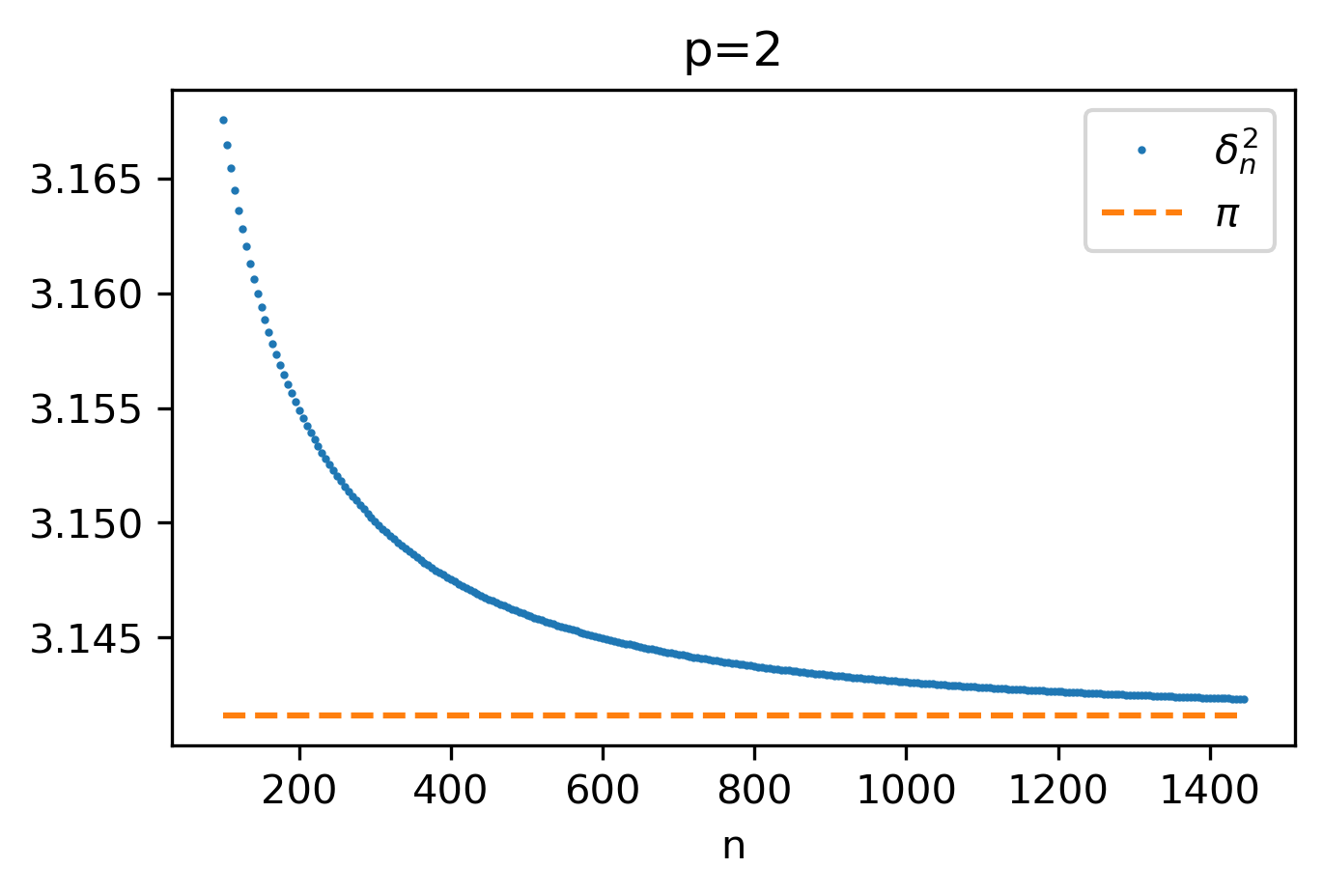}
\caption{The gap $\delta_n^p$ ($\theta=0.01$)}
\end{subfigure}
\begin{subfigure}{.5\textwidth}
\centering
\includegraphics[width=\linewidth]{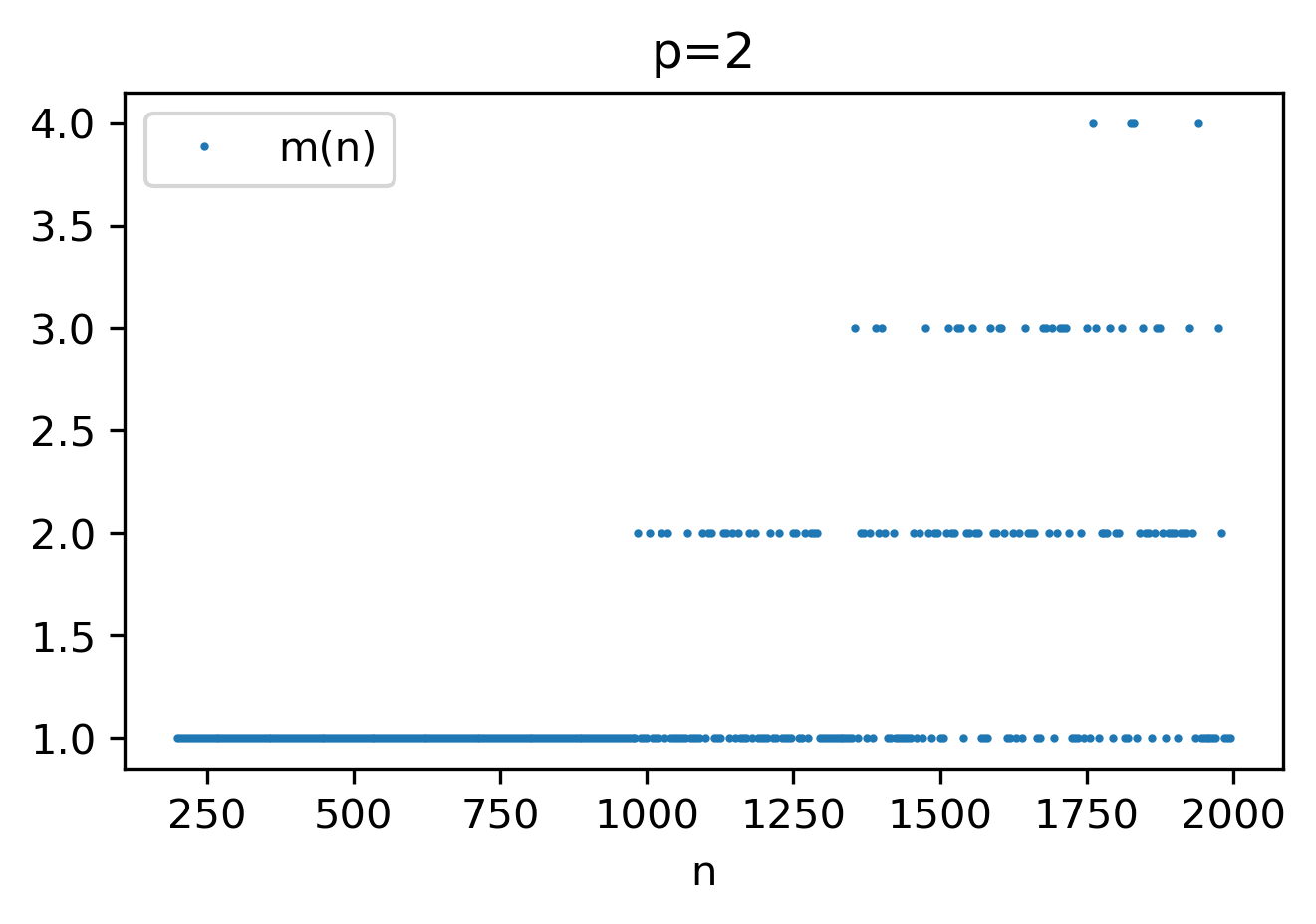}
\caption{The sequence $(m(n))_{n \geq 1}$ ($\theta=1$)}
\end{subfigure}%
\begin{subfigure}{.5\textwidth}
\centering
\includegraphics[width=\linewidth]{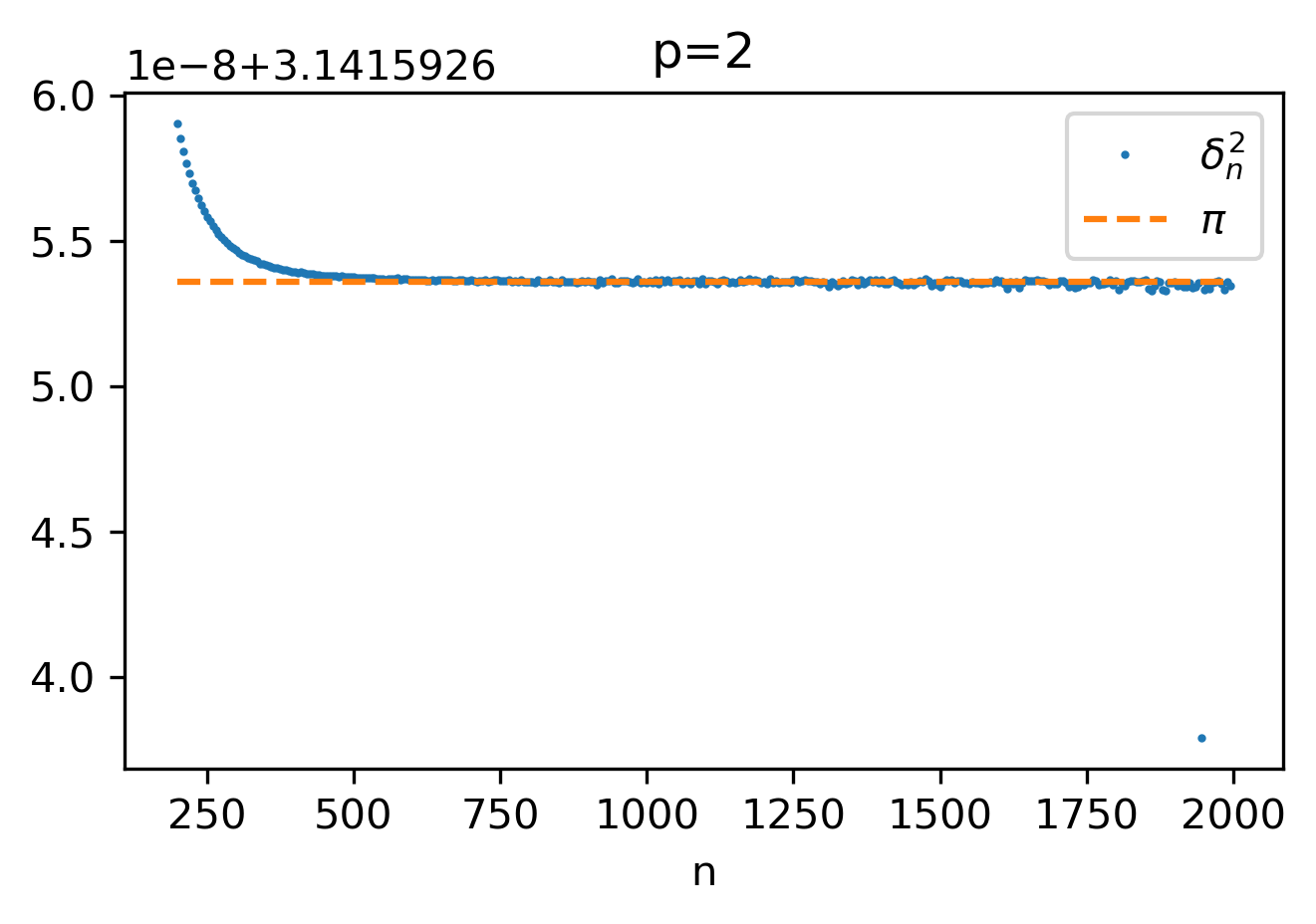}
\caption{The gap $\delta_n^p$ ($\theta=1$)}
\end{subfigure}
\caption{The graph of $(m(n))_{n \geq 1}$ (left) and the gap in function of $n$ (right). Parameter values $p=2$, and $\phi=\phi_3$}
\label{f8-f11}
\end{figure}

\subsection{The Case when $p \geq 3$}

 \hspace{0.5cm} Now we examine the gap behavior when $p \geq 3$. We aim to determine whether it exhibits similar characteristics to the case when $p=2$. \vspace*{0.25cm} 

\noindent  {\bf \em Test 7: study of the gap condition ($\mathbf{p \geq 3}$, and different values of $\phi$):} \vspace*{0.25cm} 

\noindent  \hspace{0.5cm}  Our analysis begins with the examination of the reparametrizations $\phi_1$ and $\phi_2$, and the corresponding results are given in  Figures \ref{f12-f13-}--\ref{f14-f15-}. These choices of reparametrizations yield a similar behavior of $(m(n))$ to that of $p=2$ with $\phi_2$ (Fig. \ref{f6-f7}). Namely, the sequence $(m(n))_{n \geq 1}$ is unbounded. The major difference is that for $p=3$ the uniform gap condition is lost.

However, a distinct behavior in terms of gap and index $m(n)$ compared to the case $p=2$ is observed when we use the concave reparametrization $\phi_3$, as depicted in Figure  \ref{f16-f17-}. In this case, it appears that the sequence $(m(n))_{n\geq 1}$ does not seem to be bounded, and notably, it is not equal to $1$ across the majority of indices. In addition, even though the use of $\phi_3$ restores the uniform gap condition, it is not the optimal one, as was the case for $p=2$.\\

After analyzing the behavior of the gap $\delta^p_n$ and the sequence $(m(n))_{n \geq 1}$ for values of $p$ in $\{1,2,3\}$, we now turn our attention to the case where $p \geq 4$. In this case, we developed several tests (not detailed here) and they showed that the behaviors of $\delta^p_n$ and $m(n)$ are similar to the case when $p=3$. In particular, when we reparametrize using either $\phi_1$ or $\phi_2$, we lose the property of a uniform gap and the condition on $(m(n))_{n \geq 1}$. However, when we use $\phi_3$ with $\theta=0.01$, an interesting observation emerges. We observe that when the values of $p$ increase, the point at which the gap becomes less than $\pi$ approaches zero. These observations raise the question of whether there exists a reparametrization that can yield the optimal gap condition. It is crucial to numerically verify if such a reparametrization can be found and determine if the required hypothesis on $(m(n))_{n \geq 1}$ holds. \vspace*{0.25cm}
\begin{figure}[H]
\centering
\begin{subfigure}{.5\textwidth}
\centering
\includegraphics[width=\linewidth]{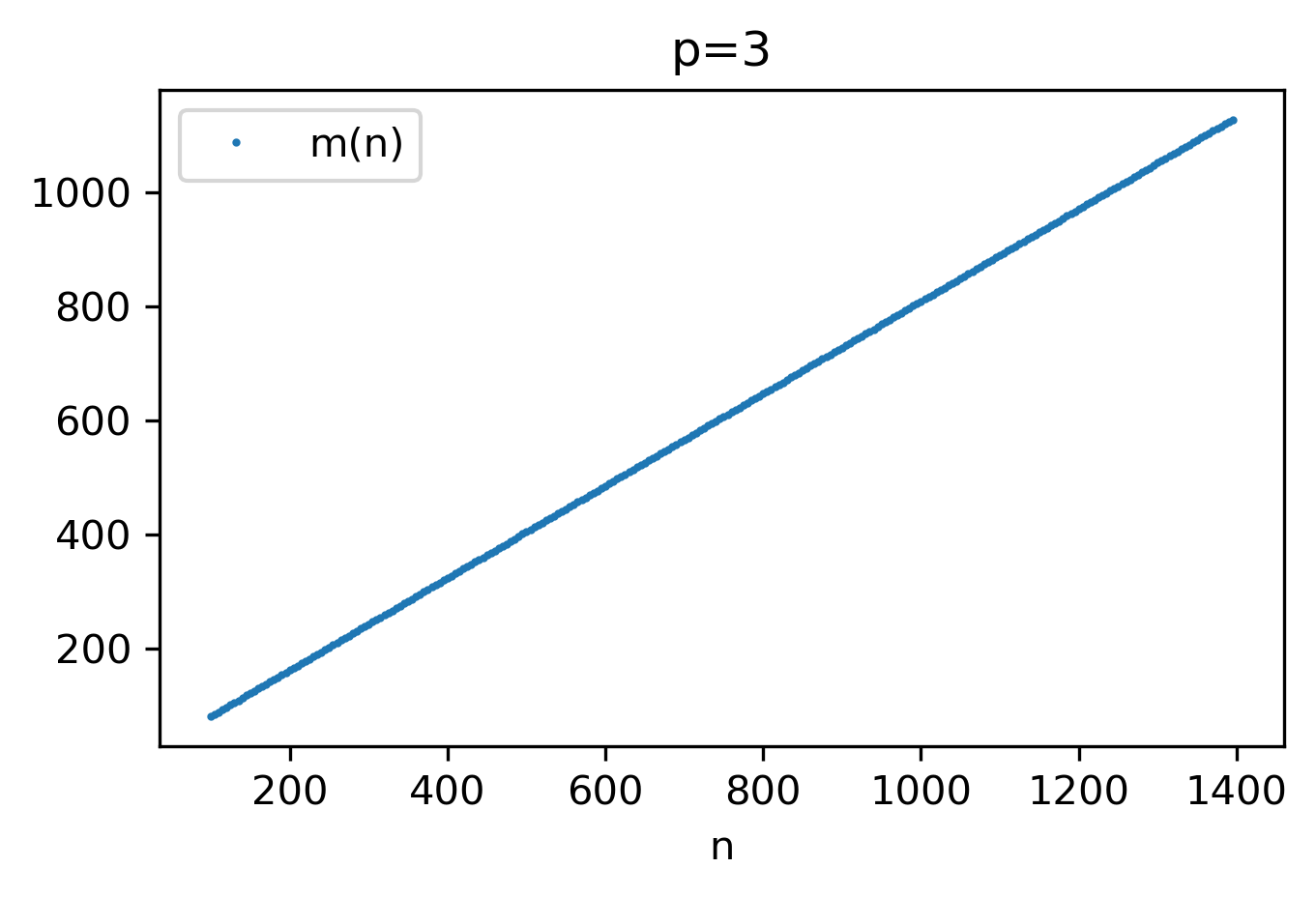}
\caption{The sequence $(m(n))_{n \geq 1}$.}
\end{subfigure}%
\begin{subfigure}{.5\textwidth}
\centering
\includegraphics[width=\linewidth]{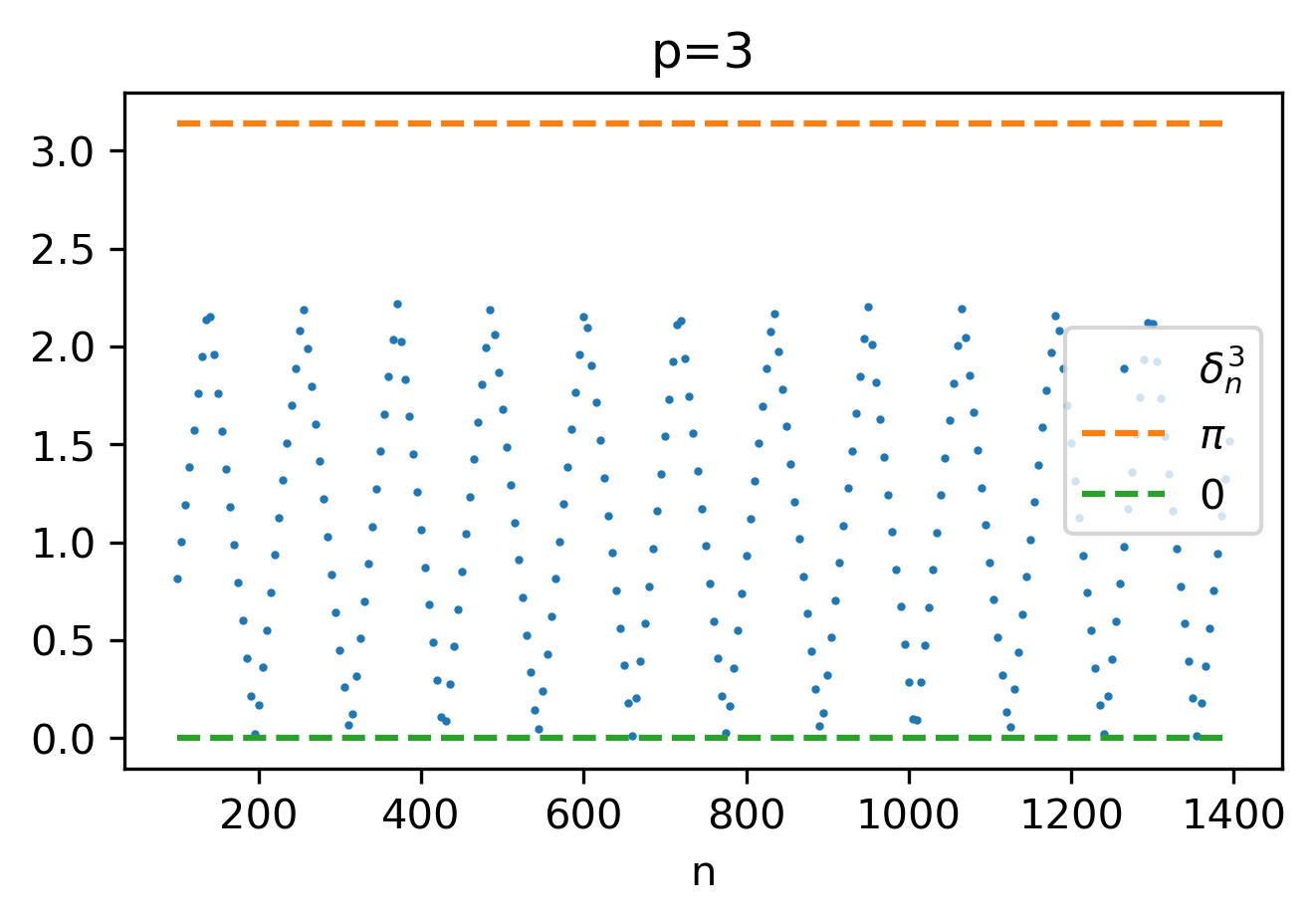}
\caption{The gap $\delta_n^p$}
\end{subfigure}
\caption{The graph of $(m(n))_{n \geq 1}$ (left) and the gap in function of $n$ (right). Parameter values $p=3$, and $\phi=\phi_1$}
\label{f12-f13-}
\end{figure}

\begin{figure}[H]
\centering
\begin{subfigure}{.5\textwidth}
\centering
\includegraphics[width=\linewidth]{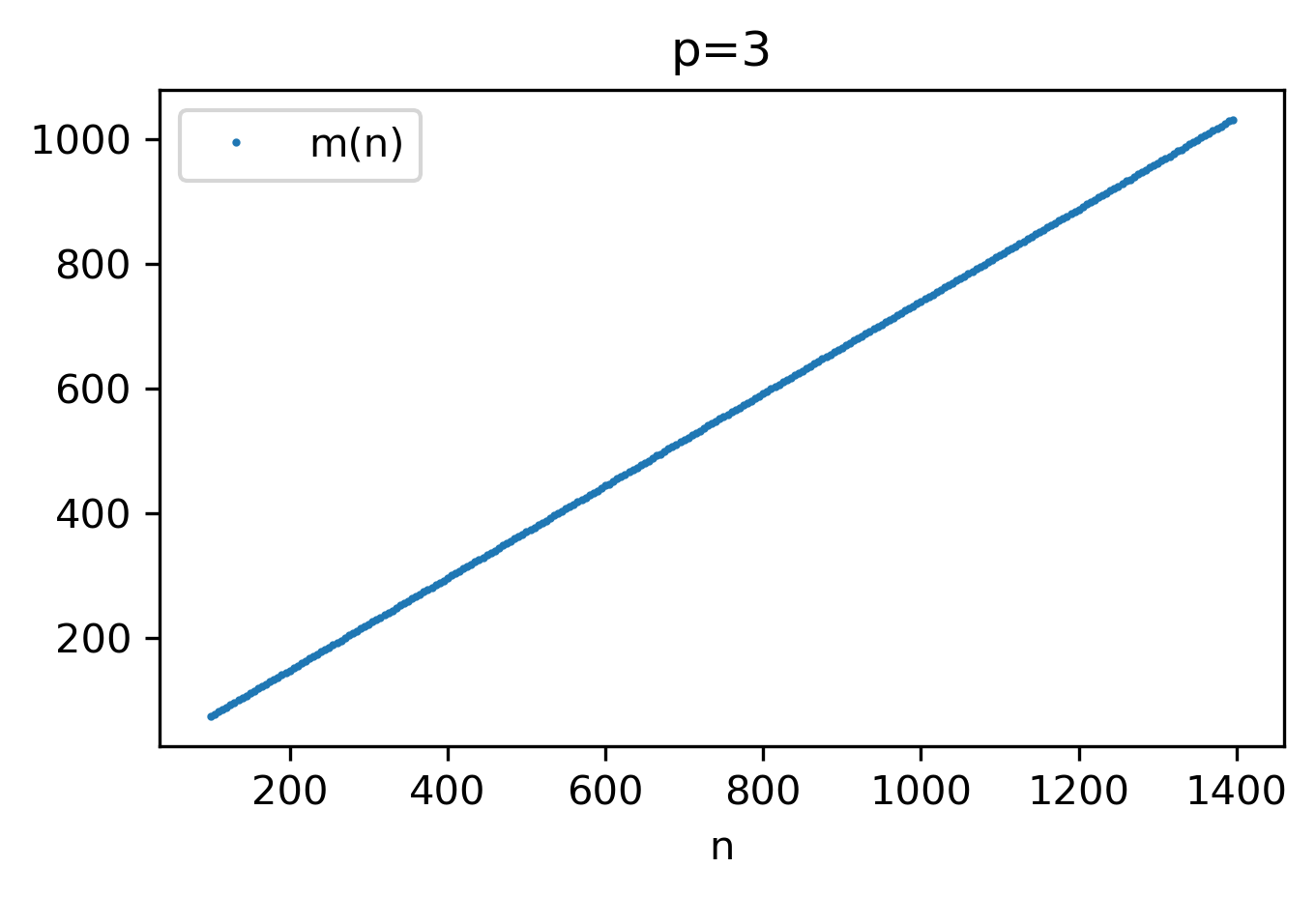}
\caption{The sequence $(m(n))_{n \geq 1}$}
\end{subfigure}%
\begin{subfigure}{.5\textwidth}
\centering
\includegraphics[width=\linewidth]{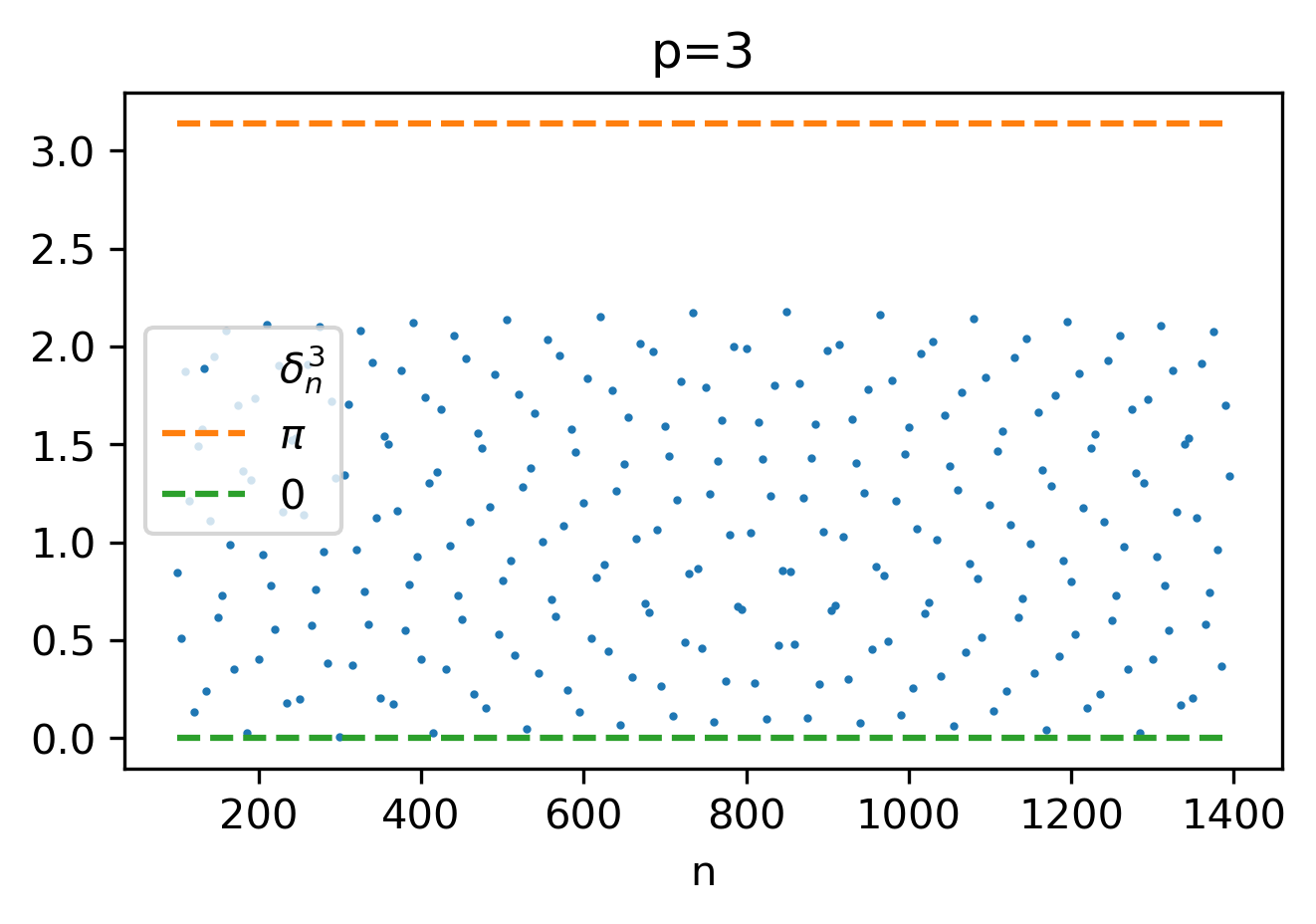}
\caption{The gap $\delta_n^p$}
\end{subfigure}
\caption{The graph of $(m(n))_{n \geq 1}$ (left) and the gap in function of $n$ (right). Parameter values $p=3$, and $\phi=\phi_2$}
\label{f14-f15-}
\end{figure}

\begin{figure}[H]
\centering
\begin{subfigure}{.5\textwidth}
\centering
\includegraphics[width=\linewidth]{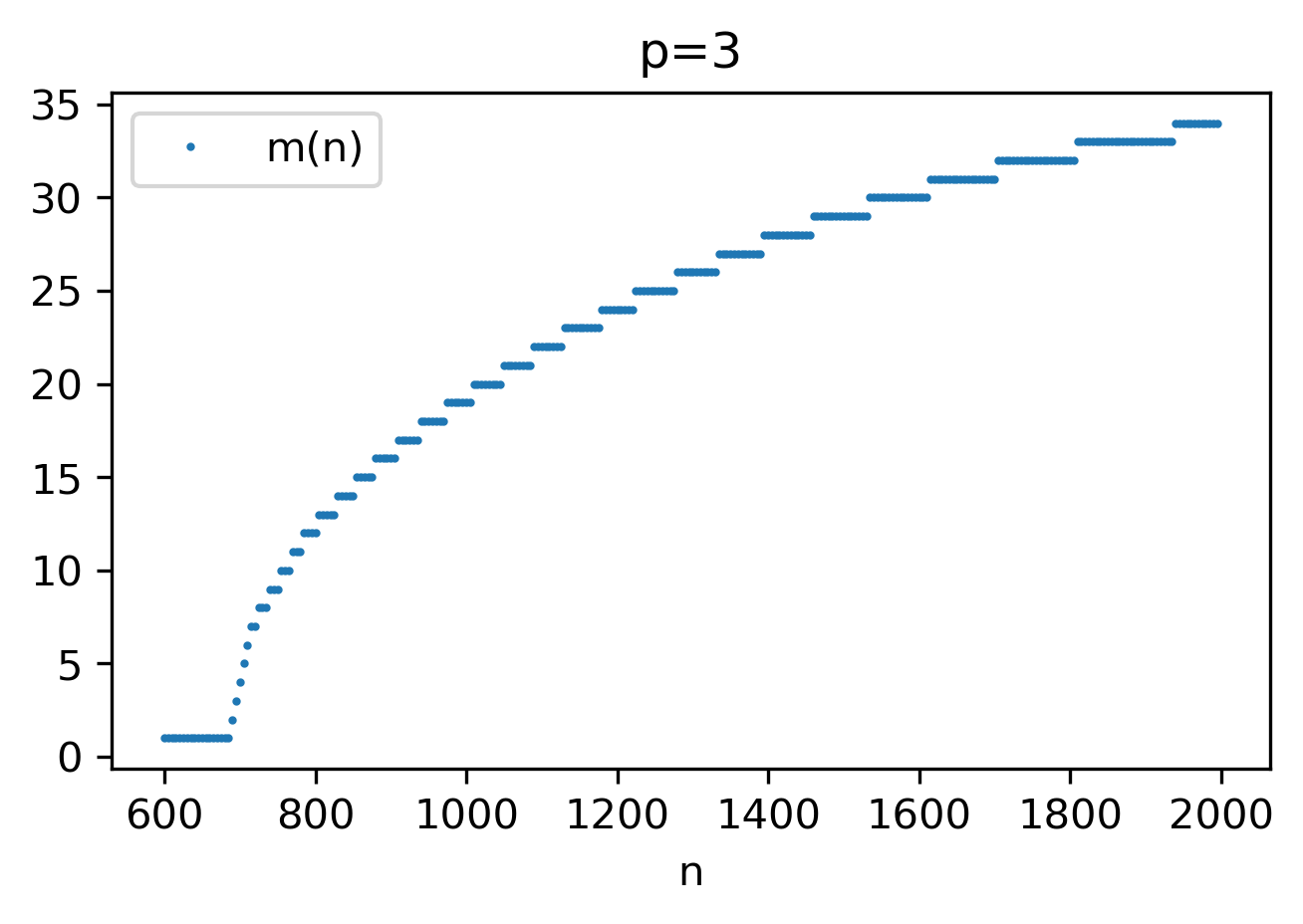}
\caption{The sequence $(m(n))_{n \geq 1}$}
\end{subfigure}%
\begin{subfigure}{.5\textwidth}
\centering
\includegraphics[width=\linewidth]{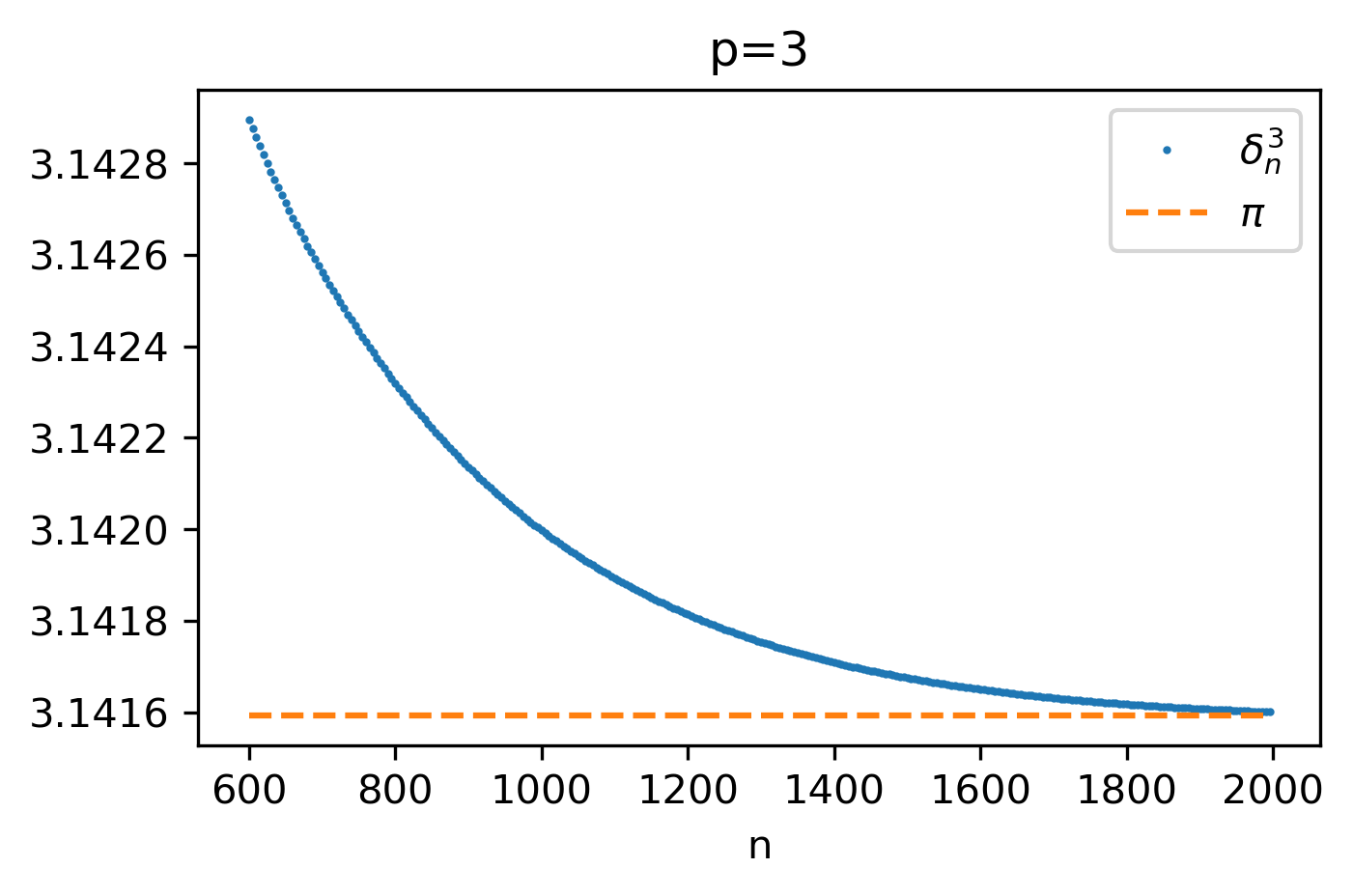}
\caption{The gap $\delta_n^p$}
\end{subfigure}
\begin{subfigure}{.5\textwidth}
\centering
\includegraphics[width=\linewidth]{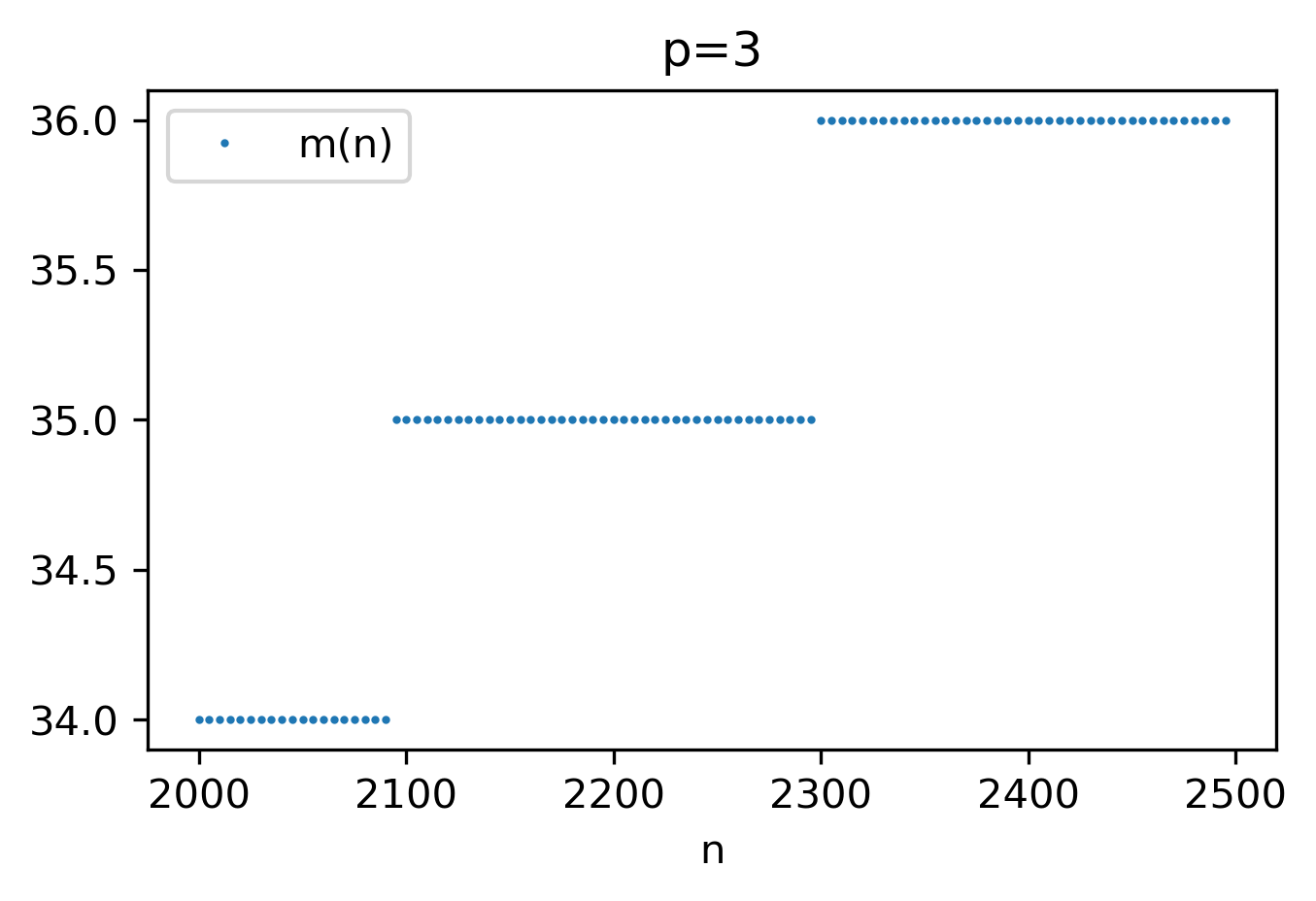}
\caption{The sequence $(m(n))_{n \geq 1}$ }
\end{subfigure}%
\begin{subfigure}{.5\textwidth}
\centering
\includegraphics[width=\linewidth]{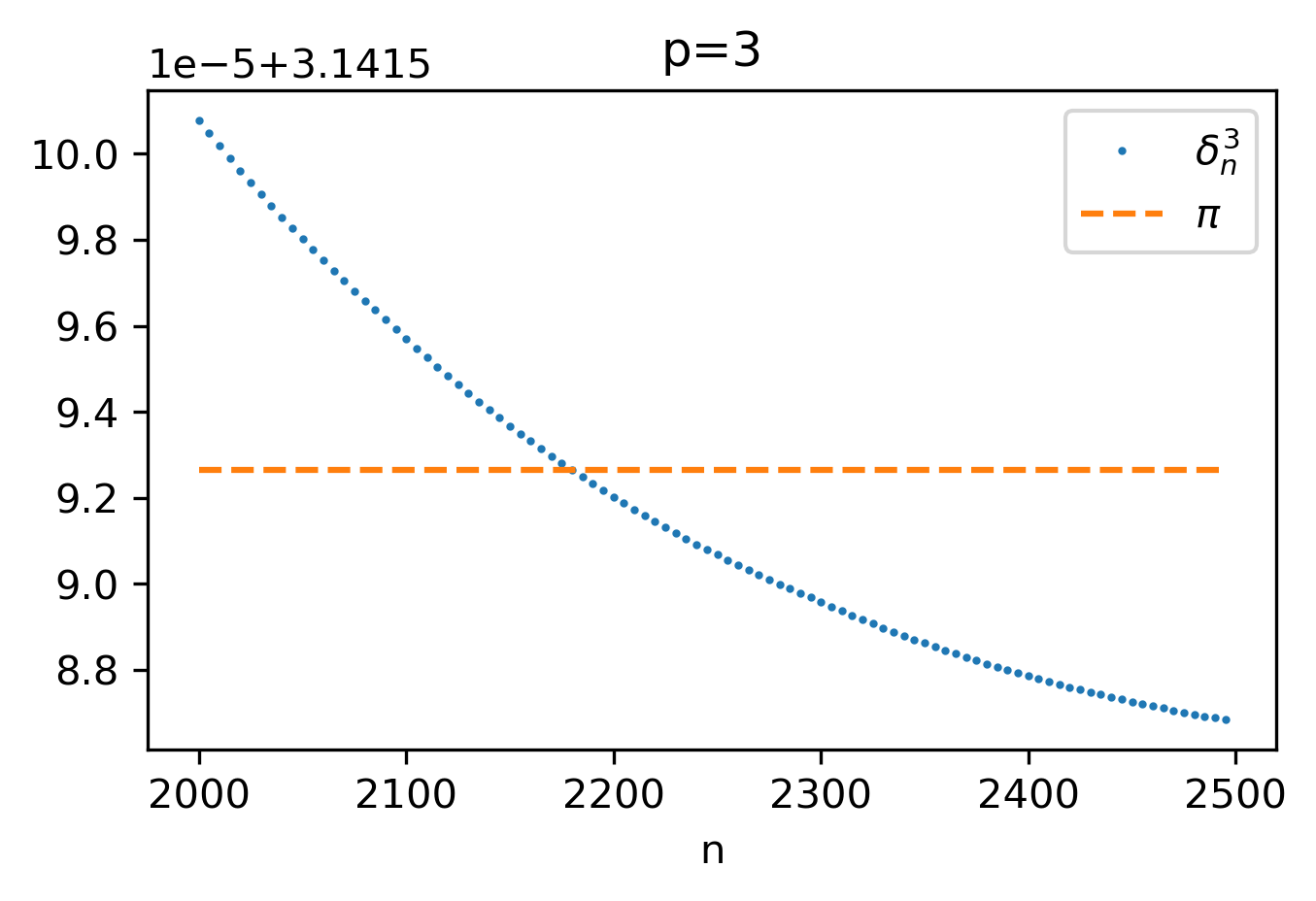}
\caption{The gap $\delta_n^p$}
\end{subfigure}
\caption{The graph of $(m(n))_{n \geq 1}$ (left) and the gap in function of $n$ (right). Parameter values: $p=3$, and $\phi=\phi_3$ with $\theta=0.01$}
\label{f16-f17-}
\end{figure}
\noindent  {\bf \em Test 8: study of the optimal gap condition ($\mathbf{p \geq 3}$):}  

\noindent  \hspace{0.5cm} In this test, we present numerical evidence showing that a simple modification to $\phi_3$ allows us to achieve a bounded sequence$(m(n))_{n \geq 1}$ and satisfy the optimal gap condition for all values of $p \in \{3, 4, \dots, 27\}$. Let
$$
\Phi_p(x)=(\phi_3(x))^{\frac{1}{p}},\quad \forall x\in [0,1].
$$

In Figures \ref{f15-f16}--\ref{f19-f20}, we can observe that for $p\in\{3,4,5\}$ the gap behavior is identical to that of the case $p=1$ with $\phi_3$ and $\theta=0.01$. This means that we achieve the optimal gap and observe the same pattern in the index $m(n)$ as in the case $p=1$. It is worth noting that this behavior is in accordance with Corollary \ref{cl2.5} since the number of square root eigenvalues follows the numerical convexity of our symbol. However, our numerical tests indicate that the distance between successive square root eigenvalues is increasing suggesting that the index at which the gap occurs is $m_0=1$.

Furthermore, for $p\in\{6,\dots ,27\}$ (not detailed here), our tests reveal the same gap behavior and sequence pattern $(m(n))_{n \geq 1}$ as observed in the case $p=1$.

Additionally, when $p=2$ and utilizing the reparametrization $\Phi_2$, we observe a behavior consistent with cases $p\in\{1,3,4,\dots,27\}$, as illustrated in Figure \ref{f21-f22}.
\begin{figure}[H]
\centering
\begin{subfigure}{.5\textwidth}
\centering
\includegraphics[width=\linewidth]{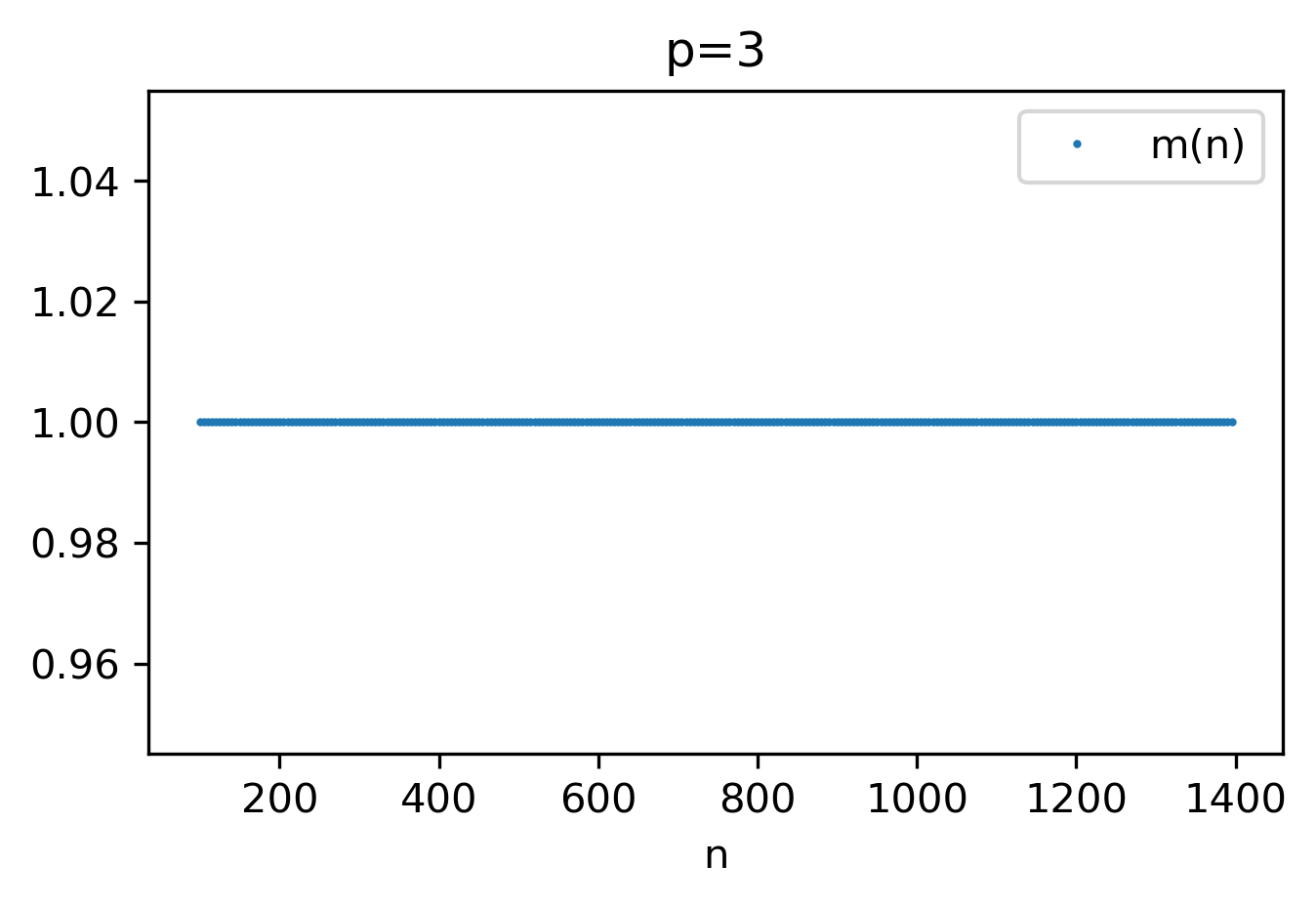}
\caption{The sequence $(m(n))_{n \geq 1}$}
\end{subfigure}%
\begin{subfigure}{.5\textwidth}
\centering
\includegraphics[width=\linewidth]{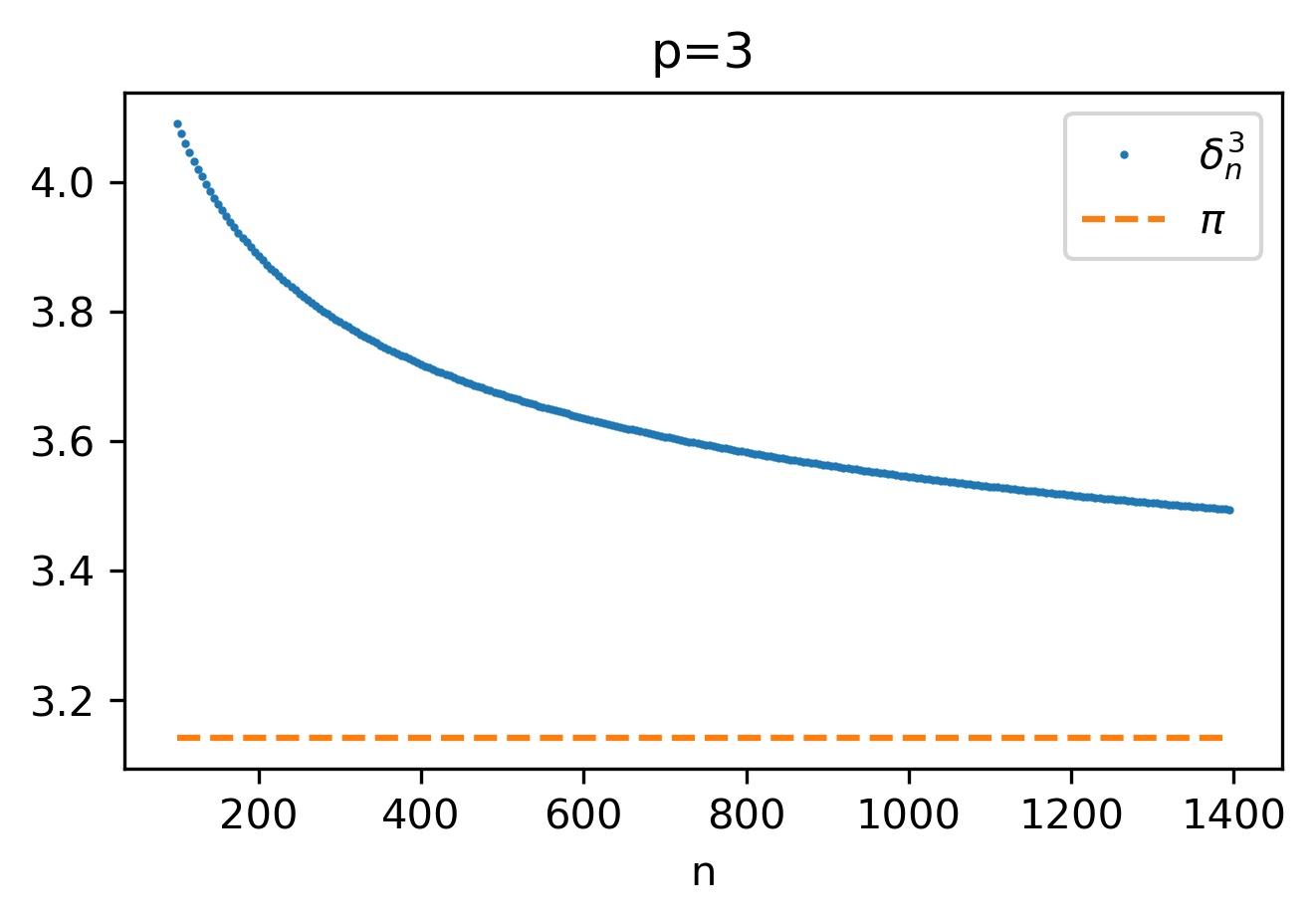}
\caption{The gap $\delta_n^p$}
\end{subfigure}
\caption{The graph of $(m(n))_{n \geq 1}$ (left) and the gap in function of $n$ (right). Parameter values $p=3$, and $\phi=\Phi_3$ with $\theta = 0.01$}
\label{f15-f16}
\end{figure}

\begin{figure}[H]
\centering
\begin{subfigure}{.5\textwidth}
\centering
\includegraphics[width=\linewidth]{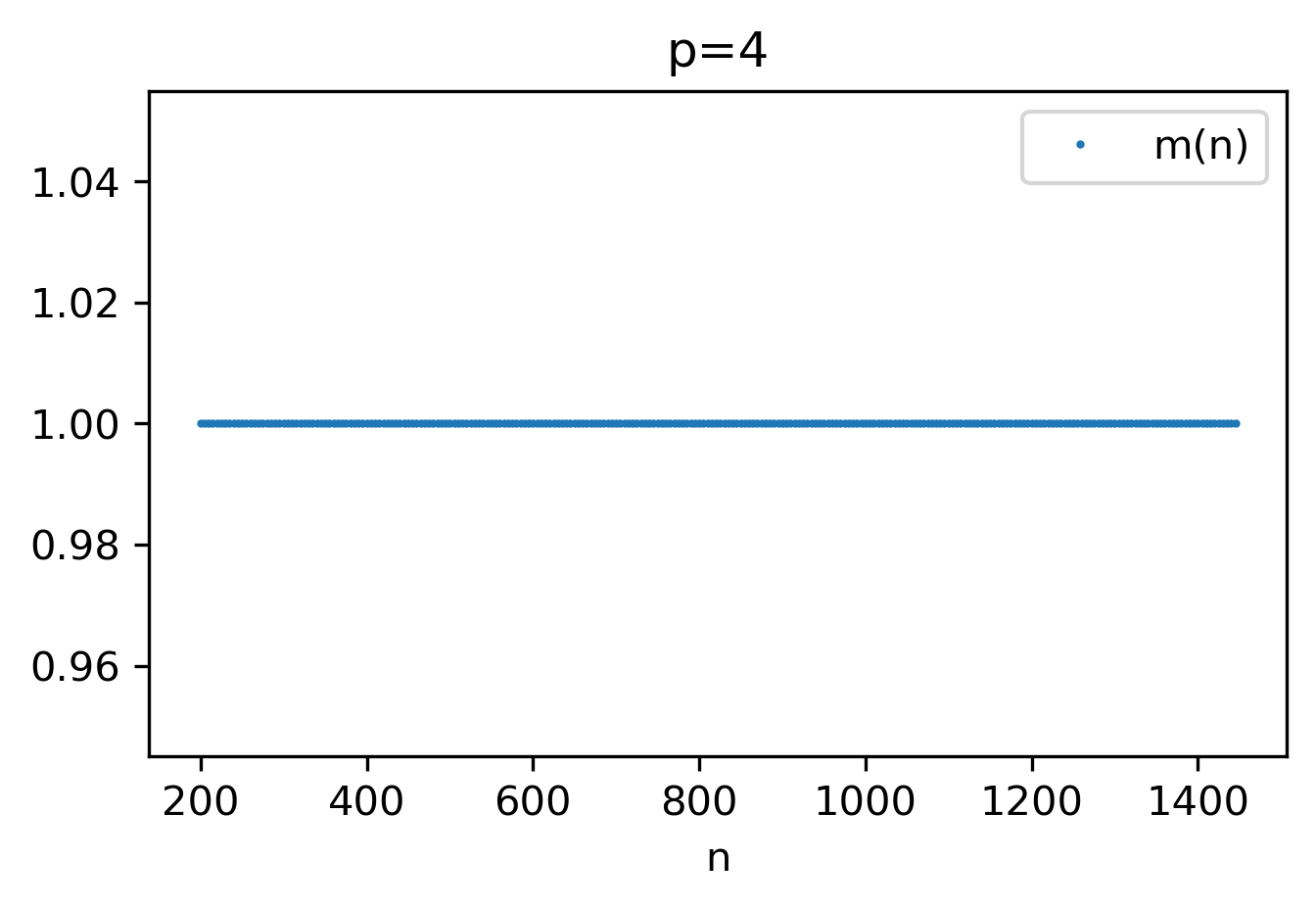}
\caption{The sequence $(m(n))_{n \geq 1}$}
\end{subfigure}%
\begin{subfigure}{.5\textwidth}
\centering
\includegraphics[width=\linewidth]{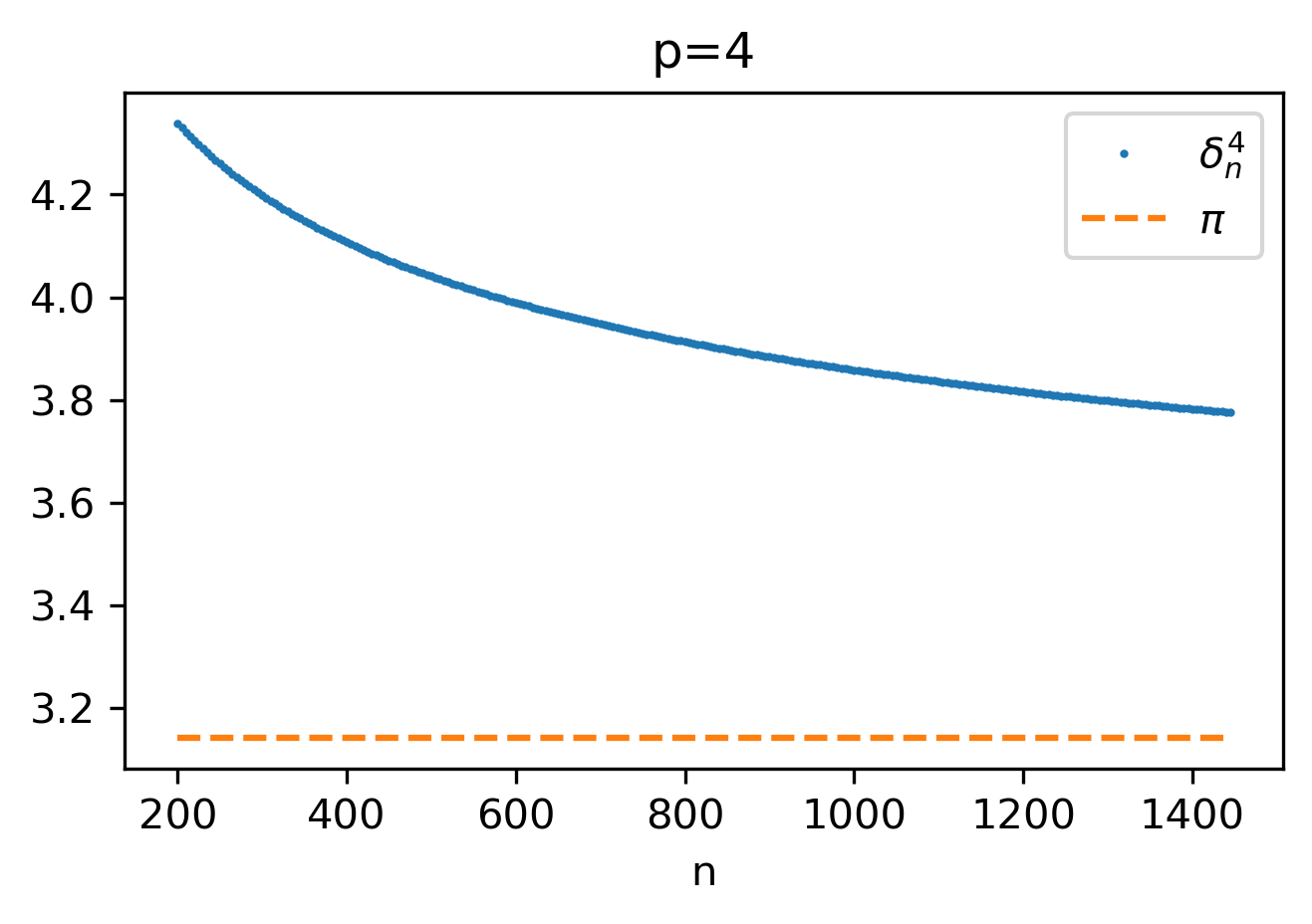}
\caption{The gap $\delta_n^p$}
\end{subfigure}
\caption{The graph of $(m(n))_{n \geq 1}$ (left) and the gap in function of $n$ (right). Parameter values $p=4$, and $\phi=\Phi_4$ with $\theta = 0.01$}
\label{f17-f18}
\end{figure}
\begin{figure}[H]
\centering
\begin{subfigure}{.5\textwidth}
\centering
\includegraphics[width=\linewidth]{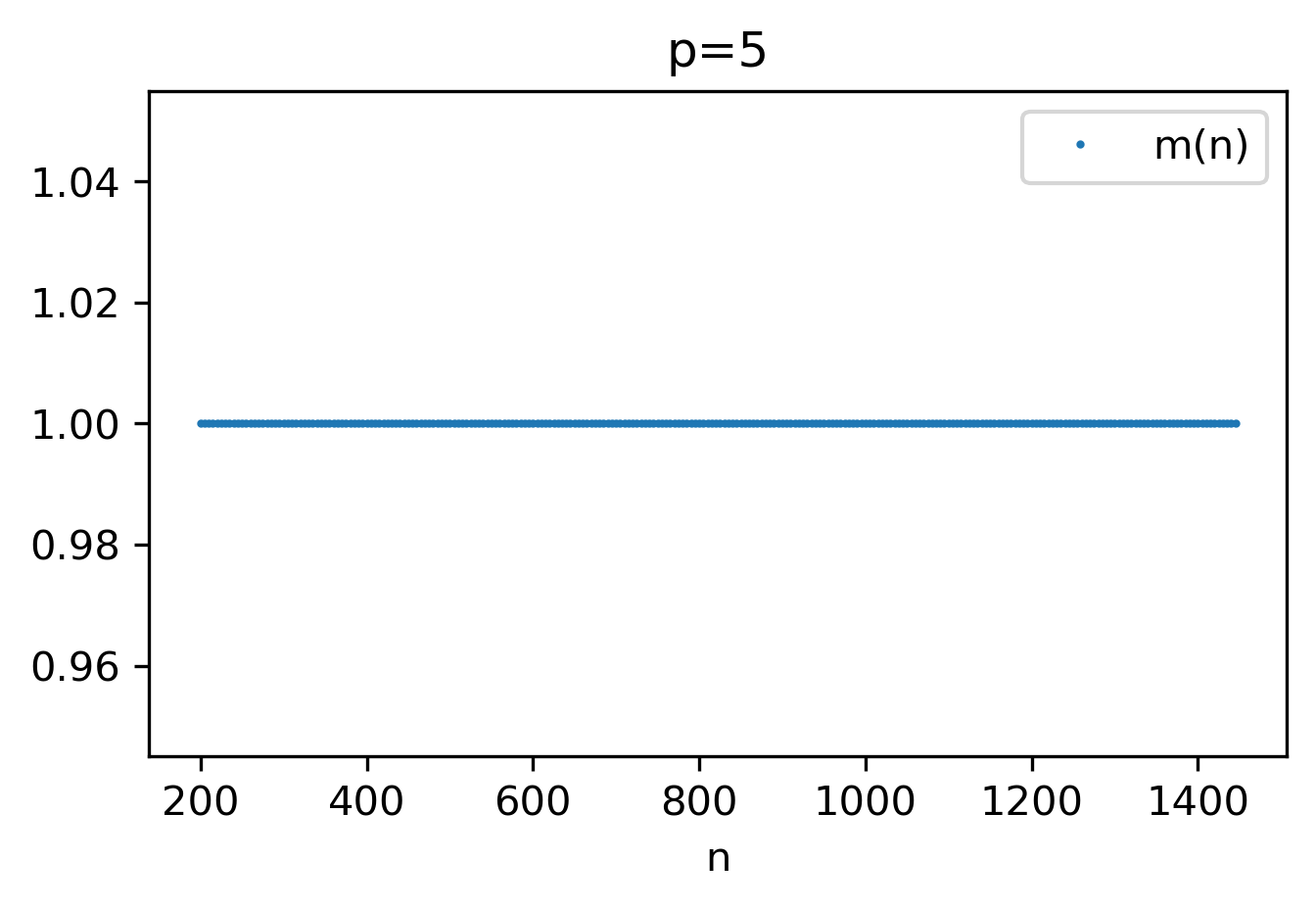}
\caption{The sequence $(m(n))_{n \geq 1}$}
\end{subfigure}%
\begin{subfigure}{.5\textwidth}
\centering
\includegraphics[width=\linewidth]{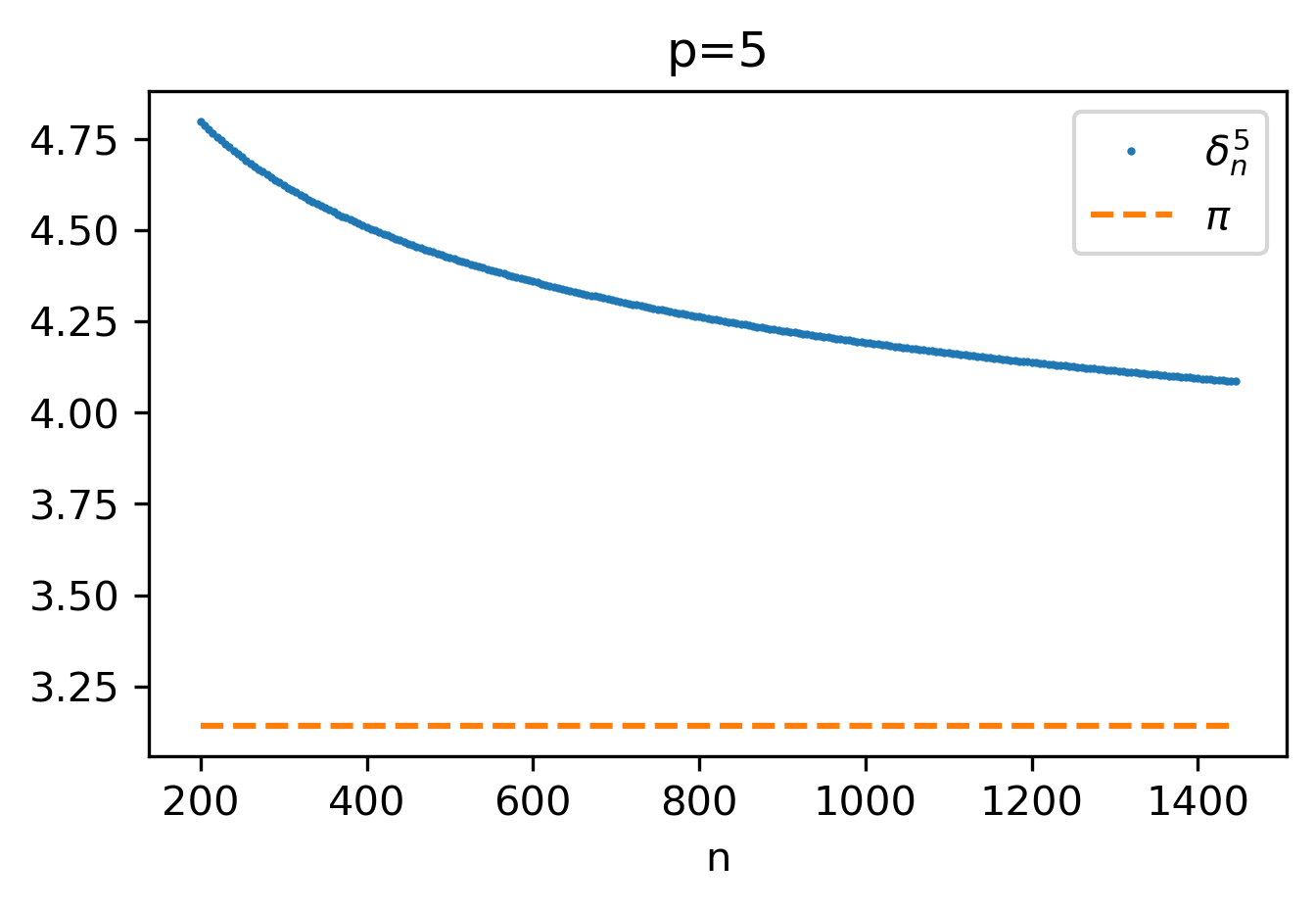}
\caption{The gap $\delta_n^p$}
\end{subfigure}
\caption{The graph of $(m(n))_{n \geq 1}$ (left) and the gap in function of $n$ (right). Parameter values $p=5$, and $\phi=\Phi_5$ with $\theta = 0.01$}
\label{f19-f20}
\end{figure}
\begin{figure}[H]
\centering
\begin{subfigure}{.5\textwidth}
\centering
\includegraphics[width=\linewidth]{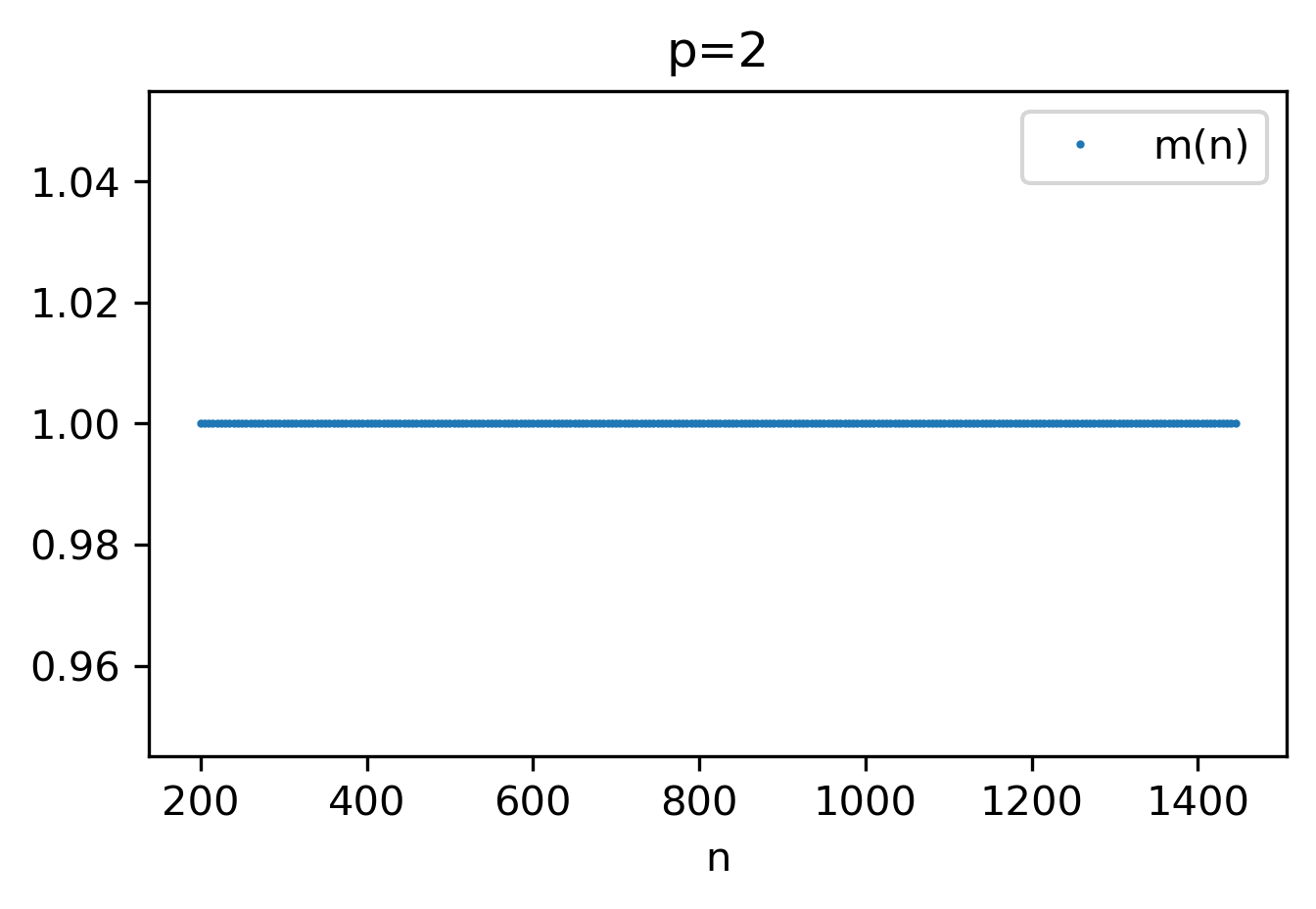}
\caption{The sequence $(m(n))_{n \geq 1}$}
\end{subfigure}%
\begin{subfigure}{.5\textwidth}
\centering
\includegraphics[width=\linewidth]{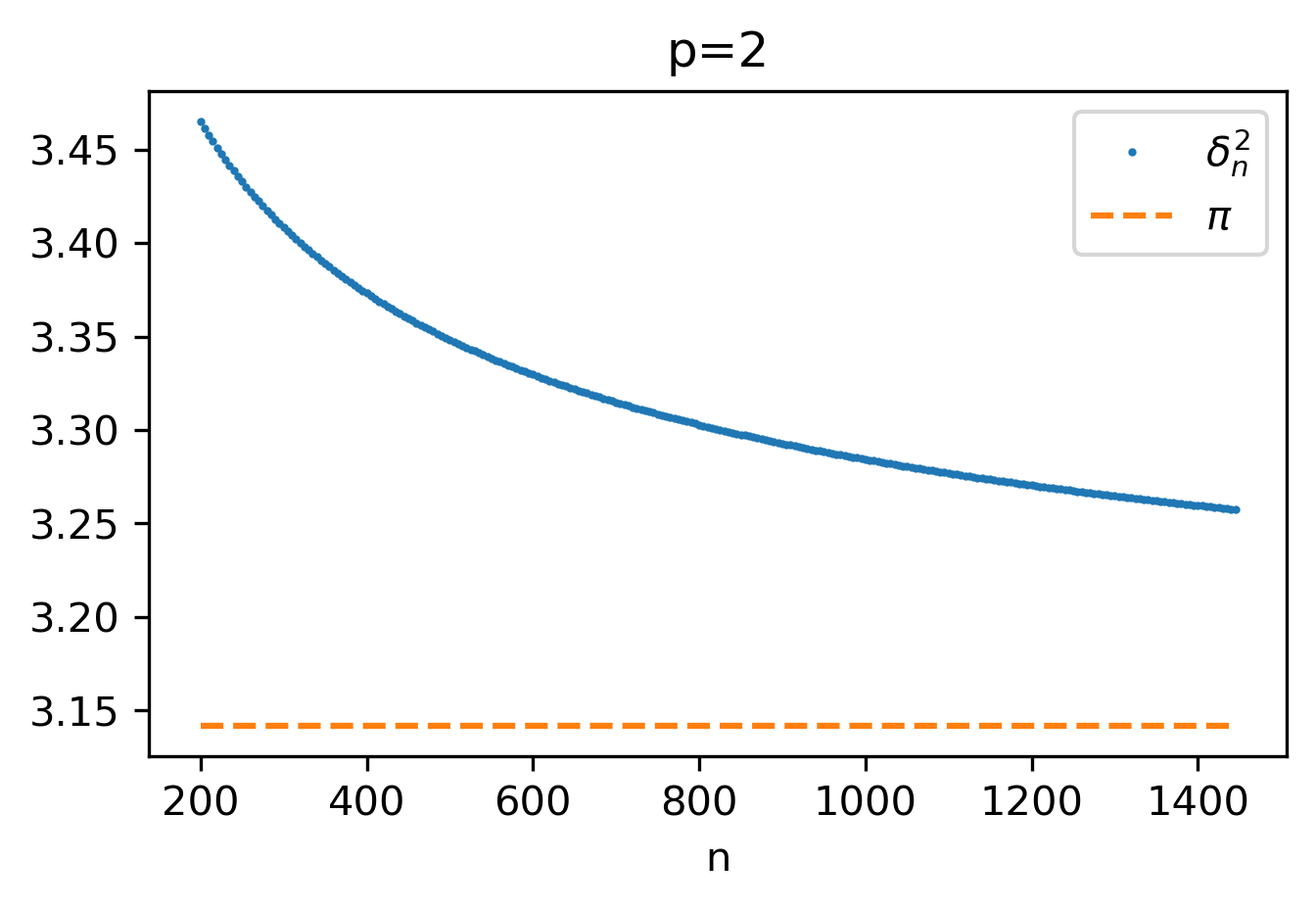}
\caption{The gap $\delta_n^p$}
\end{subfigure}
\caption{The graph of $(m(n))_{n \geq 1}$ (left) and the gap in function of $n$ (right). Parameter values $p=2$, and $\phi=\Phi_2$ with $\theta = 0.01$}
\label{f21-f22}
\end{figure} \vspace*{0.25cm}
\noindent  {\bf \em Test 9: study of the impact of $\theta$ on the behavior of the gap:} \vspace*{0.25cm} 

\noindent  \hspace{0.5cm} In the last test, we used small values of $\theta$ primarily to facilitate a comparison with the case when $p=1$. This choice was motivated by the existence of a mathematical proof demonstrating uniform stability specifically in this scenario. It's important to highlight that the values of $\theta$ are directly connected to the observability time discussed in \cite{ervedoza2016numerical}. In the following analysis, we will investigate the impact of the value of $\theta$ on the behavior of the gap and the index $m(n)$ when we use the reparameterization $\Phi_p$.

 Based on Figure \ref{f23-f24}, it is apparent that as $\theta$ increases, the convergence of the gap to $\pi$ occurs rapidly. Through various numerical tests, we have observed that, in general, the values of $\theta$ do not significantly impact the behavior of the gap, and we find that $m(n)=1$ for all $p\in \{1,\dots,27\}$.
\begin{figure}[H]
\centering
\begin{subfigure}{.5\textwidth}
\centering
\includegraphics[width=\linewidth]{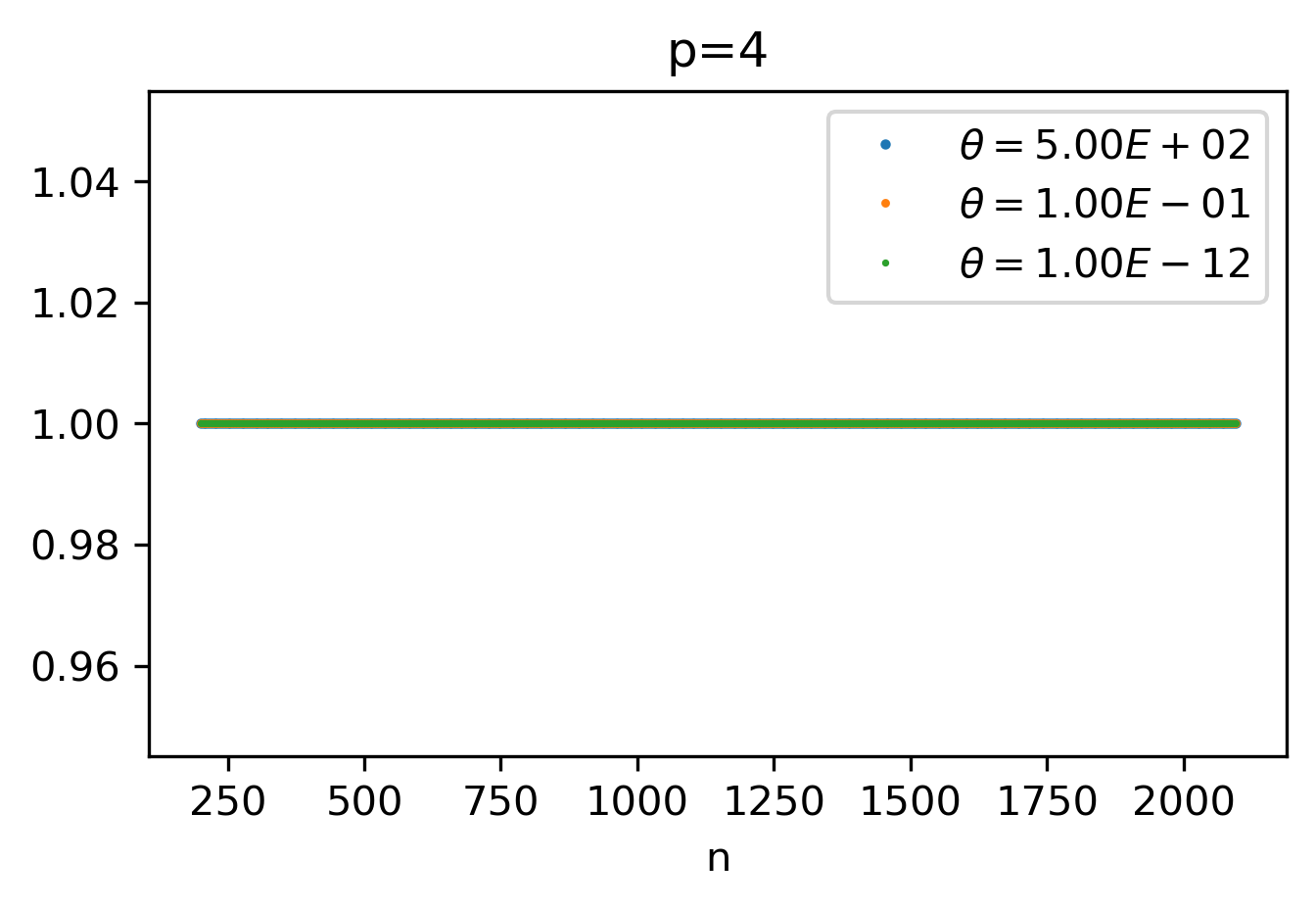}
\caption{The sequence $(m(n))_{n \geq 1}$}
\end{subfigure}%
\begin{subfigure}{.5\textwidth}
\centering
\includegraphics[width=\linewidth]{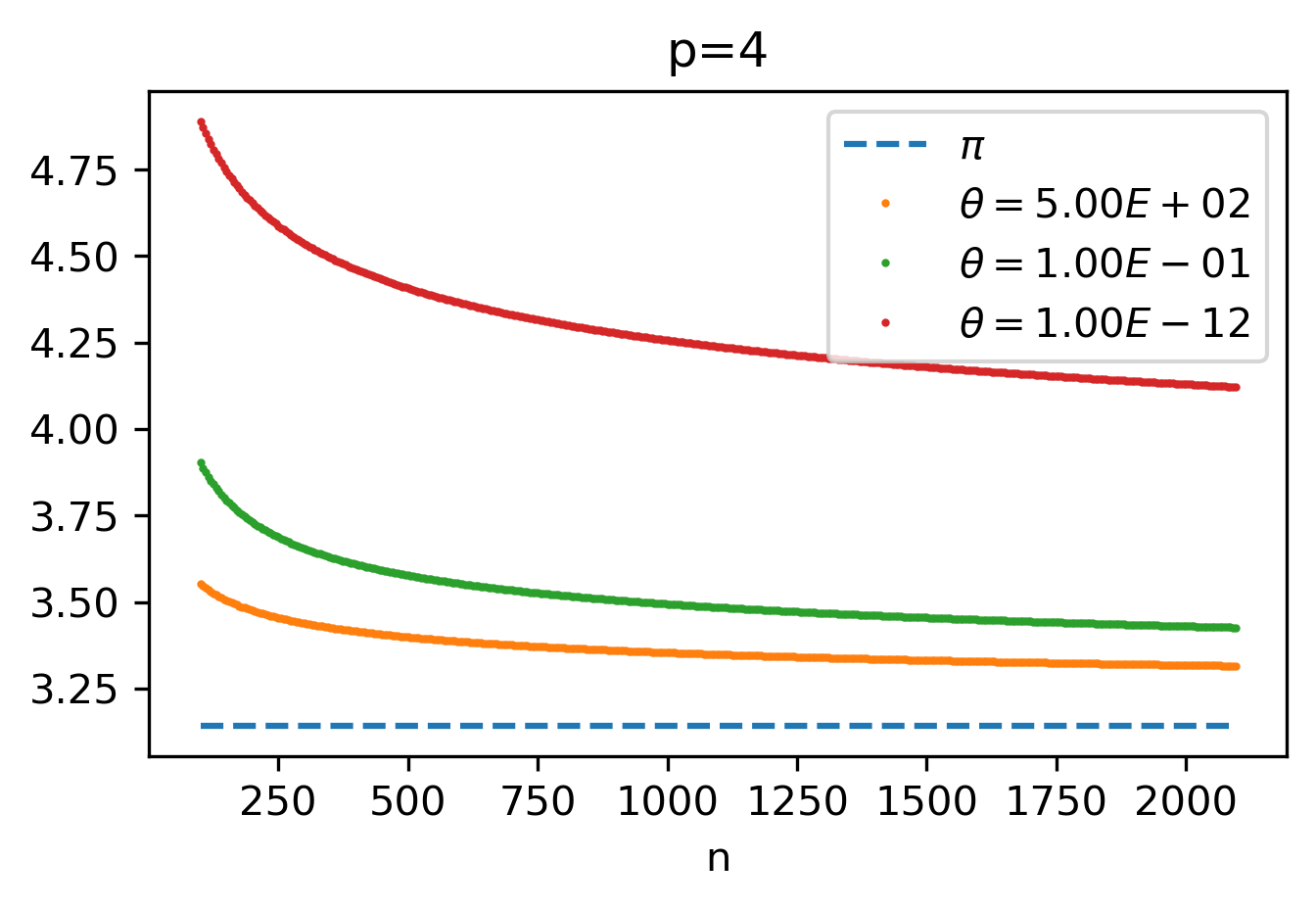}
\caption{The gap $\delta_n^p$}
\end{subfigure}
\caption{The graph of $(m(n))_{n \geq 1}$ (left) and the gap in function of $n$ (right). Parameter values $p=4$, and $\phi=\Phi_4$}
\label{f23-f24}
\end{figure}
\subsection{Summary of Numerical Tests} 
 \hspace{0.5cm} At the end of this section, we will summarize the numerical results to establish a framework for addressing mathematical questions. A comprehensive summary of all the conducted tests can be found in Table \ref{tab:summary-numerical-tests}.

In particular, in the case when $p\geq 2$ we have demonstrated numerically that there exists a significant set of  concave and convex reparametrizations for which we lose the gap condition for $p\geq 3$. For $p=2$, we achieve the gap with both convex and concave  reparametrization, but in general, not the optimal one except when we chose $\phi_3$ with  specific  values of $\theta$. 

After conducting numerous numerical tests, we observed that the reparametrization $\phi_3$ gives the gap for  some values of $\theta$ for all $p\in\{2,\dots,27\}$. However, it's essential to note that this gap is generally not optimal, and the index $m(n)$ is not bounded. Therefore, we set out to modify the function $\phi_3$  to construct  a reparametrization that  yields  the optimal gap.  We introduce the reparametrization $\Phi_p$ for this purpose. From our previous tests, it is evident that with this choice, we achieve the optimal gap condition, and the index at which the gap reaches its infimum is exactly $m_0 = 1$, mirroring the behavior observed in the case $p = 1$ with $\phi_3$ and $\theta=0.01$. In fact, the behavior of the gap and the index is consistent with the case $p=1$ when we chose $\phi_3$ as a reparametrization, for all $p\in\{2,\dots,27\}$  and for a large set of $\theta$, especially for $\theta\leq 0.01$. Furthermore, it is clear from the figures  that as $p$ increases, the rate of convergence of $\delta^p_n$ towards $\pi$ slows down.\\ 

In the paper  \cite{ervedoza2016numerical}, the reparametrization  $\phi_3$ is employed to establish uniform boundary observability of the wave equation for  $T>2\theta+2$, specifically for the case when $p=1$. However, it's worth noting that in our  analysis, we  observe that the optimal condition for uniform gap exists not only with $\phi_3$ or the other type of strictly concave reparametrization mentioned in \cite{ervedoza2016numerical}, but rather for a large set of concave or convex reparametrizations $\phi $ in $\mathbf{C}_{[0,1]}$. The remarkable aspect is the ability to use this reparametrization to achieve the optimal gap condition for all values of $p$ in $\{2,\dots,27\}$,  through a straightforward modification.

In the last numerical study, we focused on the effect of the $\theta $ on the behavior of the gap and the index $m(n)$, we showed numerically that the speed of the convergence of the gap depends on the values of $\theta$, as showed in the Figures  \ref{f23-f24}. Moreover, the behavior of the gap and $(m(n))$ is akin to the case  $p=1$ for both small values of $\theta$ and a wide range of larger values of $\theta$.
\begin{table}[H]
\centering 
\begin{tabular}{c|c|c|l|}
Test   & Objective                                                                                                                                                                           & Figures                                                                  & \multicolumn{1}{c|}{Observations}                                                                                                                                                                                                                                                                     \\ \hline
Test 1 & \begin{tabular}[c]{@{}c@{}}Examination of gap conditions \\ for linear $B$-splines\end{tabular}                                                                          & Tab. \ref{t1}, fig. \ref{f2}           & \begin{tabular}[c]{@{}l@{}}{\bf -} $(m(n))_{n \geq 1}$ is bounded ($m(n)=1$)\\ {\bf -} Optimal gap conditions observed\end{tabular}                                                                                                                          \\ \hline
Test 2 & \begin{tabular}[c]{@{}c@{}}Analysis of eigenvalue distribution  \\ for linear $B$-splines\end{tabular}                                                              & fig. \ref{fgap}                                                                     & \begin{tabular}[c]{@{}l@{}}The distance between square root eigenvalues \\ increases or decreases, in accordance with\\ Corollary \ref{cl2.5}.\end{tabular}                                                                                                               \\ \hline
Test 3 & \begin{tabular}[c]{@{}c@{}}Convergence study of the \\ symbol for linear $B$-splines\end{tabular}                                                                     & fig. \ref{f3}                                           & \begin{tabular}[c]{@{}l@{}}Convergence of discrete eigenvalues \\ towards the symbol.\end{tabular}                                                                                                                                                                                              \\ \hline
Test 4 & \begin{tabular}[c]{@{}c@{}}Investigation of gap conditions \\ for quadratic $B$-splines \\ using concave reparametrization $\phi_1$\end{tabular}                    & fig. \ref{f4-f5}                                        & \begin{tabular}[c]{@{}l@{}}{\bf -} $(m(n))_{n \geq 1}$ is not bounded, but equals $1$ a.e\\ {\bf -} Uniform gap conditions observed, \\ but not optimal\end{tabular}                                                                               \\ \hline
Test 5 & \begin{tabular}[c]{@{}c@{}}Analysis of gap conditions \\ for quadratic $B$-splines \\ using convex reparametrization $\phi_2$\end{tabular}                 & fig. \ref{f6-f7}                                        & \begin{tabular}[c]{@{}l@{}}{\bf -} $(m(n))_{n \geq 1}$ follows a linear pattern\\ {\bf -} Uniform gap conditions observed, \\ but not optimal\end{tabular}                                                                                                   \\ \hline
Test 6 & \begin{tabular}[c]{@{}c@{}}Study of optimal gap conditions \\ for quadratic $B$-splines \\ using concave reparametrization $\phi_3$\end{tabular} & fig. \ref{f8-f11}                                       & \begin{tabular}[c]{@{}l@{}}Similar behavior as when $p=1$:\\ {\bf -} $(m(n))_{n \geq 1}$ is bounded\\ {\bf -} Optimal gap conditions observed\end{tabular}                                                                                          \\ \hline
Test 7 & \begin{tabular}[c]{@{}c@{}}Examination of gap conditions \\ when $p \geq 3$\end{tabular}                                                                                & figs.  \ref{f12-f13-}--\ref{f16-f17-}, & \begin{tabular}[c]{@{}l@{}}{\bf -} The reparametrizations $\phi_1$ and $\phi_2$ yield \\ similar behavior  of $m(n)$ as when $p=2$ with $\phi_2$\\ {\bf -}We lose the gap condition in the case \\of $\phi_1$ and $\phi_2$\\ {\bf -} Different behavior from that of the case  $p=2$ \\  is observed  when we use  $\phi_3$\end{tabular} \\ \hline
Test 8 & \begin{tabular}[c]{@{}c@{}}Investigation of optimal gap \\ conditions when $p \geq 3$ \\ using the concave reparametrization $\Phi_p$\end{tabular}                     & figs. \ref{f15-f16}--\ref{f21-f22}     & \begin{tabular}[c]{@{}l@{}}Gap behavior is identical to that of \\ the case $p=1$ with $\phi_3$\end{tabular}                                                                                                                                                                                                    \\ \hline
Test 9 & \begin{tabular}[c]{@{}c@{}}Analysis of the impact of $\theta$\\  on gap behavior\end{tabular}                                                                               & fig. \ref{f23-f24}                                      & \begin{tabular}[c]{@{}l@{}}As $\theta$ increases, the convergence of the gap \\ toward the optimal value $\pi$ occurs rapidly.\end{tabular}                                                                                                                                                           \\ \hline
\end{tabular}
\caption{Summary of numerical tests}
\label{tab:summary-numerical-tests}
\end{table}

\section{Conclusions and  further comments}\label{sec:conclusions}
$\hspace{0.5cm}$We have shown that GLT theory can provide complex and novel information about
the distribution of eigenvalues and the impact of reparametrization on the spectrum. By applying our results, we have established a strong connection between GLT theory and control theory. In the numerical  section, we have demonstrated that the average gap condition is not equivalent to the uniform gap property. Furthermore, we have provided evidence that it is indeed possible to construct reparametrizations within the set  $\mathbf{C}_{[0,1]}$  that fulfill the boundedness condition for the index $m(n)$, enabling us to apply the Theorem \ref{cl2.4}.\\

The analysis presented in this study is related to the IgA discretization of the Laplace operator, utilizing regular B-splines. Throughout, our investigation of  eigenvalues distribution and gap behavior  reveals that  our analysis is independent of the specific discretization method and the particular operator under consideration. What remains crucial is the matrix's symbol resulting from a numerical discretization of any given operator. Consequently, we can extend the same study to finite difference discretization and higher-order Lagrangian finite element approximation.\\

The main challenge in this work revolves around establishing the boundedness of the sequence $(m(n))$. Unfortunately, our symbol analysis cannot provide insight into this condition,  as it depends on the choice of reparametrization. As illustrated in  our numerical analysis, achieving the boundedness of $(m(n))$  requires more than just ensuring convexity in the reparametrization. Rather, it necessitates a deeper understanding of how the chosen reparametrization impacts the distribution of eigenvalues, which means a control on the eigenvalues from the set $\mathbf{C}_{[0,1]}$. The only results in this context are Theorems \ref{tdis1} and \ref{dis1}, along with the Theorem \ref{dis2}. These theorems show that ordering  reparametrizations can affect the order of the  eigenvalues, but to establish the boundedness of $(m(n))$,  it is imperative to order the associated gaps.\\

Lastly, it is worth noting that all the findings presented in this paper, which encompass the improvement of the results in \cite{bianchi2021analysis}, along with our analysis of  eigenvalues distribution, can be extended to domains of higher dimensions (see \cite{serra2006glt}, \cite{barbarino2020block}).

\bibliographystyle{abbrv}
\bibliography{control-glt}

\end{document}